\newcommand{\XX}{\mathbf{a}} %place holder for name
\newcommand{\bT}{\mathbb T}
\newcommand{\bS}{\mathbb S}
\newcommand{\CC}{\mathbb C}
\newcommand{\NN}{\mathbb N}
\newcommand{\RR}{\mathbb R}
\newcommand{\del}{\partial}
\newcommand{\delbar}{\overline{\del}}
\newcommand{\phg}{{\mathrm{phg}}}
\newcommand{\Diff}{\mathrm{Diff}}
\newcommand{\ran}{\mbox{ran\,}}
\newcommand{\calA}{{\mathcal A}}
\newcommand{\calC}{{\mathcal C}}
\newcommand{\calE}{{\mathcal E}}
\newcommand{\calF}{{\mathcal F}}
\newcommand{\calH}{\mathcal H}
\newcommand{\calJ}{{\mathcal J}}
\newcommand{\calK}{{\mathcal K}}
\newcommand{\calL}{{\mathcal L}}
\newcommand{\calO}{{\mathcal O}}
\newcommand{\calP}{{\mathcal P}}
\newcommand{\calS}{{\mathcal S}}
\newcommand{\calU}{{\mathcal U}}
\newcommand{\calV}{{\mathcal V}}
\newcommand{\frakp}{\mathfrak p}
\newcommand{\bfa}{\mathbf{a}}
\newcommand{\ulom}{\underline{\omega}}
\newcommand{\uleta}{\underline{\eta}}
\newcommand{\ulJ}{\underline{J}}
\newcommand{\ulzeta}{\underline{\zeta}}
\newcommand{\ula}{\underline{a}}
\newcommand{\ulu}{\underline{u}}
\newcommand{\ulv}{\underline{v}}
\newcommand{\ulw}{\underline{w}}
\newcommand{\diag}{\mathrm{diag}} 
\newcommand{\fdiag}{\mathrm{fdiag}} 
\newcommand{\ff}{\mathrm{ff}} 
\newcommand{\GH}{\mathrm{GH}} 
\newcommand{\WH}{\mathrm{WH}} 
\newcommand{\IH}{\mathrm{IH}} 
\newcommand{\model}{\mathrm{model}} 
\newcommand{\gGH}{g_{\mathrm{GH}}} 
\newcommand{\redu}{\mathrm{red}} 
\newcommand{\LapGH}{\Delta_{g_{\GH}}} 
\newcommand{\TY}{\mathrm{TY}} 
\newcommand{\gTY}{g_{\TY}} 
\newcommand{\LapTY}{\Delta_{g_{\TY}}} 
\newcommand{\CA}{\mathrm{C}}
\newcommand{\gbfa}{g_{\bfa}} 
\newcommand{\Lapbfa}{\Delta_{\bfa}}
\newtheorem{theorem}{Theorem}
\newtheorem{proposition}{Proposition}
\newtheorem{corollary}{Corollary}
\newtheorem{lemma}{Lemma}
\theoremstyle{definition}
\newtheorem{definition}{Definition}
\theoremstyle{remark}
\newtheorem*{remark}{Remark}
\title{Tian--Yau metrics: \\
Fredholm theory, Hodge cohomology and moduli spaces} 
\author{Rafe Mazzeo \\ Stanford University \and Xuwen Zhu \\ Northeastern University}
\date{}
\begin{document}

\maketitle

\begin{abstract}
We study the natural geometric elliptic operators on a class of complete Riemannian manifolds which include
the 4-dimensional ALH* gravitational instantons and their higher dimensional Calabi-Yau analogues asymptotic to the model
Calabi Ansatz metrics. Some of these were initially constructed by Tian and Yau \cite{TY}, later by Hein \cite{Hein} and most
recently by Y.\ Chen \cite{YifanChen}, and we call these Tian-Yau spaces. They have played an important role in the degeneration 
theory of K3 metrics, cf.\ \cite{hsvz}, \cite{SunZhang2}, 
in the increasingly refined classification of gravitational instantons \cite{CJL}, \cite{LeeLin}, and other areas.  We show that these
elliptic operators can be analyzed using the $\bfa$-pseudodifferential calculus of Grieser and Hunsicker \cite{GH, GH2}, and
use this to determine the space of $L^2$ harmonic forms in the four-dimensional setting, as well as the refined asymptotic regularity
and  local deformation theory of ALH* structures. 
\end{abstract}

\section{Introduction}
Gravitational instantons are complete, noncompact hyperK\"ahler manifolds of dimension four with curvature tensor in $L^2$.  
They are fundamental examples of metrics with special holonomy, and as such, play an important role in several areas 
of mathematics and physics.   There are six types of gravitational instantons, going by the monikers ALE, ALF, ALG*, 
ALG, ALH* and ALH.  Through work of G.\ Chen and X.\ Chen \cite{ChenChen1, ChenChen2, ChenChen3}, and more 
recently Sun-Zhang \cite{SunZhang}, these constitute a complete classification of the possible asymptotic geometries
for this class. Each of these spaces is a singular torus fibration over an asymptotically conic (AC) base. The different 
types above correspond to different fiber and base dimensions and geometries. The fibers on ALE spaces are trivial,
so the spaces themselves are asymptotically conical. On ALF, ALG and ALH spaces, the fibers are tori of dimension $k = 1, 2$ and $3$, 
respectively, and the bases are AC spaces of dimension $4-k$. For each of these, the torus fibers (as Riemannian manifolds)
converge at infinity to some fixed flat torus of the same dimension. These four `regular types' are distinguished by 
having curvature decay like $r^{-2-\epsilon}$ for some $\epsilon > 0$, and for volume growth $r^{4-k}$, where
$r$ is distance from a fixed point in $M$. There are two exceptional cases: ALG* spaces are singular $T^2$ fibrations 
over a two-dimensional AC base, but where the conformal modulus of the torus fibers degenerates at infinity, while
ALH* spaces are singular nilmanifold fibrations over the half-line. ALG* spaces have curvature decay $1/r^2 \log r$ and 
volume growth of order $r^2$, while ALH* spaces have curvature decay $r^{-2}$ and volume growth of order $r^{4/3}$. 
There is a further classification within each asymptotic type, including a topological type determined by the middle
degree integer homology and intersection form, and more refined continuous moduli.

Fixing the asymptotic and topological type, the multi-dimensional moduli are understood only in certain cases. For example, 
Kronheimer \cite{Kr1, Kr2} famously classified ALE spaces of type ADE by realizing them as hyperK\"ahler reductions of finite 
dimensional affine spaces. Boalch's `modularity conjecture' speculates that all gravitational instantons are realized as 
gauge-theoretic moduli spaces (hence as hyperK\"ahler reductions of infinite dimensional affine spaces of connections and Higgs fields),
cf.\ the work of Cherkis and Kapustin \cite{ChK1, ChK2, ChK3}, Cherkis \cite{Cherkis},  Tripathy and Zimet \cite{TZ}, and more recently \cite{FMSW}.  
Gravitational instantons arise as rescaled pointed limit spaces in the degeneration theory of K3 surfaces and as bubbles in 
complex structure degeneration of Calabi-Yau manifolds, see \cite{SunZhang2}, \cite{hsvz} and also \cite{BG}. 
They also arise as mirrors in SYZ duality~\cite{CJL, CJL2}. 

Beyond this four-dimensional setting, there are various known families of higher dimensional Calabi-Yau metrics with asymptotics analogous 
to those for ALH* metrics and modelled on the higher dimensional Calabi Ansatz, see ~\cite{YifanChen}.  We mention also the metrics 
constructed in \cite{CollinsLi, CTY}, where the divisor at infinity is no longer smooth but has normal crossings.

With this preamble, we now explain the main goals of the present paper. We develop analytic tools for studying the natural elliptic operators on ALH* spaces and
their higher dimensional analogues. While some of this is already known, \cite{hsvz, SunZhang3, Carron}, the `uniform' approach here provides, 
for example,  Fredholmness of all these operators, both scalar and systems, between weighted Sobolev and H\"older spaces, as well as the associated 
regularity theory and mapping properties.  To these ends, we use the methods of microlocal analysis, specifically the calculus of $\bfa$-pseudodifferential
operators developed by Grieser and Hunsicker \cite{GH, GH2}, to construct parametrices for these elliptic differential operators. 
We then consider two key applications. The first is to identify the space of $L^2$ harmonic differential forms, also known as the Hodge
cohomology; this generalizes \cite{HHM}.  The second is to study the local deformation theory of ALH* spaces using Donaldson's 
method of hyperK\"ahler triples \cite{Donaldson}, see also \cite{FLS}. The key new issue involves the hyperK\"ahler 
deformations arising from variations of the geometric structure at infinity. 

The title of this paper is inspired by the fact that the first key examples of the types of metrics considered here were constructed
initially by Tian and Yau \cite{TY} in the late 1980's, and for this reason, we refer to the entire class of such metrics as Tian-Yau.

In the next section we review two different representations of their asymptotic models, the first a version of the Gibbons-Hawking ansatz
which is specific to the four-dimensional setting, and the second a model which holds in higher dimensions as well, called the Calabi Ansatz.  
We then review the key features of the Grieser-Hunsicker `\textbf{a}-calculus' of pseudodifferential operators and corresponding
elliptic parametrix construction. The rest of the paper uses these parametrices to establish Fredholm and regularity results, and
then goes on to study the Hodge cohomology and local deformation theory of ALH* spaces. 

\section{Asymptotic geometry of Tian-Yau metrics}
We now recall two useful models of Tian-Yau asymptotic geometry. The first, known as the Gibbons-Hawking Ansatz, 
specializes a remarkable representation formula for four-dimensional hyperK\"ahler metrics with triholomorphic $\bS^1$ symmetry; the
second, the Calabi Ansatz, is equivalent to Gibbons-Hawking in four dimensions, but also models certain asymptotic types of
Calabi-Yau metrics in higher (complex) dimensions. This material is gleaned from \cite{hsvz} and \cite{CJL}, cf.\ also \cite{Hein} and \cite{TY},
and we refer to these sources for more details.

\medskip

\noindent{\bf The Gibbons-Hawking Ansatz}.  This remarkable formula represents any $4$-dimensional hyperK\"ahler
metric with a triholomorphic circle action (i.e., an $\bS^1$ action which preserves each of the three complex structures). 
Let $\calU$ be an open set in $\RR^3$, or more generally, in any flat $3$-manifold with metric
$g_\calU$, and suppose that $M$ is a principal $\bS^1$ bundle over $\calU$.  Let $V$ be any positive harmonic function 
on $\calU$, and define the $1$-form $\alpha$ on $M$ by
\[
\star dV = d\alpha;
\]
the star and differential here are those on the flat $3$-manifold $\calU$. The harmonicity of $V$ is, of course, the integrability condition,
and $\alpha$ is well-defined if $\calU$ is simply connected. In terms of a linear coordinate $\theta$ on the $\bS^1$ fibers, then $d\theta + \alpha$ 
is a connection $1$-form on this principal bundle. Using these various data, we now write
\[
g = V g_{\calU} +  V^{-1} (d\theta + \alpha)^2.
\]
Such a metric is always hyperK\"ahler, in particular Ricci flat, and the obvious $\bS^1$ action rotating the fibers is triholomorphic.

Our interest is in the special case where $\calU$ is an open neighborhood in the flat half-cylinder $\bT^2 \times \RR^+_r$, where $\bT^2 = 
\RR^2/\Lambda$, and the circle bundle over $\calU$ restricts to each $\bT^2 \times \{r=\mbox{const.}\}$ as the flat $\bS^1$ bundle of degree 
$b$. In other words, this cross-section is the nilmanifold $Y = Y_b$, which is the total space of this circle bundle. 
Using Euclidean coordinates $(y_1, y_2)$ on $\RR^2$, and setting $w = y_2 + i y_1$, we assume that the lattice $\Lambda = \langle 1, \tau \rangle$,
where $\tau$ lies in the standard fundamental domain of the modular group.  With $r > C \geq 0$ as the linear coordinate on the
$\RR^+$ factor, then we take $V(y,r) = \beta \, r$ for any $\beta >0$ as a simple choice of positive harmonic function.  With this choice,
$dV = \beta\, dr$ and $\star dV = \beta \, dy_1 \wedge dy_2$.  However, $\star dV = d\alpha$ is also the curvature of the line bundle, which
indicates that we should take $\beta = 2 \pi b/ \mathrm{Im}\, \tau$.   Then write $\alpha = \beta \, y_1 dy_2$; this is not a valid expression
globally on $Y$, but we can regard it either as a local expression or as making sense globally on the universal cover $\widetilde{Y} = \mathrm{Nil}_3$.
On the other hand, $d\theta + \alpha$ is well-defined as the pushforward to $Y$ of a left-invariant $1$-form on $\mathrm{Nil}_3$,  cf.\ \cite{hsvz}.
For our purposes it suffices to use this local expression, which yields the metric 
\begin{equation}
\begin{aligned}
g_{GH}  & :=  \beta\, r (dr^{2}+|dy|^2) + (\beta \, r)^{-1} \alpha^2 \\
& = \beta\, r (dr^2 + dy_2^2 + dy_2^2)  + (\beta \, r)^{-1}  (d\theta+ \beta\, y_1dy_2)^{2}.
\end{aligned}
\label{GH}
\end{equation}

To expand on this, following \cite[pp.\ 58-61]{CJL} and using the notation there, this choice of $\alpha$ corresponds
to a choice of character $a:\Lambda \to \mathbb{S}^1$ which satisfies $a(\tau) = 1$, and a corresponding choice of Hermitian metric $h = e^{-\phi}$
with $\phi =\frac{\beta}{2} (y_{1}^{2}+y_{2}^{2})$.  We note that this metric $h$ is well-defined with respect to the Abel-Jacobi map which defines the line bundle 
on $\mathbb{T}^{2}$. To get the form for $\alpha$ above, we modify the coordinate on $\mathbb{S}^{1}$ fiber to be $\theta=\psi - \frac{\beta}{2} y_{1}y_{2}$.
(We refer to equation ~\eqref{e:CJLCoordinate} below for details.) 

This Gibbons-Hawking metric is the asymptotic model for the geometry of an end of an ALH* space in the following sense: if $g_{TY}$ is an ALH* 
Ricci-flat metric obtained by solving the Monge-Ampere equation on some quasiprojective smooth variety, then up to a diffeomorphism, $g_{TY}$ is 
asymptotic to this model metric up to an exponential error. In terms of the associated K\"ahler forms, 
$$
\nabla^{k}(\omega_{\TY}-\phi^{*}\omega_{\GH})=\calO(e^{-\delta r}),\ k = 0, 1, 2, \ldots, \ \ r \to \infty
$$
for some $\delta > 0$, see \cite[Prop 3.4(c)]{hsvz}.  

\medskip

\noindent{\bf The Calabi ansatz} 
There is an alternate model for the metrics on these ends due to Calabi. Unlike the Gibbons-Hawking formula, this Calabi ansatz
makes sense in any complex dimension $n \geq 2$, see~\cite{hsvz, SunZhang3}.   Fix a compact K\"ahler manifold $D$ of dimension
$n-1$ with trivial canonical bundle and $L$ an ample line bundle of degree $b$ over $D$. There exists a unique Ricci-flat metric $g_D$ on $D$, with K\"ahler form
satisfying $\omega_D^{n-1} = \frac{1}{2}i^{(n-1)^2} \Omega_D \wedge \overline{\Omega}_D$, where $\Omega_D$ is a holomorphic $(n-1)$-form
on $D$ satisfying $\int \frac12 i^{(n-1)^2} \Omega_D \wedge \overline{\Omega}_D = (2\pi c_1(L))^{n-1}$.  Let $h$ be the unique
Hermitian metric on $L$ with curvature form $-i \omega_D$, and consider the subset of $L$ consisting of all elements of length less
than $1$ with respect to $h$.  Then the Calabi ansatz is the K\"ahler form 
\[
\omega_\CA = \frac{n}{n+1} i \del \overline{\del} (- \log \|\xi\|_h^2)^{\frac{n+1}{n}}.
\]
This is Ricci flat, and in complex dimension $2$ yields an alternate asymptotic model for an ALH* Tian-Yau metric, up to errors decaying exponentially in $r = \|\xi\|_h$.

Using a radial coordinate $r$ on the fibers of $L$ and the connection $1$-form $\Theta$, we calculate that
\[
g_\CA=r^{\frac{2}{n}}(dr^{2}+g_{D}) + r^{\frac{2}{n}-2}\Theta^{2}.
\]
To match the methods here, we compactify this end by adding a boundary at $r = \infty$, or equivalently, by setting $x = 1/r$ and regarding $x=0$ as a 
boundary hypersurface, so that
\begin{equation}
g_\CA = x^{-\frac{2}{n}-4} dx^{2} + x^{-2/n} g_{D} + x^{2-2/n} \Theta^{2}=x^{-\frac{2}{n}+2} (x^{-6}dx^{2} + x^{-2} g_{D} + \Theta^{2}).
\label{C11}
\end{equation}
Setting $n=2$ again, this becomes
\begin{equation}
g_\CA = r (dr^2 + g_D) + r^{-1} \Theta^2 = \frac{dx^2}{x^3} + \frac{g_D}{x} + x \Theta^2.
\label{C1}
\end{equation}

Recent work of Y.\ Chen~\cite{YifanChen} considers existence and uniqueness of Ricci-flat metrics asymptotic to this Calabi Ansatz in higher dimensions. 

All of the analytic techniques used to obtain the Fredholm and mapping property results in this paper carry over to this higher dimensional 
setting as well, but for the sake of this presentation, we restrict most of the discussion to the complex two-dimensional case.  In particular, 
the two main applications of the analytic techniques here, the Hodge and deformation theorems, are proved only for four-dimensional
ALH* spaces.  The Hodge theory section generalizes readily to higher dimensions, but is computationally much more involved then.
However, there is no fundamental difficulty in generalizing the results here to higher dimensions, cf.\ \cite{HHM}. The deformation theory 
uses many ideas specific to this low dimension, so is likely to be more difficult (and less explicit) in higher dimensions. There is one 
higher dimensional problem taken up in this paper though: in Section 6 we prove an asymptotic regularity theorem for Ricci flat 
merics which are `weakly Calabi' (in the parlance of \cite{YifanChen}, but which is trivial when $n=2$).

\begin{remark}
In this entire paper, we consider metrics $g_{\TY}$ with this asymptotic Calabi (or Gibbons-Hawking) structure, and for simplicity, call
these Tian-Yau metrics. 
%of Tian-Yau type. By definition this is a metric asymptotic to the model Gibbons-Hawking metric
%up to some lower order error terms:  $g_{\TY} = g_{\GH} + h$, where $|h|_{\GH} = \calO( x^\mu)$ for some $\mu > 0$, along with corresponding decay for
%its covariant derivatives (at least up to some specified order).  As explained earlier, in certain cases it is reasonable to assume the much stronger condition
%that this error term decays exponentially, like $e^{-\mu/x}$. 
\end{remark}

\section{Fundamentals of geometry and analysis on $\bfa$-manifolds} %category and its Fredholm theory}
We now describe the geometric framework used here to analyze the operators associated to Tian-Yau and Calabi Ansatz metrics. 
Namely, we consider these metrics as special cases of the broader class of so-called $\bfa$-metrics, as defined by Grieser and Hunsicker
\cite{GH2}.  This general class, which includes and generalizes the metrics on $\mathbb Q$-rank $1$ ends of locally symmetric
spaces, is defined on the interior of any compact manifold with boundary $M$, where $\del M := Y_2$ is the total space of a double-fibration 
\begin{equation*}
\begin{tikzcd}
F \arrow[r, hookrightarrow] & Y_2 \arrow{d}{\pi_2}  \\ & Y_1
\end{tikzcd}
\qquad \mbox{where}\qquad
\begin{tikzcd}
F_1 \arrow[r, hookrightarrow] & Y_1 \arrow{d}{\pi_1} \\ & Y_0.
\end{tikzcd}
\end{equation*}
%\begin{equation*}
%\begin{aligned}
%F \hookrightarrow & B_2 \\ %\overset{\pi_2}{\longrightarrow} 
%& \downarrow \\ &B_1 %, \ \ \mbox{where} \ \  F_1 \hookrightarrow B_1 \overset{\pi_1}{\longrightarrow} B_0.
%\end{aligned}
%\end{equation*}
Using a defining function $x$ for $\del M$, an $\bfa$-metric is then one which is asymptotic (up to varying degrees of decay, as described 
below) to a metric of the form 
\begin{equation*}
g = \frac{dx^2}{x^{2(1 + a_1 + a_2)}} + \frac{ (\pi_1 \circ \pi_2)^* h_{Y_0}}{ x^{2(a_1 + a_2)}}  + \frac{ \pi_2^* h_{Y_1}}{x^{2a_2}} +  h_{F}.
\end{equation*}
Here $h_{Y_1}$ is a symmetric two-tensor which is required to restrict to a metric on each fiber $F_1$. The moniker $\bfa$ refers to the pair  $(a_1, a_2)$.  

%In our setting, there is only a single fibration, i.e., $B_2 = B_1 = \bT^2$, with fiber $F = \bS^1$, 
%and with `weights' $a_1 = a_2 = 1$.  

In the following, we restrict attention to a compact manifold with boundary $M$ (we usually write $M$ instead of $\overline{M}$) and assume that the
double fibration collapses to a simple fibration.  This is a special case of the double-fibration setup, where $Y_0 = \{\mathrm{pt.}\}$, $(F_{1}=)Y_1 = Y$ is
the base, $Y_2 = \del M$ and $F_2 = F$, with `weight' parameters are now $a_1 = a_2 = 1$. The point is that the metric $h_{Y_0}$ is now trivial, so the second term 
in $g$ is absent, and $h_{Y_1}$ is a nondegenerate metric on $Y$; thus our model metric takes the form
\begin{equation}
g = \frac{dx^2}{x^6} + \frac{\pi^* h_Y}{x^2} + h_F.
\label{gfamod1}
\end{equation}
We call this a simple $\bfa$-metric. Certain constructions below are carried out with this notation, but to simplify exposition in many specific calculations, we 
focus even further on the specific 
case where $Y = \bT^2$ and $F = \bS^1$.  

\subsection{The fundamental objects}
We now discuss some of the basic `flora and fauna' associated to a simple $\bfa$-structure on a manifold with boundary $M$.

The first order of business is to explain why Tian-Yau and Calabi Ansatz metric fit into this framework. In other words, we first
examine the degeneracy structure of an ALH* Laplacian.    Let us start with the local coordinate expression
\[
g = r(dr^{2}+dy_{1}^{2}+dy_{2}^{2}) + r^{-1}(d\theta+y_{1}dy_{2})^{2},
\]
which is a metric $(1,\infty) \times \bT^3 = (1,\infty)_r \times \bT^2_y \times \bS^1_\theta$. Here $(y_1, y_2)$ are Euclidean coordinates on $\bT^2$
and $y_1 dy_2$ is a local representation of the $1$-form $\alpha$.  This metric is of course not globally defined on $\bT^3$, but we are using
it here simply to illustrate the form of the degeneration.  Replacing $r$ by $x = 1/r$, this becomes
\begin{equation}
\gGH := \frac{dx^2}{x^5} + \frac{dy_1^2 + dy_2^2}{x} + x (d\theta + y_1 dy_2)^2,
\label{GHmetric}
\end{equation}
as previously recorded in \eqref{GH}; this has associated Laplace-Beltrami operator
\begin{equation}
\begin{split}
\Delta_{\gGH} &=r^{-1}\partial_{r}^{2} + r^{-1}(\partial_{y_{1}}^{2}+\partial_{y_{2}}^{2}) -2r^{-1} y_{1} \partial_{y_{2}}\partial_{\theta}  +(r+r^{-1}y_{1}^{2})\partial_{\theta}^{2} \\ 
& = x^{5}\partial_{x}^{2} + 2x^{4}\partial_{x}+ x (\partial_{y_{1}}^{2}+\partial_{y_{2}}^{2}) -  2x y_{1} \partial_{y_{2}}\partial_{\theta} + (x^{-1}+xy_{1}^{2})\partial_{\theta}^{2}. 
\end{split}
\label{e:delta}
\end{equation}
Multiplying by $x^{-1}$ leads to the conformally related $\bfa$-metric
\begin{equation}
g_{\bfa} = \frac{dx^2}{x^6} + \frac{dy_1^2 + dy_2^2}{x^2} + (d\theta + y_1 dy_2)^2.
\label{a-metric}
\end{equation}
We are not interested here in $\Delta_{g_{\bfa}}$, but in the structurally similar operator 
\begin{equation}
\begin{split}
x\LapGH  & =\left((x^{3}\partial_{x})^{2}+(x\partial_{y_{1}})^{2} + (x\partial_{y_{2}})^{2}+\partial_{\theta}^{2} \right) \\ 
& \qquad \qquad + \left(-x^{5}\partial_{x} -2x^{2}y_{1} \partial_{y_{2}}\partial_{\theta} + x^{2}y_{1}^{2}\partial_{\theta}^{2} \right). 
\end{split}
\label{delta2}
\end{equation}
The point is that it is more convenient to work with operators whose coefficients are smooth in this coordinate system. It suffices to study -- and
eventually construct a parametrix for -- $x \LapGH$ since this translates immediately to similar results for $\LapGH$ itself.  The key point is that $x\LapGH$ and $\Delta_{g_{\bfa}}$ are both finite sums of products of vector fields in the space
\[
\calV_{\XX} = \mathrm{span}_{\calC^\infty} \{ x^{3}\partial_{x}, x\partial_{y_{1}}, x\partial_{y_{2}}, \partial_{\theta}\}
\]
of `structure vector fields' for the $\bfa$-calculus.  Thus an arbitrary element of $\calV_{\XX}$ is one of the form
\[
V = a(x,y,\theta) x^3\del_x + b_1(x,y,\theta)x \del_{y_1} + b_2(x,y,\theta) x\del_{y_2} + c(x,y,\theta)\del_\theta,
\]
where $a, b_1, b_2$ and $c$ are all smooth up to $x=0$. This is a Lie algebra under the usual bracket which encodes the 
precise degeneracy structure of this whole theory. 

These $\bfa$-structures can also be defined invariantly.  As above, let $M$ be a compact manifold with boundary, with $\del M$ fibering over $Y$.
The space of $b$-vector fields $\calV_b(M)$ consists of all smooth vector fields on $M$ which are unconstrained in the interior, but which are tangent 
to $\del M$. If $x \in \calC^\infty(M)$ is any boundary defining function for $\del M$, then equivalently, a smooth vector field $V$ lies in $\calV_b$ 
if and only if $Vx = xF$ for some $F \in \calC^\infty(M)$, or as we typically write, $Vx = \calO(x)$.  An $\XX$ structure provides a refinement of $\calV_b$ 
which imposes different orders of vanishing of these vector fields according to their tangency to the fibers of $\del M$, but which also requires an
extension of the fibration of $\del M$ to the interior to some order. We phrase this in terms of a choice of an equivalence class of defining functions.  
Namely, fixing one defining function $x$, we set
\[
\calV_{\XX}(M) = \{ V \in \calV_b(M): \, \text{$V$ tangent to the fibers and}\ Vx = \calO(x^3)\}.
\]
This space is unchanged if we replace $x$ by another boundary defining function $\bar{x} = x + \calO(x^3)$; in other
words, this definition of $\calV_{\XX}$ is well-defined only in terms of a fixed $2$-jet of a defining function along the boundary. 
(Alternately, we could also define $\calV_{\XX}$ by defining an equivalence relation on $\calV_b(M)$.)  In any case, fixing a defining function $x$ (or
at least its equivalence class), the model $\bfa$-metric $g_{\bfa}$ in \eqref{a-metric} has the property that $g_{\bfa}(V,W)$ is smooth up to $x=0$
for any two vector fields $V, W \in \calV_{\XX}$. More generally, we can consider any metric $g$ on (the interior of) $M$ for which $g(V,W)$ is
smooth whenever $V, W \in \calV_{\XX}$. It is easy to see that $g$ must be a smooth combination of the basic (`singular') $2$-tensors
\[
\frac{dx^2}{x^6},\ \frac{dx \, dy_i}{x^4},\ \frac{dx\, d\theta}{x^3}, \frac{dy_i dy_j}{x^2},\ \frac{dy_i \, d\theta}{x},\ \mbox{and}\ d\theta^2.
\]

\begin{remark}
We are primarily interested in metrics which are asymptotic to the model form above, at least to higher order.  Indeed, we already noted
that general ALH* metrics are asymptotic to Gibbons-Hawking metrics up to terms which vanish like $e^{-\delta/x}$ for some $\delta > 0$.
Similarly, any Calabi-Ansatz metric (or rather, $x^{-1}$ times a Calabi Ansatz metric) is asymptotic to some model metric $g_{\bfa}$ up
to some higher polynomial order.  This becomes relevant when we study asymptotics of solutions, e.g., to equations like
$\Delta_{g_{\bfa}} u = 0$; such $\bfa$-harmonic functions then decompose asymptotically into (globally defined) components which
decay at very different rates.  This decoupling into different components is impossible to define unless the metric is asymptotic to
a model metric to sufficiently high order.  We discuss all of this in various places below.
\end{remark}

There are various other objects naturally associated to an $\bfa$-structure. These include the $\bfa$-tangent bundle ${}^{\bfa}TM$,
which has fiber at the point $p \in M$
\[
{}^{\bfa}T_pM = \calV_{\bfa}/ \mathcal I_p \calV_{\bfa},
\]
where $\mathcal I_p$ is the ideal of smooth functions on $M$ vanishing at $p$. If $p \in \mathrm{int}\, M$, then ${}^{\bfa}T_p M$ is canonically
identified with $T_p M$, but if $p \in \del M$, then we can realize this fiber as follows: expanding any $V \in \calV_{\bfa}$ as $V = a x^3\del_x + 
b_1 x\del_{y_1} + b_2 x \del_{y_2} + c \del_\theta$, where $a,b_j, c \in \calC^\infty(M)$, then the residue class of $V$ in the quotient above 
is identified with the $4$-tuple $(a(p), b_1(p), b_2(p), c(p))$. In other words, $x^3\del_x, x\del_{y_1}, x \del_{y_2}$ and $\del_\theta$ constitutes a 
smooth local basis of sections of ${}^{\bfa} TM$ near $p$. Similarly, the dual bundle, the $\bfa$-cotangent bundle ${}^{\bfa}T^*M$ is spanned locally 
by the smooth basis of sections
\[
\frac{dx}{x^{3}}, \ \frac{dy_{1}}{x}, \frac{dy_{2}}{x}, \ d\theta.
\]
A general $\XX$-metric on $M$ is a smooth positive-definite section of $\mathrm{Sym}^2({}^{\bfa}T^*M)$.

An $\bfa$-differential operator $L$ is one which can be written as a locally finite sum of elements of $\calV_{\XX}$:
\[
L = \sum_{|J| \leq m}  V_{j_1} \ldots V_{j_m}.
\]
The space of all such operators is denoted $\Diff^*_{\bfa}(M)$. An operator of this type is $\bfa$-elliptic if it is an `elliptic combination' of 
a spanning basis of sections for $\calV_{\bfa}$.  In our $4$-dimensional setting, a second order operators $L$ is $\bfa$-elliptic if it is a positive 
definite quadratic form in the vector fields $x^3\del_x, x\del_{y_1}, x\del_{y_2}, \del_\theta$.  There is an invariant way to define the $\bfa$-symbol 
${}^{\bfa}\sigma_m(L)$ of any operator $L \in \Diff^m_{\bfa}(M)$; this is a smooth function on ${}^{\bfa}T^*M$ which is a homogeneous polynomial 
of order $m$ on each fiber.  

As noted earlier, these $4$-dimensional statements extend immediately to general dimensions. 

\subsection{Function spaces}\label{ss:space}
There are various function spaces which will be useful below. Some of these are ones on which $\bfa$-operators act most naturally, while
others are needed to formulate optimal regularity statements for solutions to $\bfa$-elliptic equations.   We define the spaces below only
for scalar functions or distributions, but it is clear that all of the definitions below extend immediately to sections of arbitrary bundles over $M$
which are smooth up to $\del M$. 

\medskip

\noindent {\bf $\bfa$-Sobolev spaces} Fix any $\bfa$-metric $g$ on $M$, with associated volume form $dV_g$.  Now define, for any $k \in \mathbb N$, 
the $\bfa$-Sobolev space of order $m$:
\[
H^k_{\bfa}(M) = \{u: V_1 \ldots V_\ell u \in L^2(M, dV_g)\ \ \forall \ V_{1}, \ldots, V_\ell \in \calV_{\bfa}\ \mbox{and}\ \forall\ \ell \leq k\}.
\]
We can then define $H^{-k}_{\bfa}(M)$ by duality, and the spaces of all real orders $H^s_{\bfa}(M)$ by interpolation. If $\mu \in \RR$, we also define the weighted Sobolev spaces 
\[
x^\mu H^s_{\bfa}(M) = \{ u = x^\mu v: v \in H^s_{\bfa}(M) \}.
\]

It is immediate that if $L \in \Diff^m_{\bfa}(M)$, then
\[
L: x^\mu H^{s+m}_{\bfa}(M) \longrightarrow x^\mu H^s_{\bfa}(M)
\]
is bounded for any $s, \mu \in \RR$. 

\medskip

\noindent {\bf $\bfa$-H\"older spaces}
As above, fix any $\bfa$-metric $g$. We may then define the associated `geometric' H\"older spaces: if $B_1(p)$ is the unit $g$-ball around 
$p \in \mathrm{int}\, M$, then 
\[
\calC^{0,\alpha}_{\bfa}(M) = \{u \in \calC^0(\mathrm{int}\, M): \sup_{p \in \mathrm{int}\, M} \sup_{q_1, q_2 \in B_1(p)} 
\frac{ |u(q_1) - u(q_2)|}{\mathrm{dist}_g(q_1, q_2)^\alpha} < \infty \}. 
\]
There is also a local coordinate description which follows by noting that the sets
\[
\{(x,y,\theta):  |x-x_0| \leq \frac12 x_0^3,\ |y-y_0| \leq \frac12 x_0,\ |\theta - \theta_0| \leq \frac12 \}
\]
is comparable to $B_1( (x_0, y_0, \theta_0)$, uniformly in $x_0 > 0$.  The H\"older spaces of higher order are 
\[
\calC^{k,\alpha}_{\bfa}(M) = \{u \in \calC^{0,\alpha}_{\bfa}(M): V_1 \ldots V_\ell u \in \calC^{0,\alpha}_{\bfa}(M)\ \ \forall\ V_j \in \calV_{\XX}\ \mbox{and}\ \ell \leq k\}.
\]
Finally, 
\[
x^\nu \calC^{k,\alpha}_{\bfa}(M) = \{u = x^\nu v: v \in \calC^{k,\alpha}_{\bfa}(M) \}.
\]

Once again, it is clear that if $L \in \Diff^m_{\bfa}(M)$, then
\[
L: x^\nu \calC^{k+m,\alpha}_{\bfa}(M) \longrightarrow x^\nu \calC^{k,\alpha}_{\bfa}(M)
\]
for any $\nu \in \RR$ and $k \in \NN_0$. 

\medskip

\noindent {\bf Conormality and polyhomogeneity}   We now review the important classes of `completely smooth' functions on the manifold $M$.
In fact, we describe all this in a slightly more general setting. In the next section we shall define the class of $\bfa$-pseudodifferential operators in 
terms of the precise regularity of their Schwartz kernels. These kernels are distributions on a certain blowup of $M \times M$, as described in that 
section, which are ``fully smooth'' on this manifold with corners.  

The most obvious (but naive) spaces of fully smooth functions on $M$ are the intersections $\bigcap_s x^\mu H^s_{\bfa}(M)$ or 
$\bigcap_k x^\nu \calC^{k,\alpha}_{\bfa}(M)$. However, a moment's thought shows that while elements in either of these spaces are $\calC^\infty$ in the 
interior of $M$, there is very little control of their regularity at $\del M$.  Furthermore, these are defined on $M$, but not on any possible blowup of $M \times M$.

Instead, suppose that $X$ is an arbitrary compact manifold with corners up to codimension $k$. This means that near any point $p \in X$, there is
a local diffeomorphism from a neighborhood of $p$ to a neighborhood of $0$ in the positive orthant $(\RR^+)^\ell \times \RR^{n-\ell}$ for some
$\ell \leq k$ (we say then that $p$ lies on a corner of codimension $\ell$ if this diffeomorphism maps $p$ to $0$).  
We next define the space $\calV_b(X)$ of $b$-vector fields on $X$ to consist of all smooth vector fields on the closed manifold $X$ which are unconstrained
in the interior and which lie tangent to all boundaries and corners. Using local coordinates $(x_1, \ldots, x_\ell, y_1, \ldots, y_{n-\ell})$ adapted to the
orthant, $\calV_b$ is spanned over $\calC^\infty(X)$ by the local basis of sections $x_i \del_{x_j}$, $1 \leq i, j \leq \ell$, and $\del_{y_s}$, $s = 1, \ldots, n-\ell$. 

\begin{definition}  We say that a function $u$ lies in the space of conormal functions of order $\nu \in \RR$, $\calA^\nu(X)$, if
\[
| V_1 \ldots V_\ell u  | \leq C x_r^{\nu}\ \ \mbox{for any}\ V_i \in \calV_b(X)\ \ \mbox{and for all}\ \ell \in \NN_0.
\]
The constant $C$ obviously depends on $\nu$, the $V_i$ and on $u$, and this bound holds near each boundary face $H_r = \{x_r = 0\}$.  
We can, of course, choose different weights $\nu_r$ for each boundary face $H_r$. 
\end{definition}
Any conormal function is $\calC^\infty$ in the interior of $X$ and has full tangential regularity along each boundary face. However, the fact that
we are differentiating with respect to $b$-vector fields means that $\calA^\nu$ contains such functions as $x_i^\gamma$ for 
any $\gamma$ with $\mathrm{Re}\, \gamma \geq \nu$, as well as $x^{\gamma} (\log x)^p$ for $\mathrm{Re}\, \gamma > \nu$ and any $p\in \RR$. 

It is much more convenient to work with an even smaller class of fully smooth functions, namely those which are not only conormal, but which
admit asymptotic expansions at each boundary face.   An index set $E$ is a discrete subset $\{(z_j, k_j)\} \subset \CC \times \NN_0$ such
that $\mathrm{Re}\, z_j \to +\infty$ as $j \to \infty$ and such that any vertical slab $\mathrm{Re}\, z \in (A,B)$ contains at most finitely
many of the $z_j$.  
\begin{definition}
A function $u(x,y)$ on a manifold with boundary $X$, where $x$ is a boundary defining function for $\del X$, is called a polyhomogeneous function 
with index set $E$ if it is an element of some $\calA^\nu(X)$ such that 
\[
u \sim \sum_{(z_j, k_j) \in E} \sum_{\ell=0}^{k_j} a_{j\ell}(y) x^{z_j} (\log x)^\ell,
\]
where the coefficient functions $a_{j\ell}$ are $\calC^\infty$ on the boundary $\{x=0\}$.  This asymptotic expansion is meant in the classical sense, and
holds even when differentiated any number of times. We write $u \in \calA^{E}_{\phg}(X)$. 

Next, suppose that $\{H_i, i = 1, \ldots , s\}$ is an enumeration of the boundary hypersurfaces of a manifold with corners $X$. Fix, for each $i$, an
index set $E_i$, and write $\calE = (E_1, \ldots, E_s)$; this is called an index family for $X$. A function $u$ on $X$ is called polyhomogeneous with index 
family $\calE$ if it is conormal with some choice of weights and, near each $H_i$, $u$ admits an expansion with index set $E_i$, with product type
expansions at all corners.
\end{definition}

It is immediate that $x\LapGH$ and $\Delta_{g_{\bfa}}$ act on $\calA^\mu(M)$ for any $\mu$.  These operators also preserve polyhomogeneity,
but typically alter the index sets. 

We refer to \cite{Ma-edge1} for more careful descriptions of these spaces, as well as a number of basic facts and results about them. 

\section{Parametrices for $\bfa$-elliptic operators}
We now turn to the main analytic part of the paper. The goal here is to show that the natural elliptic operators associated to an 
ALH* metric fit into the existing framework of the Grieser-Hunsicker `$\mathbf{a}$-calculus' \cite{GH, GH2}.   The advantage
of this approach is that these methods are flexible, and apply to such operators coming from both the warped product
model metric and the ones asymptotic to these models. These methods also apply easily to systems such as Dirac-type
operators and Hodge Laplacians. In addition, parametrix methods provide an easy way to translate $L^2$-based mapping properties
and Fredholm theory to analogous results on weighted H\"older spaces. Finally, parametrices yield refined details about the 
asymptotic structure of solutions to these equations.  We note, of course, that more classical methods involving separation
of variables and perturbation arguments provide a simpler approach to the Fredholm theory for scalar equations, see \cite{SunZhang3}.

The parametrix methods explained below are part of the general program of geometric microlocal analysis. As already seen in \S 3, elliptic operators 
associated to ALH* metrics are, up to a factor of $x$, degenerate elliptic operators on the compactification the ALH* space $M$
obtained by adding the nilmanifold $Y$, which is a nontrivial circle bundle over $\bT^2$, as the boundary at infinity.  We have
shown that if $\Delta_g$ is the Laplace operator  for a metric $g$ of this type, then $x\Delta_g$ lies in the
space $\Diff^2_{\bfa}(M)$ of $\bfa$-operators of order $2$; moreover, this operator is $\bfa$-elliptic.  The main idea
is to define a `calculus' of $\bfa$-pseudodifferential operators $\Psi^*_{\bfa}(M)$, in which $\Diff^*_{\bfa}(M)$ lies as a distinguished
subalgebra. This calculus is sufficiently large to contain parametrices of elliptic elements of $\Diff^*_{\bfa}(M)$. The term `calculus'
is meant to indicate that $\Psi^*_{\bfa}$ is not quite an algebra; composition is not defined for certain pairs of elements. The
obstruction, however, is a simple one related to integrability, and is easy to check in practice.   These $\bfa$-pseudodifferential
operators are characterized by the regularity properties of their Schwartz kernels, which are defined to be polyhomogeneous
functions on a certain blowup (or resolution) $M^2_{\bfa}$ of the product $M^2$, along with an extra classical singularity
along the diagonal. Indeed, this diagonal singularity is the `pseudodifferential part'; its other properties correspond 
to `far field' asymptotic behaviour of these Schwartz kernels away from the diagonal.   The point of blowing up $M^2$
to $M^2_{\bfa}$ is that this provides the most transparent way to keep track of these various asymptotic regimes.

We shall be describing all of this in our particular setting, where $M$ is four-dimensional, with boundary $Y$ the total
space of the $\bS^1$ fibration over $\bT^2$, where $y = (y_1, y_2)$ are flat coordinates on the torus and $\theta$ is the 
standard coordinate on the circle. There is very little that is special to this case, and the following material can easily
be adapted to where $y$ and $\theta$ are multi-dimensional variables. We take this path to keep the exposition as concrete as possible. 

There are, in fact, two intermediate types of degeneracies, with pseudodifferential calculi, which we also need and recall below.
These are the $b$-calculus $\Psi^{*}_b$ and cusp (or $c$-) calculus $\Psi^*_c$.  We consider the operators
in these two calculi in our setting as acting on reduced spaces, namely $[0, \epsilon_0)_x$ and $[0,\epsilon_0)_x \times \bT^2_y$, respectively.
This comes about because we are letting them act on the finite dimensional spaces of functions (or forms, etc.) which
restrict to be harmonic on the $\bS^1$ fibers (for the $c$-calculus) or on the $\bS^1$ fibers and on $\bT^2$ (for the $b$-calculus).
This will be explained more carefully below. The structure vector fields in these two cases, and on these reduced spaces, are
\begin{equation*}
\begin{aligned}
\calV_b([0,\epsilon_0) & = \mathrm{span}_{\calC^\infty} \{ x \del_x \}, \ \  \mbox{and} \\ 
\calV_c([0,\epsilon_0) \times \bT^2) & = \mathrm{span}_{\calC^\infty} \{ x^2 \del_x, \del_{y_1}, \del_{y_2} \}.
\end{aligned}
\end{equation*}
The associated pseudodifferential calculi, $\Psi^*_b( [0, \epsilon_0))$ and $\Psi^*_c( [0,\epsilon_0) \times \bT^2)$, are defined by
requiring the Schwartz kernels of their elements to live in the $b$-double space $[0,\epsilon_0)^2_b$ and the $c$-double
space $( [0,\epsilon_0) \times \bT^2)^2_c$, respectively, with polyhomogeneous expansions on all boundary faces. 
These operators can in fact be regarded as acting on `partially harmonic' functions on $M$, and thus can be regarded
as having Schwartz kernels on double spaces $M^2_b$ and $M^2_c$.  The blowups for these will, as for $M^2_{\bfa}$, be reviewed in
the next subsection.  In any case, we are really recalling three separate calculi: $\Psi^*_{\bfa}(M)$, $\Psi^*_c(M)$ and $\Psi^*_b(M)$;
the eventual parametrix will be a sum of three operators, one in each of these spaces. 

An important feature of each of these calculi, as distinct from the classical case on compact manifolds, is that there is a hierarchy
of symbol mappings, one for the singularity along the diagonal and others corresponding to the leading asymptotic
coefficients of the Schwartz kernel on the `front faces' of $M^2_{\bfa}$, i.e., the boundary hypersurfaces which intersect
the diagonal. These `boundary symbols' are simply the leading asymptotic of the operator.  One can prove Fredholmness only if all of these
symbol mappings are invertible, not just the one for the diagonal singularity!  An operator for which all of these symbol mappings
are invertible is called {\it fully elliptic}. 

In this section we first construct the spaces $M^2_b$, $M^2_c$ and $M^2_{\bfa}$ (these are all closely related, so it is not really three
separate constructions) and then define the spaces of $b$-, $c$- and $\bfa$-pseudodifferential operators. 
At that point we can explain the steps in the parametrix construction for fully elliptic $\bfa$-differential operator, focusing specifically 
on $x \LapTY:= \calL_{\bfa}$ for simplicity, where $g$ is a Tian-Yau ALH* metric.   The entire construction below can be adapted easily
for closely related elliptic operators, for example $x^{1/2} D_{g_{\TY}}$, where $D_{g_{\TY}} = d + \delta$ is the Hodge-de Rham operator for an ALH* space. 

The entire scheme of these definitions and parametrix constructions has been carried out in many other settings and for many other types of degeneracy structures.
Each setting has its own peculiarities, but the overall structure of how to proceed is basically the same in each case. 

For expository purposes, we first treat the simpler case where the fibration of $Y$ over $\bT^2$ is trivial, which is simpler from a geometric
point of view, and then turn to explaining the modifications necessary to treat the general twisted case. 

\subsection{Blowups and pseudodifferential operators}
Consider a $4$-dimensional space $M$ endowed with an $\bfa$-structure and $\bfa$-metric $g$.  We work in a product neighborhood 
of the boundary $\calU = [0,\epsilon)_0 \times Y$ with coordinates as above, and as a very first step, consider the case where the 
fibration is trivial, so that $Y = \bT^{2}_{y}\times \bS^{1}_{\theta}$. Then, at leading order, 
\[
\gbfa=\frac{dx^{2}}{x^{6}} + \frac{g_{\bT^{2}}}{x^{2}} + g_{\bS^{1}},
\]
though more generally we assume only that $g$ is  asymptotic to this model form. The Laplacian is 
\begin{equation}\label{e:prdLap}
\Lapbfa= (x^{3}\partial_{x})^{2} + x^{2} (\partial_{y_{1}}^{2}+\partial_{y^{2}}^{2}) + \partial_{\theta}^{2}. 
\end{equation}
When the fibration is nontrivial, we recall the Gibbons-Hawking model, which locally in $\bT^2$ takes the form
\begin{equation}
\begin{split}
g_{GH} & = x^{-1}(\frac{dx^{2}}{x^{4}} + dy_{1}^{2}+dy_{2}^{2})  + x (d\theta + y_{1}dy_{2})^{2}, \\ 
& = x \left(\left(\frac{dx^{2}}{x^6}+  \frac{g_{\bT^2}}{x^2}  + g_{\bS^1}\right) + \left(y_{1}^{2} dy_{2}^{2} + 2 y_1 d\theta dy_2\right) \right),
\end{split}
\label{e:h}
\end{equation}
so $\gbfa = x^{-1}\gGH$ is of $\bfa$-type.  Unlike the product case, this model metric has lower order cross-terms. For later reference, 
we denote by $h$ the tensor in the final set of parentheses in \eqref{e:h}. The Laplace operator for this metric is 
\begin{equation}
\begin{aligned}
\LapGH & = x^{-1} ( ((x^3\del_x)^2-x^{5}\partial_{x} +  &  x^2 (\del_{y_1}^2 + \del_{y_{2}}^2)+\del_{\theta}^2)  \\
& \qquad +  ( -2x^{2}y_{1} \partial_{y_{2}}\partial_{\theta} + x^{2}y_{1}^{2}\partial_{\theta}^{2} ) )\\
& = x^{-1} \calL_{\bfa}.
\end{aligned}
\label{lagnf}
\end{equation}
The operator $\calL_{\bfa} = x \LapGH$ is an elliptic $\bfa$-operator.  Here it is convenient to use the generating set of vector fields
\begin{equation}\label{e:vector}
x^{3}\partial_{x}, x\partial_{y_{1}}, x(\partial_{y_{2}}-y_{1}\partial_{\theta}), \partial_{\theta}
\end{equation}
in place of $\{x^3\del_x, x\del_{y_1}, x\del_{y_2}, \del_\theta\}$. 

\bigskip

\noindent{\bf Blowups} 
% First recall that $X$ is a manifold with corners if, for any $q \in X$, there exists a local coordinate system $(x_1, \ldots, x_\ell, y_1, \ldots, y_m)$ centered
% at $q$ with each $x_j \in [0,\epsilon)$ and $y_i \in (-\epsilon, \epsilon)$; we then say that $p$ lies on a corner of codimension $\ell$, namely
% the intersection of the boundary hypersurfaces $H_j = \{x_j = 0\}$, $j = 1, \ldots, \ell$.  

Suppose that $X$ is a manifold with corners. A submanifold $Y \subset X$ is called a $p$-submanifold if there exists a coordinate system adapted
to the orthant structure (as in the previous section) in some neighborhood $\calU$ around each $q \in Y$ such that 
$\calU \cap Y = \{x_1 = \ldots = x_{\ell'} = y_1 = \ldots y_{m'} = 0\}$ for some $\ell' \leq \ell$ and $m' \leq m$.  Thus $X$ has a product
decomposition around $q$ of the form $(Y \cap \calU) \times [0,\epsilon)^{\ell-\ell'} \times (-\epsilon, \epsilon)^{m-m'}$. 

If $Y$ is a $p$-submanifold of $X$, we define the (normal) blowup $[X; Y]$ as the disjoint union of $X \setminus Y$ and $SN^+ Y$, the inward-pointing
spherical normal bundle of $Y$ (inward pointing is only relevant if $Y$ lies in a boundary of $X$), with the obvious topology and smooth structure 
generated by the lifts of all smooth functions on $X$ and
the spherical normal coordinates around $Y$.  This blowup has a new boundary face equal to the set of all spherical
normal vectors; this fibers over $Y$ with fibers some spherical orthant (of dimension equal to the codimension of $Y$ in $X$). 

Any vector field $V$ on $X$ lifts to a vector field on $[X;Y]$; this lift is nonsingular if and only if $V$ is tangent to $Y$. 
Blowup has the effect of making such tangent vector fields less degenerate. Our goal here is to make each of the vector fields 
$x^3\del_x, x \del_{y_1}, x\del_{y_2}$ and $\del_\theta$ nondegenerate.  (These are initially vector 
fields on $M$, but we regard them as vector fields on $M^2$ by letting them act on the `left', i.e., first factor of $M$.) 

%The coordinates
%$(x,y_1, y_2, \theta)$ are duplicated to obtain a full set of coordinates $(x,\tilde{x}, y_1, \tilde{y}_1, y_2, \tilde{y}_2, \theta, \tilde{\theta})$ on $M^2$. 

First define the $b$-double space
\begin{equation}
M^2_b = M \times_b M ;= [M^2;  (\del M)^2_o],
\label{bbup}
\end{equation}
where $(\del M)^2_o$ consists of the components of $(\del M)^2$ which are the double products of each of the connected 
components of $\del M$ (thus we omit products of different components of $\del M$).  The closure of the lift of the interior of the diagonal 
$\diag \subset M^2$ is denoted $\diag_b$.  The `front face' $\ff_b$ is the new (and possibly disconnected) boundary face created by this blowup. 

Next define the $c$-double space ($c$ stands for cusp) 
\begin{equation}
M^2_c = M\times_c M := [ M^2_b ; \ff_b \cap \diag_b].
\label{cuspbup}
\end{equation}
This has lifted diagonal $\diag_c$, a front face $\ff_c$ created by the new blowup, as well as (the lift of the) $b$-front face, which we might
call the `intermediate' front face of $M^2_c$. It plays a relatively unimportant role when working with fully elliptic operators, as we discuss below.
Finally, define the full $\bfa$-double space
\begin{equation}
M^2_{\bfa} = M \times_{\bfa} M := [M^2_c; \fdiag_c],
\label{bfabup}
\end{equation}
where $\fdiag_c$ is the lift (in the sense above) of the fiber diagonal of the boundary $\{x = \tilde{x} = 0, y_1 = \tilde{y}_1, y_2 = \tilde{y}_2\}$ in $M^2$. 
We use here a duplicated set of coordinates $(x, y, \theta, \tilde{x}, \tilde{y}, \tilde{\theta})$ on $M^2$. The lifted diagonal of $M^2_{\bfa}$ is 
denoted $\diag_{\bfa}$;  there is a front face $\ff_{\bfa}$, and two intermediate front faces which are the lifts
of the $b$- and $c$- front faces; as before, these play an insignificant role when working with fully elliptic operators. 

\bigskip

\noindent{\bf Lifts of vector fields}

We now compute the lifts of the basic $\bfa$-vector fields to these spaces.  This is most easily done in terms of projective coordinates for
each blowup.  Thus, a local coordinate system on $M^2_b$ is obtained by setting $s = x/\tilde{x}$, so that the full coordinate system
is $(s, y_1, y_2,\theta, \tilde{x}, \tilde{y}_1, \tilde{y}_2, \tilde{\theta})$. The front face $\ff_b$ is $\{\tilde{x} = 0\}$. 
These are valid coordinates away from the lift of the face where $\tilde{x} = 0$. 
In terms of these, the $\bfa$-vector fields are lifted to
\[
x^3\del_x = \tilde x^2 s^3 \del_s, \ \ x \del_{y_j} = \tilde{x} s \del_{y_j}, \ \ \del_\theta.
\]
The first of these is slightly less degenerate near $\ff_b \cap \diag_b$ because $s = 1$ there.  However, they still degenerate, so we proceed further.
To compute their lifts to $M^2_c$, set $s' = (s-1)/\tilde{x}$, keeping all the other coordinates the same, to get 
\[
\tilde{x}^2 s^3\del_s = \tilde{x} (1 + \tilde x s')^3 \del_{s'}, \ \ \tilde{x} s \del_{y_j} = \tilde{x} (1 + \tilde{x} s') \del_{y_j},  \ \ \del_\theta. 
\]
As before, the first of these is one step less degenerate near $\ff_c$, but now, the first three of these vector fields vanish at the same rate at $\ff_c$.

For the final lift to $M^2_{\bfa}$, we use projective coordinates $S = s'/\tilde{x}$, $Y_j = (y_j-\tilde{y}_j)/\tilde{x}$, which yields the lifts
\[
\tilde{x} (1 + \tilde{x} s')^3 \del_{s'} = (1 + \tilde{x}^2 S)^3 \del_S, \ \  (1 + \tilde{x}^2 S) \del_{Y_j}, \ \ \del_\theta.
\]
Thus at the newest front face $\ff_{\bfa}$, where $\tilde{x} = 0$, these lifts are the fully nondegenerate vector fields 
$\del_S$, $\del_Y$, $\del_\theta$.  Note that at each of these steps, on the respective front faces, $s$, $s'$ and $(S,Y)$ are global variables on 
$(0,\infty)$, $\RR$ and $\RR^3$, respectively, and not just local coordinates. 

\bigskip

\noindent {\bf Pseudodifferential operators}

Associated to each of the three degeneracy structures above is a calculus of pseudodifferential operators. The moniker `calculus' (rather than, e.g.\ algebra)
indicates two features. First, not every pair of elements can necessarily be composed; the obstruction is, however, a very simple one related to
whether certain integrals are convergent. Second, these spaces of operators come equipped with extra structure, including a hierarchy of symbol
maps, and the corresponding subcalculus of residual operators; these symbols are the key tools in constructing parametrices for fully elliptic elements.
We shall be quite brief in the description below because it is amply documented elsewhere: see \cite{Ma-edge1} or \cite{APS} for the case of $b$-operators,
and \cite{GH2} for $c$- and $\bfa$-operators.

We characterize the elements of $\Psi^*_{\sharp}(M)$, $\sharp = b,\, c$, or $\bfa$, by the geometric structure of their Schwartz kernels. 
If $A$ is an operator in either of these three classes, then its Schwartz kernel $K_A$ by definition lifts to $M^2_{\sharp}$ to have a particularly
simple geometric structure:  
\begin{itemize} 
\item[i)] first, it has a `classical' pseudodifferential singularity along the lifted diagonal $\diag_{\sharp}$ which is
uniform along this $p$-submanifold up to the front face, up to a factor of $\rho^q$ for some $q \in \RR$.  Said slightly differently,
we can localize $K_A$ by multiplying by a cutoff function $\chi \in \calC^\infty(M^2_{\sharp})$ which has support in a small neighborhood
of $\diag_{\sharp}$ and which vanishes at all boundary faces except $\ff_{\sharp}$. Then taking the Fourier transform of $\chi K_A$ 
transversely to $\diag_{\sharp}$, one obtains a classical symbol $a(z,\zeta)$, which is of the form $\rho^q \tilde{a}(z,\zeta)$, where $\tilde{a}$
is a standard symbol on the conormal bundle of $\diag_{\sharp}$ which is smooth up to the front face, and $\rho$ is a defining function
for $\ff_\sharp \cap \diag_\sharp$;
\item[ii)] second, $(1-\chi)K_A$ is polyhomogeneous on $M^2_\sharp$.  The exponents which appear in its expansions at each boundary
face are regulated by an index set, so the precise class of polyhomogeneous functions in which it is contained is determined by
an index family $\calE$, which is a union of index sets, one for each boundary face of the double space. The index set associated
to the front face $\ff_{\sharp}$ is usually one of the form $\{(q+j, 0), j \in \NN_0\}$, which matches the behavior of the symbol
along the lifted diagonal. 
\end{itemize}
Thus to record all of this information, one considers operators in the subclasses $\Psi^{m, \calE}_{\sharp}(M)$, where $m$ is the
order of pseudodifferential singularity along the diagonal and $\calE$ records the index sets for the polyhomogeneous expansions
at each face.

One of the main structural theorems is that composition of two such operators, $A \circ B$, is again an operator of the same type
(with diagonal singularities and index sets composing accordingly). The obstruction to composition is merely the issue of whether the
integral
\[
\int_M  K_A(z, z') K_B(z', z'')\, dz',
\]
converges, which depends on the sum of the rates of blowup or decay of $K_A$ and $K_B$ as $x' \to 0$. 

\subsection*{Parametrix construction}

We now describe the parametrix construction for a fully elliptic $\bfa$-differential operator. The key examples we have in mind 
are 
\[
\calL_{\bfa} = x\LapTY
\]
and $x^{1/2} D_{g_{\TY}}$ where $\gTY$ is a Tian-Yau metric and $D_{g_{\TY}} = d + \delta$ is its 
Hodge-de Rham operator.  For expository purposes, we carry out the construction for the scalar Laplacian only, but all steps below 
translate easily to this other operator (and for certain more general $\bfa$-operators as well). 

\medskip

\noindent{\bf Decomposition of $\LapTY$}   

The first part of this construction is to recognize that near $\del M$, the Laplacian admits a functional decomposition into three fully 
elliptic summands, the first a $b$-operator, the second a $c$-operator and the third an $\bfa$-operator.   This decomposition is obtained
by considering the action of $\LapTY$ on functions in a collar neighborhood $\calU$ of $\del M$ which are either harmonic or 
perpendicular to the harmonic functions on $\bS^1$ factor, and then, for the $\bS^1$-harmonic part, restricting further to functions 
which are harmonic or perpendicular to harmonic on the $\bT^2$ factor.  (Of course, these decompositions are more interesting
for the Hodge Laplacian on forms, but we stay with functions to keep the exposition straightforward.)

More carefully, consider the space of fiber harmonic functions and corresponding orthogonal projection
\begin{equation*}
%\begin{aligned}
L^{2}H^*_{\bS^1}(\calU):=\{f \in L^2(\calU): \Delta_{\bS^{1}}f=0\},  \quad \Pi_{\theta}: L^2(\calU) \rightarrow L^{2}H^*_{\bS^1}(\calU).
%\end{aligned}
\end{equation*}
Setting $\Pi_\theta^\perp = I - \Pi_\theta$, then over $\calU$,
\[
\LapTY = \Pi_\theta \LapTY \Pi_\theta + \Pi_\theta^\perp \LapTY \Pi_\theta^\perp + E_1,
\]
where $E_1 = \Pi_\theta \LapTY \Pi_\theta^\perp + \Pi_\theta^\perp \LapTY \Pi_\theta$ contains the `cross-terms'. 
This extra term $E_1$ vanishes to whatever order the boundary fibration of $Y$ over $\bT^2$ extends into the interior. 

To continue, we regard $\Pi_\theta \LapTY \Pi_\theta$ as acting on sections of the bundle $\calH_\theta$ of $\bS^1$ harmonic
forms over the reduced manifold $[0,\epsilon_0) \times \bT^2$.   Now decompose this first summand further into functions
which are harmonic on $\bT^2$ and their orthogonal complement, with corresponding projection:
\begin{equation*}
L^{2}H^{*}_{\bT^2} (\calU; \calH_\theta) :=\{f \in L^2(M): \Delta_{\bT^{2}}f=0\}, \quad \Pi_y: L^{2}(M) \rightarrow L^{2}H^*_{\bT_2}(\calU; \calH_\theta).
\end{equation*}
As before, we write 
\[
\Pi_\theta \LapTY \Pi_\theta = \Pi_y \Pi_\theta \LapTY \Pi_\theta \Pi_y + \Pi_y^\perp \Pi_\theta \LapTY \Pi_\theta \Pi_y^\perp + E_2.
\]
As before, the extra cross-term $E_2$ decays at some rate as $x \to 0$.  This first summand on the right acts on a bundle
$\calH_{y,\theta}$ over $[0,\epsilon)$.

In terms of all of this, $\LapTY=\Delta_{\perp*}+\Delta_{0\perp}+\Delta_{00} + E$ where $E = E_1 + E_2$ and 
\begin{equation}\label{e:proj}
\Delta_{\perp*}=\Pi_{\theta}^{\perp} \LapTY \Pi_{\theta}^{\perp}, \ \Delta_{0\perp}=\Pi_{y}^{\perp}\Pi_{\theta} \LapTY \Pi_{\theta}\Pi_{y}^{\perp},
\ \Delta_{00}=\Pi_{y}\Pi_{\theta}\LapTY \Pi_{\theta} \Pi_{y}.
\end{equation}
We may regard each of these operators as acting on the entire neighborhood $\calU$, or alternately, on sections of the 
appropriate harmonic bundles over the reduced manifolds. 

The point of this decomposition is that these individual summands are each, up to multiplication by a power of $x$, fully elliptic in their 
respective calculi, i.e., in $\Diff_\bfa^2$, $\Diff_c^2$ and $\Diff_b^2$, respectively.     We now explain how to construct parametrices,
$G_{\perp *}$, $G_{0 \perp}$, $G_{00}$, for each of these principal summands.  Taking into account the various powers of $x$ weighting 
each component, we construct both left and right parametrices $G_{*,\ell}$ and $G_{*,r}$ for each of these components which include 
the appropriate `counterweights'.  Furthermore, the way in which the error term $E$ is handled is explained at the end.

% In the following, we also use the notation
% \[
% \calL_{\bfa} = \calL_{\perp *} + \calL_{\perp 0} + \calL_{00}
% \]
% for the corresponding components of $\calL_{\bfa} = x \LapTY$. 

\begin{remark}
For reasons that will become apparent below, the components of the corresponding decomposition of $\calL_\bfa$ will include certain 
additional powers of $x$ as prefactors; namely, we write
\begin{equation}
\calL_{\perp *} = x \Delta_{\perp *},\ \ \calL_{0 \perp } = x^{-1} \Delta_{0 \perp}, \ \ \mbox{and}\ \ \calL_{00} = x^{-3} \Delta_{00}.
\label{weightsL}
\end{equation}
\end{remark}

\bigskip

\noindent {\bf Parametrix for $\Delta_{\perp*}$}

The $\bfa$-symbol of the entire operator $\calL_{\bfa}$ is elliptic. This captures the uniform interior ellipticity (in the sense of $\bfa$-operators).
The projected operator $\calL_{\perp *} $ has the stronger property of full ellipticity: its restriction to the front face $\ff_{\bfa}$ 
is invertible on any polynomially weighted $L^2$ space. We may thus invoke the parametrix construction for fully elliptic $\bfa$-differential 
operators, as in \cite{GH2}, to invert this piece. 

More carefully, recall the projective coordinates $(S, Y) \in \RR^3$ defined near the front face $\ff_{\bfa}$ of the $\bfa$-double space 
$M^2_{\bfa}$.  In these coordinates, 
\[
\calL_{\perp *}=[(S{\tilde x}^{2}+1)^{3}\partial_{S}]^{2}+(S{\tilde x}^{2}+1)^{2}(\partial_{Y_{1}}^{2}+\partial_{Y_{2}}^{2})+ \Pi_\theta^\perp\partial_{\theta}^{2}
\Pi_\theta^\perp
\]
restricts to $\ff_{\bfa}$ as
\[
\del_S^2 + \Delta_Y + \Pi_\theta^\perp \del_\theta^2 \Pi_\theta^\perp.
\]
As advertised, this is invertible on $L^2( \RR^3 \times \bS^1)$ because $\Pi_\theta^\perp \del_\theta^2 \Pi_\theta^\perp \leq -1$ as a self-adjoint operator.
In fact, it is invertible on $\rho^s L^2 (\RR^3 \times \bS^1)$ for any $s \in \RR$, where $\rho = (1 + S^2 + |Y|^2)^{1/2}$. Hence there exists a solution $u$ to  
\[
(\del_S^2 + \Delta_Y + \Pi_\theta^\perp \del_\theta^2 \Pi_\theta^\perp) u(S,Y,\theta) = f(S,Y,\theta) \in \calC^\infty_0(\ff_{\bfa})
\]
which decays rapidly (and in fact, exponentially) as $\rho \to \infty$. This proves the full ellipticity of $\calL_{\perp *}$.   

Following \cite{GH2} (and in line with the typical elliptic parametrix construction in geometric microlocal analysis), the parametrix
for this component proceeds by first choosing an operator $\tilde{G}^{(1)}_{\perp *,r}$ which inverts the $\bfa$-symbol of $\calL_{\bfa}$ when
applied on the right, and is supported in a small neighborhood of the diagonal.  However, we must account for the restriction to the range of $\Pi_\theta^\perp$,
which can be done by using the partial symbol of $x\Delta$ in the $(x,y)$ (or $(S,Y)$) directions coupled with the globally
invertible operator $\Pi_\theta^\perp \del_\theta^2 \Pi_\theta^\perp$. The support of this inverse can be localized to a small
neighborhood of $\{ S=1, Y=0\} \times \bS^1 \times \bS^1$. 

This initial parametrix satisfies $\calL_{\perp *} \tilde{G}^{(1)}_{\perp *, r} = \mathrm{I} + \tilde{R}^{(1)}_r$, where 
the error term $\tilde{R}^{(1)}_r$ is smooth on $M^2_{\bfa}$ and supported near this same `extended' diagonal. 
The restriction of this error term to $\ff_{\bfa}$ is smooth and compactly supported.  We then correct this initial parametrix by
adding a second term, $\tilde{G}^{(2)}_{\perp *,r} \in \Psi^{-\infty}_{\bfa}$, the Schwartz kernel of which vanishes to all orders 
at all boundary faces except $\ff_{\bfa}$. Its restriction to this innermost front face satisfies 
\[
\calL_{\perp *} \tilde{G}^{(2)}_{\perp *,r} = -\tilde{R}^{(1)}_r + \tilde{R}^{(2)}_r \Rightarrow \calL_{\perp * } (\tilde{G}^{(1)}_r + \tilde{G}^{(2)}_r) = 
\mathrm{I} + \tilde{R}^{(2)}_r, 
\]
where the new remainder $\tilde{R}^{(2)}_r \in \Psi^{-\infty,1}_{\bfa}$, i.e., vanishes to first order on $\ff_{\bfa}$.  
Using the composition laws for the $\bfa$-calculus from \cite{GH2} and taking a Borel sum $\mathrm{I} + \tilde{S}$ of the Neumann series 
for $(\mathrm{I} + \tilde{R}^{(2)}_r)^{-1}$, we obtain a final parametrix 
\[
\tilde{G}_{\perp *, r} := (\tilde{G}^{(1)}_r + \tilde{G}^{(2)}_r) (\mathrm{I} + \tilde{S})
\]
which satisfies $\calL_{\perp *} \tilde{G}_r = \mathrm{Id} + \tilde{R}_r$, where $\tilde{R}_r \in \Psi^{-\infty, \infty}_{\bfa}$. In other 
words, this final error term vanishes to infinite order at all boundary faces of $M^2_{\bfa}$. 

Conjugating the equation $\calL_{\perp *} \tilde{G}_{\perp *, r} = \mathrm{I} + \tilde{R}_r$ by $x^{-1}$ gives
\[
x^{-1} \calL_{\perp *}  \tilde{G}_{\perp *, r} \, \tilde{x} = \mathrm{I} + x^{-1} \tilde{R}_r \tilde{x},
\]
so finally, setting $G_{\perp *, r} = \tilde{G}_{\perp *,r} \, \tilde{x}$, $R_r = x^{-1} \tilde{R}_r \tilde{x}$,  we conclude that
\[
\Delta_{\perp *} G_{\perp *,r} = \mathrm{I} + R_r.  % \ \ \mbox{where}\ \ G_{\perp *, r} = \tilde{G}_{\perp *,r} \tilde{x}, \ R_r = x^{-1} \tilde{R}_r \tilde{x},
\]
since $x^{-1}\calL_{\perp *} = \Delta_{\perp *}$ and $x^{-1}\, \mathrm{I} \,\tilde{x} = \mathrm{I}$. Here $\tilde{x}$ is the boundary defining function 
for $\del M$ lifted to the right factor of $M \times M$ 
and then to $M^2_{\bfa}$.    The conjugated error term has Schwartz kernel which still vanishes to infinite order at all boundary faces, hence lies in 
$\dot{\calC}^\infty(M\times M)$.  The new factor of $\tilde{x}$ in the parametrix itself  means that the Schwartz kernel of $G_{\perp *, r}$ vanishes to order $1$ 
on $\ff_{\bfa}$, which we write as $G_{\perp *,r} \in \Psi^{-2,1}_{\bfa}$.  This factor does not alter the infinite order vanishing at all other boundary faces. 

Using the analogous steps we can also construct a left parametrix $G_{\perp *, \ell} = \tilde{G}_{\perp *, \ell} \, \tilde{x} \in \Psi^{-2,1}_{\bfa}$.
% where $\tilde{G}_{\perp *, \ell}$ is a left parametrix for $\calL_{\perp *}$. 

As proved in \cite{GH2}, the left and right parametrices $\tilde{G}_{\perp *, \ell/r}$ have the expected boundedness properties on $\bfa$-Sobolev
spaces, and {\it relative to the volume form $dV_{\bfa} = dV_{g_{\bfa}}$}.  Note that the volume form for a Tian-Yau metric
satisfies $dV_{g_{\TY}} = x dV_{\bfa}$, so there is an equivalence
\[
x^c L^2(M, dV_{g_{\TY}}) = x^{c-1/2} L^2(M, dV_{\bfa}).
\]
The remainder terms map into $\dot{\calC}^\infty(M)$, hence are obviously compact on any weighted $\bfa$-Sobolev space. Incorporating the extra 
prefactor of $\tilde{x}$, this proves the 
\begin{proposition}
The operator $\Delta_{\perp *}$ admits left and right parametrices $G_{\perp*, \ell/r}\in \Psi_{\bfa}^{-2,1}(M)$ such that 
\[
\begin{aligned}
& G_{\perp*,\ell}\Delta_{\perp*} = \mathrm{I} + R_{\perp*,\ell}, \ \Delta_{\perp*}G_{\perp*,r} = \mathrm{I} + R_{\perp*,r}, \\
& \qquad \qquad R_{\perp *, \ell/r}  \in 
%\Pi_{\theta}^{\perp} x^{\infty}\Psi^{-\infty}_{\bfa}(M) \Pi_\theta^\perp = 
\Pi_\theta^\perp \, \dot{\calC}^\infty( M^2) \, \Pi_\theta^\perp.
\end{aligned}
\]
These yield bounded maps
\[
G_{\perp *, \ell/r} :  \Pi_\theta^\perp x^{c-1} H_{\bfa}^{\sigma}(M, dV_{\TY}) \rightarrow \Pi_\theta^\perp x^{c} H^{\sigma + 2}_{\bfa}(M, dV_{\TY}) \Pi_\theta^\perp
\]
and hence, by the compactness of the remainder terms, 
\[
\Delta_{\perp*}: \Pi_{\theta}^{\perp} x^{c}H_{\XX}^{\sigma+2}(M, dV_{\TY})\rightarrow\Pi_{\theta}^{\perp} x^{c-1}H_{\XX}^{\sigma}(M, dV_{\TY})
\] 
is Fredholm,  for any $c, \sigma \in \RR$.
\end{proposition}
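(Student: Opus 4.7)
The plan is essentially to apply the Grieser--Hunsicker parametrix machinery reviewed above to the reweighted operator $\calL_{\perp*} = x\Delta_{\perp*}$, then transfer the result to $\Delta_{\perp *}$ itself by conjugation with powers of $x$. The proof breaks into four steps: verify full ellipticity, build the parametrix modulo $\Psi^{-\infty,\infty}_{\bfa}$, conjugate, and extract Fredholmness from the resulting mapping properties.

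The first step, and in some sense the only \emph{new} thing to check, is that $\calL_{\perp*}$ is fully elliptic as an element of $\Diff^2_{\bfa}(M)$. Interior ellipticity follows because $\calL_{\bfa}$ is $\bfa$-elliptic; what must be checked is that the normal operator at the front face $\ff_{\bfa}$ is invertible on weighted $L^2$. Using the projective coordinates $(S,Y,\theta)$, the restriction is $\del_S^2 + \Delta_Y + \Pi_\theta^\perp \del_\theta^2 \Pi_\theta^\perp$ acting on $L^2(\RR^3\times \bS^1)$. Since $\Pi_\theta^\perp$ projects off the kernel of $\del_\theta^2$, the spectrum of $\Pi_\theta^\perp\del_\theta^2\Pi_\theta^\perp$ on its range is contained in $(-\infty,-1]$, so the normal operator is a strictly negative self-adjoint elliptic operator on $\RR^3\times\bS^1$; standard Fourier arguments give exponential decay of its Green's function, hence invertibility on $\rho^s L^2$ for every $s$. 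This is the main conceptual point, and the appearance of the projection $\Pi_\theta^\perp$ is what makes this work --- the unprojected operator would have a zero eigenspace.

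Given full ellipticity, the parametrix construction follows the template already recalled in the excerpt. First build $\tilde G^{(1)}_{\perp*,r}$ inverting the diagonal ($\bfa$-)symbol of $\calL_{\bfa}$, with Schwartz kernel supported in a small neighborhood of $\diag_{\bfa}$ and compressed by $\Pi_\theta^\perp$ on both sides; this produces a remainder $\tilde R^{(1)}_r$ whose restriction to $\ff_{\bfa}$ is smooth and compactly supported. Next, invert the normal operator against that restriction to produce $\tilde G^{(2)}_{\perp*,r}\in\Psi^{-\infty}_{\bfa}$, improving the vanishing of the remainder by one order at $\ff_{\bfa}$. Iterating and Borel summing produces a remainder vanishing to infinite order at $\ff_{\bfa}$; a Neumann series using the composition law from \cite{GH2} then absorbs the remaining error so that the final remainder lies in $\Psi^{-\infty,\infty}_{\bfa}$, i.e.\ has Schwartz kernel in $\dot\calC^\infty(M^2)$. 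The left parametrix is constructed analogously. The main technical care here is to carry the projectors $\Pi_\theta^\perp$ along at each stage so that the parametrix and remainder live in $\Pi_\theta^\perp \cdot \cdot \Pi_\theta^\perp$.

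For the third step, set $G_{\perp*,r}=\tilde G_{\perp*,r}\,\tilde x$ and conjugate the identity $\calL_{\perp*}\tilde G_{\perp*,r}=\mathrm I+\tilde R_r$ by $x^{-1}$ on the left and $\tilde x$ on the right; the prefactor of $x^{-1}$ in $\Delta_{\perp*}=x^{-1}\calL_{\perp*}$ is absorbed, and the resulting kernel $G_{\perp*,r}$ vanishes to order one on $\ff_{\bfa}$ (hence lies in $\Psi^{-2,1}_{\bfa}$) while the remainder is still in $\dot\calC^\infty$. Finally, the boundedness of elements of $\Psi^{-2,1}_{\bfa}$ on weighted $\bfa$-Sobolev spaces is established in \cite{GH2} relative to $dV_{g_{\bfa}}$; the conversion $dV_{g_{\TY}}=x\,dV_{\bfa}$ translates this into the advertised mapping between $x^{c-1}H^\sigma_{\bfa}(M,dV_{\TY})$ and $x^cH^{\sigma+2}_{\bfa}(M,dV_{\TY})$ (the extra half powers of $x$ match because weights are symmetric under duality for $L^2$). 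Since $R_{\perp*,\ell/r}$ has kernel in $\dot\calC^\infty(M^2)$ it is compact on every weighted $\bfa$-Sobolev space, and Fredholmness of $\Delta_{\perp*}$ then follows by the standard argument from existence of left and right parametrices modulo compact errors.
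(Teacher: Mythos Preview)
Your proposal is correct and follows essentially the same approach as the paper: verify full ellipticity of $\calL_{\perp*}$ via the normal operator $\del_S^2+\Delta_Y+\Pi_\theta^\perp\del_\theta^2\Pi_\theta^\perp$ on $\ff_{\bfa}$, run the standard three-stage Grieser--Hunsicker parametrix construction, then conjugate by $x$ to pass from $\calL_{\perp*}$ to $\Delta_{\perp*}$ and read off Fredholmness from boundedness plus compactness of the residual terms. The only quibble is your parenthetical about ``half powers of $x$ matching by duality,'' which is unnecessary and a bit muddled---the conversion is just the direct shift $x^cL^2(M,dV_{\TY})=x^{c-1/2}L^2(M,dV_{\bfa})$ coming from $dV_{\TY}=x\,dV_{\bfa}$.
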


\bigskip

\noindent {\bf Parametrix of $\Delta_{0\perp}$}

We now turn to the second summand 
\[
\Delta_{0\perp}=x^{-1} \left(  (x^{3}\partial_{x})^{2}+x^{2}\Pi_y^\perp \Pi_\theta \Delta_{\bT^{2}} \Pi_\theta \Pi_y^\perp \right),
\]
which becomes a cusp operator once we divide by an additional power of $x$:
\[
\calL_{0 \perp} = x^{-1} \Delta_{0\perp} = (x^2\del_x)^2+ x^3\del_x + \Pi_y^\perp \Pi_\theta \Delta_{\bT^{2}} \Pi_\theta \Pi_y^\perp,
\]
This is an elliptic combination of the basic cusp vector fields $x^2\del_x$ and $\del_{y_j}$, acting on sections of the bundle $\calH_\theta$ over 
$[0,\epsilon_0) \times \bT^2$, hence has invertible $c$-symbol. The term $x^3\del_x$ is lower order in the cusp algebra introduces
only perturbative correction terms. Just as in the previous case, $\calL_{0 \perp}$ is fully elliptic in the $c$-calculus. 

Regarding this operator as acting on functions in the range of $\Pi_y^\perp \Pi_\theta$ on the collar neighborhood $\calU$ of $\del M$,
we can construct a parametrix in the same three stages as above. Namely, the first term, obtained by the $c$-symbol calculus, is supported 
near the lift to $M^2_c$ of the submanifold $\{x = \tilde{x}\} \times (\bT^2)^2 \times (\bS^1)^2$.  This leaves a remainder which is smooth 
on $M^2_c$ and vanishes to infinite order at all boundary faces except the front face $\ff_c$.  The second part of the parametrix
is obtained by inverting the restriction of $\calL_{0 \perp}$ to this front face. In the projective coordinates 
$(s', y, \theta, \tilde{y},\tilde{\theta}, \tilde{x})$ on $M^2_c$, this restriction equals $\del_{s'}^2 + \Pi_y^\perp \Pi_\theta \Delta_{\bT^2} 
\Pi_\theta \Pi_y^\perp$, which is certainly invertible. In fact, if $f \in \Pi_y^\perp \Pi_\theta \calC^\infty_0(\RR_{s'}  \times \bT^2 \times \bS^1)$,
then there is a unique solution $u$ to $(\del_{s'}^2 + \Pi_y^\perp \Pi_\theta \Delta_{\bT^2} \Pi_\theta \Pi_y^\perp) u = f$ which decays rapidly as 
$|s'| \to \infty$. Hence this component is fully elliptic in the $c$-calculus.  The remainder term at this
stage is smooth, vanishes to order $1$ at $\ff_c$ and to infinite order at all other boundary faces.  The Neumann series 
argument leads to a final correction to a right parametrix $\tilde{G}_{0 \perp, r}$ which leaves a remainder term vanishing to all orders at all boundary faces. 
An identical argument gives a left parametrix $\tilde{G}_{0 \perp, \ell}$. 

Conjugating 
\[
\calL_{0 \perp} \tilde{G}_{0 \perp,r} = \mathrm{I} + \tilde{R}_{0,\perp,r}\ \mbox{and}\ \ \tilde{G}_{0 \perp, \ell} \, \calL_{0 \perp, \ell} = 
\mathrm{I} + \tilde{R}_{0\perp, \ell}
\]
by $x$ shows that $G_{0 \perp,\ell/r} = \tilde{G}_{0 \perp, \ell/r} \, \tilde{x}^{-1} \in \Psi_c^{-2,-1}$ are left and right parametrices for $\Delta_{0 \perp}$
with remainder terms vanishing to all orders at the boundaries of $M \times M$.  These act on functions in the range of $\Pi_y^\perp \Pi_\theta$.  
\begin{proposition}
The parametrices $G_{0\perp, \ell/r} \in \Psi_c^{-2, -1}(M)$ satisfy
\[
G_{0\perp,\ell} \Delta_{0\perp} = \mathrm{I} + R_{0\perp, \ell}, \ \Delta_{0\perp}G_{0\perp,r} = \mathrm{I} + R_{0\perp, r}
\]
where $R_{0\perp, \ell/r} \in \dot{\calC}^\infty(M^2)$. These operators are bounded as maps
\[
G_{0 \perp, \ell/r}: \Pi_y^\perp \Pi_\theta x^{c+1} H^\sigma_c(M, dV_{\TY}) \rightarrow  \Pi_y^\perp \Pi_\theta x^{c} H^{\sigma+2}_c(M, dV_{\TY})
\]
for any $c, \sigma \in \RR$, and by the obvious compactness of the remainder terms, 
\[
\Delta_{0\perp}:  \Pi_y^\perp \Pi_{\theta} x^cH_{c}^{\sigma+2}(M, dV_{\TY})\rightarrow \Pi_{y}^{\perp} \Pi_\theta x^{c+1}H_{c}^{\sigma}(M, dV_{\TY})
\]
is Fredholm for all such $c, \sigma$. 
\end{proposition}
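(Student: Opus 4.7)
The plan is to mirror the parametrix construction given for $\Delta_{\perp*}$, replacing the $\bfa$-calculus by the cusp calculus throughout. First, I would rewrite $\Delta_{0\perp} = x\,\calL_{0\perp}$, where $\calL_{0\perp} = (x^2\del_x)^2 + x^3\del_x + \Pi_y^\perp\Pi_\theta\Delta_{\bT^2}\Pi_\theta\Pi_y^\perp$ acts on sections of $\calH_\theta$ over the reduced space $[0,\epsilon_0)\times\bT^2$. This is an elliptic combination of the cusp vector fields $\{x^2\del_x,\del_{y_1},\del_{y_2}\}$, the term $x^3\del_x$ being of lower order in the cusp filtration and therefore merely perturbative. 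The key point, which I would verify first, is full ellipticity: the restriction of $\calL_{0\perp}$ to $\ff_c$ in projective coordinates $(s',y,\theta)$ equals $\del_{s'}^2 + \Pi_y^\perp\Pi_\theta\Delta_{\bT^2}\Pi_\theta\Pi_y^\perp$, and since $\Pi_y^\perp\Delta_{\bT^2}\Pi_y^\perp$ is bounded below by the first nonzero eigenvalue of $\Delta_{\bT^2}$, this model operator is invertible (in fact, with exponentially decaying Green's function in $s'$) on the relevant weighted $L^2$ space.

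Next, I would carry out the standard three-stage parametrix construction in $\Psi^*_c$ as described in \cite{GH2}. Stage one inverts the principal $c$-symbol, producing $\tilde G^{(1)}_{0\perp,r}$ supported near the lifted diagonal in $M^2_c$ with a remainder smooth on $M^2_c$ and vanishing at all boundary faces except $\ff_c$. Stage two corrects by an element $\tilde G^{(2)}_{0\perp,r}\in\Psi^{-\infty}_c$ supported in a neighborhood of $\ff_c$, whose restriction to $\ff_c$ is given by the inverse of the model operator from the previous paragraph; this pushes the remainder to vanish to first order on $\ff_c$. Stage three uses the $c$-calculus composition law together with a Borel sum of the Neumann series for $(\mathrm I + \tilde R^{(2)}_r)^{-1}$ to produce a final right parametrix $\tilde G_{0\perp,r}$ with remainder in $\Psi^{-\infty,\infty}_c$, i.e.\ with Schwartz kernel in $\dot\calC^\infty(M^2)$. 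The symmetric construction yields $\tilde G_{0\perp,\ell}$.

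Now I would conjugate: setting $G_{0\perp,\ell/r} = \tilde G_{0\perp,\ell/r}\,\tilde x^{-1}$ and using $x^{-1}\calL_{0\perp}\tilde x \cdot \tilde x^{-1} = \Delta_{0\perp}\cdot G_{0\perp,r}/\tilde G_{0\perp,r}$, the identities $\calL_{0\perp}\tilde G_{0\perp,r} = \mathrm I + \tilde R_{0\perp,r}$ translate into $\Delta_{0\perp}G_{0\perp,r} = \mathrm I + R_{0\perp,r}$ with $R_{0\perp,r}\in\dot\calC^\infty(M^2)$, and similarly on the left. The extra factor $\tilde x^{-1}$ shifts the weight on the front face $\ff_c$ by $-1$, which is exactly why $G_{0\perp,\ell/r}\in\Psi_c^{-2,-1}(M)$. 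The boundedness statement then follows from the standard mapping properties of the cusp calculus between weighted cusp Sobolev spaces \cite{GH2}, after accounting for the discrepancy $dV_{g_{\TY}} = x\,dV_{\bfa}$ between the Tian-Yau and reference volume forms (which is responsible for the shift $c\mapsto c+1$ on the target side).

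Finally, Fredholmness is immediate: the remainder operators $R_{0\perp,\ell/r}$ have Schwartz kernels in $\dot\calC^\infty(M^2)$, so they are smoothing and decay to all orders at $\del M$, hence compact on every weighted cusp Sobolev space. The existence of bounded left and right parametrices modulo compact errors then gives Fredholmness of $\Delta_{0\perp}$ on the indicated spaces. The main technical point that requires care, and which I would verify most carefully, is the bookkeeping of weights under the conjugation by $x$ and $\tilde x$: one must check that $\tilde x^{-1}$ does not destroy the infinite-order vanishing of the remainder at the boundary faces other than $\ff_c$, and that the resulting operator indeed sits in the correct weighted class $\Psi_c^{-2,-1}$ so as to give the stated mapping properties.
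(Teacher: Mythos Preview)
Your proposal is correct and follows essentially the same route as the paper: pass to the cusp operator $\calL_{0\perp}=x^{-1}\Delta_{0\perp}$, verify full ellipticity via the invertibility of the model $\del_{s'}^2+\Pi_y^\perp\Pi_\theta\Delta_{\bT^2}\Pi_\theta\Pi_y^\perp$ on $\ff_c$, run the three-stage parametrix construction in the $c$-calculus, and then conjugate by $x$ to obtain $G_{0\perp,\ell/r}=\tilde G_{0\perp,\ell/r}\,\tilde x^{-1}\in\Psi_c^{-2,-1}$. Two small points to clean up: the displayed identity in your conjugation step is garbled and should simply read $x\,\calL_{0\perp}\,\tilde G_{0\perp,r}\,\tilde x^{-1}=\mathrm I+x\,\tilde R_{0\perp,r}\,\tilde x^{-1}$; and the weight shift $c\mapsto c+1$ comes directly from this factor of $\tilde x^{-1}$ (equivalently, from $\Delta_{0\perp}=x\,\calL_{0\perp}$), not from the volume-form discrepancy, which is a separate bookkeeping issue.
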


\bigskip

\noindent {\bf Parametrix of $\Delta_{00}$}

The final term is even simpler.  Acting on the bundle $\calH_{y,\theta}$ of functions which are harmonic on both $\bS^1$ and $\bT^2$ over $[0,\epsilon_0)$, 
$\calL_{00} = x^{-3} \Delta_{00}$ reduces to the $1$-dimensional elliptic $b$-operator: 
\[
x^{-3} (x^{-1} ( (x^3\del_x)^2 - x^5\del_x)) = x^2 \del_x^2 + 2x \del_x.
\]
Using the projective coordinate $s = x/\tilde{x}$, this lifts to $[0,\epsilon_0)^2_b$ (or $M^2_b$) as 
\[
\calL_{00} = s^2\del_s^2 + 2s\del_s,
\]
and since the defining function $\tilde{x}$ for the front face $\ff_b$ in this coordinate system does not appear, full ellipticity corresponds
to the invertibility of this operator acting between weighted $b$-Sobolev spaces.  

Unlike for the previous two components, full ellipticity is not true for arbitrary values of the weight parameter.  Indeed, this dilation-invariant
operator on $\RR^+$ has a basis of homogeneous solutions consisting of the two functions $s^\gamma$, $\gamma_1 = -1, \ \gamma_2 = 0$.  These two
values of $\gamma$ are called the indicial roots of $\calL_{00}$. The monomial $s^\gamma$ just `misses' lying in $s^c L^2(\RR^+, ds/s^3)$ 
when $c = \gamma-1$ (in the sense that $s^\gamma$ lies in $s^{c-\epsilon}L^2$ but is excluded from $s^{c+\epsilon} L^2$ near $s=0$
for any $\epsilon > 0$, with the analogous statement near $s=\infty$), so we define the indicial weights $c_j = \gamma_j - 1$, i.e., $c_1 = -2$,
$c_2 = -1$.   The basic fact then is that $\calL_{00}: s^c H^{\sigma+2}_b( \RR^+, s^{-3}ds) \to s^c H^\sigma_b(\RR^+, s^{-3}ds)$ is
invertible if and only if $c \not\in \{c_1, c_2\}$.  When $c$ is equal to one of these two indicial values, this map does not have closed range. 
We remark that we are using the measure $s^{-3}ds$ here since it corresponds to the Tian-Yau 
measure $dV_{\TY} \cong x^{-3} dx dy d\theta$. This key fact can be proved easily using the Mellin transform.  In summary,
$\calL_{00}$ is fully elliptic on these weighted $b$-Sobolev spaces for all weight parameters $c \neq -2, -1$. 

We proceed as before to construct parametrices for $\calL_{00}$ in the $b$-calculus. This is particularly elementary in this 
$1$-dimensional setting, see \cite{APS, Ma-edge1}.  We first choose a symbolic inverse supported near $\{x = \tilde{x}\} 
\times (\bT^2)^2 \times (\bS^1)^2$, then a correction term which solves away the leading coefficient of the initial
remainder term on the front face, and finally, using a Neumann series, a third term which leads to a remainder term
vanishing to infinite order on the front face.  All of these operators act on  functions which are harmonic on both $\bT^2$ and $\bS^1$. 
%parametrix $\tilde{G}_{00} \in \Psi^{-2}_b(M)$
The important new feature is that in the second step of the construction, where we solve away the restriction of the remainder term on 
the front face $\ff_b$,  we must choose the solution $u$ to $ (s^2\del_s^2  + 2s \del_s) u = f \in \calC^\infty_0(\RR^+)$ differently,
according to whether $c < -2$, $-2 < c < -1$ or $c > -1$.  Indeed, if $f$ is supported in $[a,b]$, and if $c < -2$, then we choose the 
solution $u$ which is a linear combination of $s^{-1}$ and $s^0$ for $0 < s \leq a$ and which vanishes for $s > b$; if $-2 < c < -1$, 
we choose the solution $u$ which is a multiple of $s^{0}$ for $s \leq a$ and of $s^{-1}$ for $s \geq b$; finally, if $c > -1$, we choose 
$u$ to vanish for $s \leq a$ and to be a linear combination of $s^0$ and $s^{-1}$ when $s \geq b$.  In each of these cases, the resulting
correction term to the parametrix has support spreading to either the left face of $M^2_b$, where $x=0$, or the right face, where $\tilde{x} = 0$,
or both.  The final parametrices $\tilde{G}_{00, \ell/r}$ are polyhomogeneous at the left and right faces:
\[
\tilde{G}_{00, \ell/r} \in \begin{cases}  \Psi_b^{-2, 0, -1, \emptyset},\ \ & c < -2 \\ \Psi_b^{-2, 0, 0, 1},\ \ &-2 < c < -1 \\
\Psi_b^{-2, 0, \emptyset, 0},\ \ &c > -1.  
\end{cases}
\]
The parametrix for $\Delta_{00}$ is obtained by conjugating by $x^3$.  Thus
\begin{equation}
G_{00, \ell/r} = \tilde{G}_{00, \ell, r} \tilde{x}^{-3} \in 
\begin{cases}  \Psi_b^{-2, -3, -1, \emptyset},\ \ & c < -2 \\ \Psi_b^{-2, -3, 0, -2},\ \ &-2 < c < -1 \\
\Psi_b^{-2, -3, \emptyset, -3},\ \ &c > -1.  
\end{cases}
\label{diffG00}
\end{equation}

\begin{proposition}
For each $\sigma \in \RR$ and $c \in \RR \setminus \{-2, -1\}$ there exist parametrices $G_{00, \ell/r}$ as in \eqref{diffG00} such that 
\[
G_{00, \ell} \Delta_{00} = \mbox{I} + R_{00, \ell}, \ \ \Delta_{00}G_{00, r} = \mbox{I} + R_{00, r},
\]
where $R_{00, \ell, r}$ are polyhomogeneous on $M^2$ (without needing to blow up), with index sets on the left and right faces precisely
the same as those for $G_{00, \ell/r}$. Then 
\[
G_{00, \ell/r}:  x^{c+3} \Pi_y \Pi_\theta H^{\sigma}_b(M, dV_{\TY}) \rightarrow x^c \Pi_y \Pi_\theta H^{\sigma+2}_b(M, dV_{\TY})
\]
are bounded, and 
\[
\Delta_{00}: \Pi_{y}\Pi_{\theta}\, x^{c}H_{b}^{\sigma+2}(M, dV_{\TY}) \rightarrow \Pi_{y}\Pi_{\theta}\, x^{c+3}H_{b}^{\sigma}(M, dV_{\TY})
\]
is Fredholm for any $c \neq -2, -1$ and any $\sigma\in \RR$. 
\end{proposition}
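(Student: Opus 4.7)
The plan is to follow the same three-stage parametrix construction used for $\Delta_{\perp*}$ and $\Delta_{0\perp}$, now carried out in the $b$-calculus on the reduced one-dimensional space $[0,\epsilon_0)_x$, and then to conjugate by $\tilde{x}^{-3}$. After projecting onto $\calH_{y,\theta}$ and multiplying by $x^{-3}$, the operator becomes $\calL_{00} = s^2\del_s^2 + 2s\del_s$, whose indicial polynomial $\gamma(\gamma+1)$ has roots $\gamma_1=-1,\gamma_2=0$, yielding the $L^2$-critical weights $c_1=-2,\,c_2=-1$ once the Tian--Yau measure $s^{-3}ds$ is taken into account. A standard Mellin transform argument shows that $\calL_{00}: s^c H^{\sigma+2}_b \to s^c H^\sigma_b$ is invertible precisely when $c \notin \{-2,-1\}$; this is the source of the weight restriction in the statement.

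First, I construct the symbolic parametrix $\tilde G^{(1)}_{00}\in \Psi^{-2}_b$ supported near $\diag_b$, inverting the $b$-symbol of $\calL_{00}$; this leaves an error $\tilde R^{(1)}$ smooth on $M^2_b$ and compactly supported near $\ff_b\cap\diag_b$. Second, I correct by an operator $\tilde G^{(2)}_{00}\in\Psi^{-\infty}_b$ whose restriction to $\ff_b$ solves $\calL_{00}u = -\tilde R^{(1)}|_{\ff_b}$ on $\RR^+$. Here the weight parameter $c$ intervenes: given $f\in\calC^\infty_0([a,b])$, the two-parameter family of solutions spanned by $s^{-1}$ and $s^0$ must be selected so that the asymptotics at $s\to 0$ and $s\to\infty$ both lie in the target weighted space. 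For $c<-2$ both indicial monomials are admissible at $s=0$ but neither at $s=\infty$, so I take $u$ to be a combination of $s^{-1},s^0$ for $s\leq a$ and identically zero for $s\geq b$; for $-2<c<-1$ only $s^0$ is admissible at $s=0$ and only $s^{-1}$ at $s=\infty$, giving the corresponding two-sided asymptotics; for $c>-1$ the roles reverse from the first case. These choices determine the leading behavior of $\tilde G^{(2)}_{00}$ at the left and right faces of $M^2_b$. Third, a Neumann series argument using the $b$-composition laws of~\cite{Ma-edge1,APS} kills the residual error at $\ff_b$ to infinite order. The resulting $\tilde G_{00,r}$ satisfies $\calL_{00}\tilde G_{00,r}=\mathrm{I}+\tilde R_{00,r}$; since $\tilde R_{00,r}$ vanishes to infinite order at $\ff_b$, it descends to a polyhomogeneous kernel on $M^2$ itself, with left/right index sets inherited from $\tilde G_{00,r}$. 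The analogous construction produces $\tilde G_{00,\ell}$.

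Conjugating by $\tilde x^{-3}$ turns $\calL_{00}$ into $\Delta_{00}$ and shifts the right-face and front-face indices by $-3$, producing the operator classes recorded in~\eqref{diffG00}. The boundedness of $G_{00,\ell/r}$ from $x^{c+3}H^\sigma_b$ to $x^c H^{\sigma+2}_b$ follows from the general mapping properties of $\Psi^{m,\calE}_b$ on weighted $b$-Sobolev spaces, once one checks that the real parts of the index sets at the left and right faces strictly dominate (respectively, are strictly dominated by) the weight $c$ in the appropriate direction. The residual operators $R_{00,\ell/r}$ share these same index sets and therefore map into weighted spaces with strictly better decay than required, hence are compact by Rellich; this produces the Fredholm conclusion in the standard way.

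The only non-routine step is the bookkeeping in the second stage: one must verify that each of the three cases $c<-2$, $-2<c<-1$, $c>-1$ yields parametrices whose off-diagonal asymptotics interact correctly with the indicial weights so that the final remainders $R_{00,\ell/r}$ are genuinely compact on the stated target spaces. The case distinction is forced by the admissibility of the two indicial monomials $s^{-1},s^0$ at $0$ and at $\infty$, and it is precisely the failure of this admissibility at $c\in\{-2,-1\}$ that produces the Fredholm obstruction at those indicial weights.
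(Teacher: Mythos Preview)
Your proposal is correct and follows essentially the same approach as the paper: reduce to the one-dimensional $b$-operator $\calL_{00} = s^2\del_s^2 + 2s\del_s$ with indicial roots $-1,0$ and critical weights $-2,-1$, run the standard three-stage $b$-parametrix construction with the weight-dependent choice of solution on $\ff_b$ in the three cases $c<-2$, $-2<c<-1$, $c>-1$, and then conjugate by $\tilde x^{-3}$ to pass from $\calL_{00}$ to $\Delta_{00}$. The case analysis you give for which indicial monomials are admissible at $s\to 0$ versus $s\to\infty$ matches the paper's exactly, and the Fredholm conclusion via compactness of the polyhomogeneous remainders is the same.
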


\bigskip

\noindent {\bf Nontrivial fibrations}

Before the final step in this construction, we record the few minor changes needed when the $\bS^1$ fibration of $Y$ is nontrivial. 
In fact, the only real difference is that the operator $x \LapTY$ has a slightly more complicated appearance simply because 
the vector field $x\del_{y_2}$ must be replaced by $x(\del_{y_2} - y_1 \del_\theta)$.  The extra term in this only appears in the 
component $\Delta_{\perp *}$. 

As before, $x\LapTY$ is still fully elliptic. Indeed, the sequence of blowups to obtain $M^2_{\bfa}$ is still the same, and 
using the `iterated' projective coordinates $(S, Y, \theta)$, the lift of $x(\del_{y_2} - y_1\del_\theta)$ to this final blowup 
equals 
\[
(1 + \tilde{x}S)( \del_{Y_2} - \tilde{x}(\tilde{y} + \tilde{x}Y_1) \del_\theta),
\]
which has the same restriction $\del_{Y_2}$ on $\ff_{\bfa}$ as in the product case. 

The decomposition of $x\LapTY$ into its various components, and the parametrix construction for each of these components,
goes exactly as before.

\bigskip

\noindent {\bf Assembling the components}

At last, we add these three components of the left and right parametrices to obtain the operators
\[
\begin{aligned}
& G_{\ell/r} :=  G_{\perp *,\ell/r} + G_{0 \perp, \ell/r} + G_{00,\ell,r}, \\ 
G_{\perp *,\ell/r} \in &  \Psi^{-2,1}_{\bfa}(M),\  G_{0 \perp,\ell/r} \in \Psi^{-2,-1}_c(M),\ G_{00,\ell/r} \in \Psi^{-2, -3, E_{10}, E_{01}}_b(M),
\end{aligned}
\]
where $E_{10}$ and $E_{01}$ are the index sets at the left and right faces of $M^2_b$ chosen according to the weight $c$.  

If the fibration is extended to some fixed order into the interior, we must still incorporate the cross-term $E$, which accounts for the extent to
which the fiber-harmonic parts interact with their orthogonal complements.  This term is lower order at $\del M$, hence does not affect
the steps in the parametrix construction where the model problems on the various front faces must be inverted. Hence the inclusion
of this term produces only higher order terms in the expansions at these front faces, and at the side faces for $\Delta_{00}$.  A further
Neumann series iteration argument produces final correction terms to $G_{\ell/r}$, which result in remainder terms which are polyhomogeneous 
on $M^2$.

\begin{theorem}\label{thm:main}
If $g_{TY}$ is any metric with Tian-Yau asymptotics on $M^4$, then for any $c \in \RR \setminus \{-2, -1\}$ and $\sigma \in \RR$,
\begin{multline}
\Delta: x^{c}H_{\XX}^{\sigma+2}(M, dV_{\TY}) \longrightarrow  \Pi_{\theta}^{\perp} x^{c-1}H_{\XX}^{\sigma}(M, dV_{\TY})\oplus \\
\Pi_{\theta}\Pi_{y}^{\perp} x^{c+1} H_{\XX}^{\sigma}(M, dV_{\TY}) \oplus  \Pi_{y}\Pi_{\theta}x^{c+3}H_{\XX}^{\sigma}(M, dV_{TY})
\end{multline}
is Fredholm. 
\end{theorem}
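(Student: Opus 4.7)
The plan is to assemble the three parametrices $G_{\perp*,\ell/r}$, $G_{0\perp,\ell/r}$, $G_{00,\ell/r}$ constructed in the preceding propositions into a single operator $G_{\ell/r} = G_{\perp*,\ell/r} + G_{0\perp,\ell/r} + G_{00,\ell/r}$, which acts block-diagonally with respect to the orthogonal decomposition of $L^2$ determined by the projections $\Pi_\theta^\perp$, $\Pi_\theta \Pi_y^\perp$, and $\Pi_\theta\Pi_y$.  Each summand in the decomposition $\Delta = \Delta_{\perp*} + \Delta_{0\perp} + \Delta_{00} + E$ valid near $\del M$ has already been shown to be Fredholm modulo compact errors in its natural calculus.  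The three target spaces in the theorem live inside a common weighted $L^2$ space via these orthogonal projections, and the three Sobolev scales $H^\sigma_{\bfa}$, $H^\sigma_c$, $H^\sigma_b$ restrict to equivalent norms on the appropriate fiber-harmonic subspaces, since the extra vector fields $\del_\theta$ and $x\del_{y_j}$ act trivially there.  Boundedness of $\Delta$ into the stated uniform $\bfa$-Sobolev target follows immediately, because the more refined Sobolev spaces embed into $H^\sigma_{\bfa}$.

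The first nontrivial step is to handle the cross-term $E = E_1 + E_2$.  This measures the failure of $\Pi_\theta$ and $\Pi_y$ to commute with $\Delta$ exactly; by the Tian-Yau asymptotics (exponential convergence of $g_{\TY}$ to the Gibbons-Hawking model and extension of the boundary fibration to arbitrary polynomial order into a collar), its coefficients vanish to whatever order we require at $\del M$.  Hence the compositions $G_{\ell/r}\circ E$ and $E\circ G_{\ell/r}$ gain extra decay relative to the stated parametrix mapping properties and are compact on the relevant weighted spaces.  A Neumann series argument, analogous to the one used at the end of each individual parametrix construction, then upgrades $G_{\ell/r}$ to a genuine parametrix for $\Delta$ with compact remainder, yielding finite-dimensional kernel, closed range, and finite-dimensional cokernel.

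The main obstacle is the bookkeeping of weights.  Each projected component becomes fully elliptic only after the specific multiplication by a power of $x$ recorded in \eqref{weightsL}; the three distinct weight shifts $c \mapsto c-1$, $c \mapsto c+1$, $c \mapsto c+3$ on the target side are precisely what emerges from the individual propositions when one converts back from $\calL$'s to $\Delta$'s, and they must be seen to match the uniform domain weight $c$ on the left.  The exclusion $c \notin \{-2,-1\}$ enters only through the indicial roots of the one-dimensional $b$-operator $\calL_{00}$, since $\calL_{\perp*}$ and $\calL_{0\perp}$ are fully elliptic at every weight.  On compact subsets of $\mathrm{int}\,M$ the operator $\Delta$ is classically elliptic, so no additional work is needed for the interior contribution, and the final assembly of $G_{\ell/r}$ in $\Psi^{-2,1}_{\bfa}(M) + \Psi^{-2,-1}_c(M) + \Psi^{-2,-3,E_{10},E_{01}}_b(M)$ only requires that the three calculi are compatible at the boundary faces they share, which they are by the iterated blowup construction.
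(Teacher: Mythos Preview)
Your proposal is correct and follows essentially the same approach as the paper: assemble $G_{\ell/r} = G_{\perp*,\ell/r} + G_{0\perp,\ell/r} + G_{00,\ell/r}$ from the three component parametrices, observe that the cross-term $E$ vanishes to sufficiently high order at $\del M$ that it contributes only compact corrections absorbable by a Neumann series, and read off the weight shifts and the exclusion $c \notin \{-2,-1\}$ from the individual propositions. Your discussion of the equivalence of the $H^\sigma_{\bfa}$, $H^\sigma_c$, $H^\sigma_b$ norms on the fiber-harmonic subspaces and the compatibility of the three calculi under the iterated blowup makes explicit a few points the paper leaves tacit, but the argument is the same.
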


\begin{corollary}
If $u \in x^c H^{\sigma+2}_{\bfa}(M, dV_{\TY})$ satisfies $\LapTY u = 0$, then $u$ is polyhomogeneous, with index set $E$ containing 
only pairs $(\gamma, k)$ with $\gamma > c +1$.  
\label{phgcor}
\end{corollary}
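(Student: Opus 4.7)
\emph{Proof proposal.} The plan is to invoke the left parametrix assembled in the proof of Theorem~\ref{thm:main}. Writing $G_\ell = G_{\perp*,\ell} + G_{0\perp,\ell} + G_{00,\ell}$ for this parametrix and $R_\ell$ for the corresponding remainder, the identity $G_\ell \LapTY u = u + R_\ell u$ collapses under the hypothesis $\LapTY u = 0$ to the fixed-point equation $u = -R_\ell u$. The polyhomogeneity of $u$ and its index set will then be read off directly from the Schwartz-kernel structure of $R_\ell$.

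By construction, $R_{\perp*,\ell}$ and $R_{0\perp,\ell}$ lie in $\dot{\calC}^\infty(M^2)$, so their contribution to $R_\ell u$ vanishes to infinite order at $\del M$, while $R_{00,\ell}$ is polyhomogeneous on $M^2$ with left- and right-face index sets dictated by the case of $c$. When the fibration of $Y$ over $\bT^2$ is nontrivial, the cross-term $E$ has to be absorbed by a Neumann-series correction; since $E$ carries an extra positive power of $x$, each iterate shifts the boundary index set upward, and Borel summation produces a polyhomogeneous remainder whose leading left-face behavior still coincides with that of $R_{00,\ell}$. Granted this, standard push-forward of a polyhomogeneous kernel against the weighted $L^2$ function $u$ — with convergence of the defining integral guaranteed precisely by the choice of parametrix matched to the weight $c$ — shows that $R_\ell u$, and hence $u$ itself, is polyhomogeneous, with index set $E$ equal to the left-face index set of $R_\ell$.

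It remains to verify the bound $\gamma > c+1$. Reading off from \eqref{diffG00}: for $c < -2$ the left-face index set of $R_{00,\ell}$ starts at $-1$, and $-1 > c+1$; for $-2 < c < -1$ it starts at $0 > c+1$; for $c > -1$ the left-face index set is empty and the bound is vacuous. Every higher element of each index set is strictly larger than the leading exponent, so every pair $(\gamma,k) \in E$ indeed satisfies $\gamma > c+1$. The main technical point, and the only place where care is really required, is confirming that the cross-term correction does not introduce exponents below the threshold just identified; this reduces to checking that the index shifts induced by the $x$-power in $E$ respect the gap between $c+1$ and the leading exponents of $R_{00,\ell}$, which follows from the fact that each Neumann iterate gains at least one power of $x$ at $\del M$.
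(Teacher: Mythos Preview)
Your proposal is correct and follows essentially the same approach as the paper: apply the left parametrix to reduce to $u = -R_\ell u$, then read off polyhomogeneity and the index-set bound from the Schwartz-kernel structure of the remainder. The paper's proof is only two sentences and invokes the principle that the allowable exponents are exactly the indicial roots $\gamma$ for which $x^\gamma \in x^c L^2$ (i.e., $\gamma > c+1$), whereas you spell out the case analysis from \eqref{diffG00} and track the cross-term correction explicitly; these are the same argument at different levels of detail.
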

\begin{proof} Applying the left parametrix $G_\ell$ to the equation $\LapTY u = 0$ gives that $u =- R_\ell u \in \calA_{\phg}$.
The exponents that can arise in the index sets are those stemming from indicial roots which lie in $x^c L^2$.  
\end{proof}

\section{$L^{2}$ cohomology}
As a first application of these analytic tools, we determine the space of $L^2$ harmonic forms for any ALH* gravitational instanton.  

The space $L^2\calH^k(M,g)$ of $L^{2}$ harmonic forms on any complete manifold $(M,g)$ is isomorphic to the {\it reduced} $L^2$ cohomology
\[
\overline{L^2H}^{\, k}(M,g) = \{\omega \in L^2\Omega^k(M): d\omega = 0\} \big/ \, \overline{\{ d\eta: \eta \in \calC^\infty_0\Omega^{k-1}(M)\}},
\]
and thus depends only on the quasi-isometry class of the metric $g$.  This implies that we may assume that 
\[
g_{\TY} = g_{\GH}
\]
in a neighborhood of $\del M$. This simplifies various computations below, though the various decay estimates, etc., are all valid
for a Tian-Yau metric which is only asymptotic to this model. 

The key difficulty is that if the range of $d$ on $L^2\Omega^{k-1}$
is not closed, then the usual tools of localization, e.g.\ Mayer-Vietoris and variants of the Poincar\'e lemma, are not directly applicable. 
The alternate strategy is one employed by Zucker and later in settings close to the present one in \cite{HHM}. It  
involves first establishing an isomorphism of the space of $L^2$ harmonic forms with a certain {\it weighted} cohomology space
where the range of $d$ is closed, and then localizing using the sheaf-theoretic characterization of intersection cohomology to equate this
weighted cohomology with an intersection cohomology space for a particular perversity $\mathfrak p$ which we finally identify with
a more familiar cohomology space. 

\subsection{The Hodge-de Rham operator}
We first calculate the expression for the operator $D = d + \delta$ relative to a particular coframe near the boundary.

First define the orthonormal (with respect to $g_{\GH}$) frame field
\[
X_{1}=x^{5/2} \partial_{x}, \ X_{2}=x^{1/2}\partial_{y_{1}}, \ X_{3}=x^{1/2} (\partial_{y_{2}}-y_{1}\partial_{\theta}), \ X_{4}=x^{-1/2}\partial_{\theta};
\]
and dual coframe field
\begin{equation}
e_{1}=x^{-5/2}dx, \ e_{2}=x^{-1/2}d{y_{1}}, \ e_{3}= x^{-1/2}dy_{2}, \ e_{4}=x^{1/2}(d\theta+y_{1}dy_{2}).
\label{1forms}
\end{equation}
We then decompose any form of even degree as 
\begin{equation}\label{e:evenform}
f_{0} + \ f_{ij} e_{i} \wedge e_{j} + \ f_{1234} e_{1}\wedge e_{2}\wedge e_{3} \wedge e_{4},
\end{equation}
while a form of odd degree takes the form
\begin{equation}\label{e:oddform}
f_{i} e_{i} + \ f_{ijk} e_{i }\wedge e_{j} \wedge e_{k}.
\end{equation}
Any $k$-form decomposes as
\begin{equation}\label{e:formsplit}
\omega = \frac{dx}{x^{5/2}} \wedge \frac{\alpha}{x^{(k-2)/2}}\wedge e_{4} + \frac{dx}{x^{5/2}} \wedge \frac{\tilde \alpha}{x^{(k-1)/2}} 
+  \frac{\beta}{x^{(k-1)/2}} \wedge e_{4}+ \frac{\tilde \beta}{x^{k/2}},
\end{equation}
where $\alpha$, $\tilde{\alpha}$, $\beta$ and $\tilde{\beta}$ involve only the forms $e_2$ and $e_3$. 
We write $D$ as a matrix acting on these coefficients $(f_*)$ from~\eqref{e:evenform} and~\eqref{e:oddform}.

The exterior differential $d$ splits as %into three terms %differentiation with respect to each of the three distinguished directions
\[
d=d_{x} + d_{B} + d_{F},
\]
where, by slight abuse of notation, $d_{x}=e_{1}\wedge\partial_{X_{1}}$, $d_{F}=e_{4}\wedge \partial_{X_{4}}$, and 
$d_{B}=e_{2}\wedge \partial_{X_{2}} + e_{3}\wedge\partial_{X_{3}}$ is the differential in the horizontal subspace.
There is a corresponding splitting of the codifferential $\delta$ as
\[
\delta=\delta_{x} + \delta_{B} + \delta_{F}.
\]

As observed by Bismut and Cheeger \cite{BismutCheeger}, the fact that the Levi-Civita connection for the family of metrics $g_x = 
g|_{Y \times \{x\}}$ does not preserve the horizontal and vertical subspaces of $TY$ complicates some computations, so it is
more convenient to introduce a new connection which does preserve these subspaces. We review this briefly, referring to
\cite{HHM} for more details. We follow the notation from that paper.

The idea is to add some new terms to the Levi-Civita connection which involve the second fundamental forms of the fibers and the
curvature of the horizontal distribution.  %We compute these quantities now to keep track of the $x$ rescaling.
These vertical and horizontal distributions are spanned by $U=x^{-1/2}\partial_{\theta}$ and $\tilde X_{2}=x^{1/2}\partial_{y_{1}},\tilde X_{3}=
x^{1/2}(\partial_{y_{2}}-y_{1}\partial_{\theta})$, respectively.  By definition, the second fundamental form of the fibers is 
\[
\mathbb{II}_{\tilde X_{i}}(U,U)=\langle \nabla^{L}_{U}U, \tilde X_{i} \rangle, \ i=2,3,
\]
but since $\nabla^{L}_{U}U=x^{-1}\nabla_{\partial_{\theta}} {\partial_{\theta}}=-\frac{x^{4}}{2}\partial_{x}$, we have that
\[
\mathbb{II}_{\tilde X_{2}}(U,U) = \mathbb{II}_{\tilde X_{3}}(U,U) = 0.
\]
Next, the curvature of the horizontal distribution is 
$$
\mathcal{R}(\tilde X_{i}, \tilde X_{j})=P^{V}([\tilde X_{i}, \tilde X_{j}]),
$$
where $P^V$ is the projection onto the vertical tangent bundle and off the horizontal distribution. The only commutator with nontrivial vertical component is 
\[
[\tilde X_{2}, \tilde X_{3}]=-x\partial_{\theta},
\]
whence 
\begin{equation}
\mathcal{R}(\tilde X_{2}, \tilde X_{3})=-\mathcal{R}(\tilde X_{3}, \tilde X_{2})=-x^{3/2}U; \ \mathcal{R}(\tilde X_{i}, \tilde X_{j})=0 \text{ all other } i, j.
\label{curvR}
\end{equation}
% Finally, 
% \[
% \mathbb{II} (e_{4})=0
% \]
% and 
% \[
% \mathcal{R}(e_{4})=-2x^{3/2} \frac{dy_{1}}{x^{1/2}} \wedge \frac{ dy_{2}}{x^{1/2}}=-2 x^{3/2} e_{2}\wedge e_{3}.
% \]
We can thus rewrite
$$
d_{Y}=d_{F} + \tilde d_{B} - \frac12 \mathcal{R}
$$
and 
\[
\delta_Y = \delta_F + \tilde{\delta}_B - \frac12 \mathcal{R}^*,
\]
where $\tilde{d}_B$ denotes the lift of $d_{\bT^2}$ to the horizontal distribution and $\tilde{\delta}_B$ is its adjoint.  To simplify
notation, we set $R = -\frac12 \mathcal R$ below.

According to the splitting in \eqref{e:formsplit}, the action of $D = D_{\GH} =d+\delta$ on $\Omega^k$ takes the form
\begin{equation}\label{e:Matrix2}
\begin{pmatrix}
x^{1/2}\tilde D_{B} & x^{-1/2}d_{F}+  x^{3/2} R^{*} & -\frac{k-2}{2} x^{3/2} + x^{5/2}\partial_{x} & 0 \\
x^{-1/2} \delta_{F} + x^{3/2} R&  x^{1/2} \tilde D_{B} & 0 & -\frac{k}{2} x^{3/2} + x^{5/2} \partial_{x} \\
-\frac{4-k}{2} x^{3/2} + x^{5/2}\partial_{x} &0&  x^{1/2} \tilde D_{B} & x^{-1/2}d_{F}+ x^{3/2} R^{*}\\
0 & -\frac{2-k}{2} x^{3/2}+ x^{5/2}\partial_{x} & x^{-1/2}\delta_{F} + x^{3/2} R & x^{1/2} \tilde D_{B}
\end{pmatrix}
\end{equation}
Observe that $x^{1/2} D_{\GH}$ is an elliptic $\bfa$-operator.  It decomposes into components acting on the ranges of the various projectors
$\Pi_\theta$, $\Pi_\theta^\perp$, $\Pi_y$ and $\Pi_y^\perp$.  Exactly as for the scalar operator detailed in the last section, an inspection of \eqref{e:Matrix2}
leads to the following conclusions:
\begin{proposition}
The component $\calL_{\perp *}$ of $x^{1/2}D_{\GH}$ on the range of $\Pi_\theta^\perp$ is a fully elliptic $\bfa$-operator.  Similarly,
the component $\calL_{0 \perp}$ of $x^{-1/2} D_{\GH}$ on the range of $\Pi_y^\perp \Pi_\theta$ is a fully elliptic $c$-operator.  The 
corresponding parametrices satisfy $G_{\perp *, \ell/r} = \tilde{G}_{\perp *, \ell/r} \tilde{x}^{1/2} \in \Psi_{\bfa}^{-1, 1/2}$ and
$G_{0 \perp, \ell/r} = \tilde{G}_{0 \perp, \ell/r} \tilde{x}^{-1/2} \in \Psi_c^{-1, -1/2}$, and their remainder terms are rapidly decaying at
all boundaries of $M^2$.
\end{proposition}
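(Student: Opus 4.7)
The plan is to mimic the scalar parametrix construction from the previous section, checking at each step that the matrix form \eqref{e:Matrix2} of $D_{\GH}$, after multiplication by $x^{1/2}$ (resp.\ $x^{-1/2}$), produces a fully elliptic operator in the $\bfa$-calculus (resp.\ $c$-calculus) when restricted to the appropriate fiber-harmonic subspace.

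First, I verify that $x^{1/2} D_{\GH}$ lies in $\Diff^1_{\bfa}(M;\Lambda^*)$. Inspecting the matrix \eqref{e:Matrix2} entry by entry, each off-diagonal coefficient is smooth in the generating $\bfa$-vector fields $x^3\del_x, x\del_{y_1}, x(\del_{y_2} - y_1\del_\theta), \del_\theta$ once one keeps careful track of the factors of $x^{1/2}$ coming from the orthonormal coframe \eqref{1forms} together with the extra prefactor $x^{1/2}$; the curvature term $R$ contributes a smooth zeroth-order multiplier. The principal $\bfa$-symbol is the square root of the principal symbol of $x \LapTY$ (acting on forms), so $\bfa$-ellipticity follows from the scalar case.

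Next, I identify the relevant components. Writing $D_{\GH} = \Pi_\theta^\perp D_{\GH} \Pi_\theta^\perp + \Pi_\theta D_{\GH} \Pi_\theta + E_1$ and then further splitting the $\Pi_\theta$-piece with $\Pi_y, \Pi_y^\perp$ exactly as in \eqref{e:proj}, I get summands $D_{\perp *}, D_{0\perp}, D_{00}$ plus cross terms that decay at $\del M$. The first component, multiplied by $x^{1/2}$, becomes $\calL_{\perp*} \in \Diff^1_{\bfa}$; the second, multiplied by $x^{-1/2}$, becomes $\calL_{0\perp} \in \Diff^1_c$ acting on sections of $\calH_\theta$ over $[0,\epsilon_0)\times \bT^2$. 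Full ellipticity then reduces to invertibility of the operators on the front faces of $M^2_{\bfa}$ and $M^2_c$. For $\calL_{\perp*}$, the front face model is obtained by setting $\tilde x = 0$ in the $(S,Y)$ projective coordinates; \eqref{e:Matrix2} yields a constant coefficient first-order system of the form $\Gamma_0 \del_S + \Gamma_1 \del_{Y_1} + \Gamma_2 \del_{Y_2} + A \, \del_\theta$ on $\RR^3 \times \bS^1$, where the $\Gamma_j$ are the Clifford multiplication matrices. Its square is $\del_S^2 + \Delta_Y + \Pi_\theta^\perp \del_\theta^2 \Pi_\theta^\perp$, which is bounded below by $1$ on the range of $\Pi_\theta^\perp$ and hence invertible on any polynomially weighted $L^2$ space; so the model operator itself is invertible, giving full ellipticity. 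The argument for $\calL_{0\perp}$ is analogous: the front-face model in the projective coordinate $s'$ is a constant coefficient Dirac-type operator whose square is $\del_{s'}^2 + \Pi_y^\perp \Pi_\theta \Delta_{\bT^2} \Pi_\theta \Pi_y^\perp$, invertible by Fourier decomposition on $\bT^2$.

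Granted full ellipticity, the parametrix construction from \cite{GH2} applies verbatim in each calculus: build a symbolic right inverse supported near the lifted diagonal, correct on the front face by solving the invertible model problem, then remove the remaining first-order vanishing via a Neumann series using the composition theorems. This produces $\tilde G_{\perp*, \ell/r} \in \Psi_{\bfa}^{-1,0}$ and $\tilde G_{0\perp, \ell/r} \in \Psi_c^{-1,0}$ with remainder terms in $\dot\calC^\infty(M^2)$. Finally, undoing the conformal prefactors by setting $G_{\perp*, \ell/r} = \tilde G_{\perp*, \ell/r}\, \tilde x^{1/2}$ and $G_{0\perp, \ell/r} = \tilde G_{0\perp, \ell/r}\, \tilde x^{-1/2}$ places these parametrices in the claimed classes $\Psi_{\bfa}^{-1,1/2}$ and $\Psi_c^{-1,-1/2}$.

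The main obstacle is the verification of full ellipticity of the front-face models, since the Hodge--de~Rham operator mixes form degrees and the cross-terms in \eqref{e:Matrix2} (involving $d_F, \delta_F$ and the curvature $R$) do not obviously decouple on the range of $\Pi_\theta^\perp$ or $\Pi_y^\perp \Pi_\theta$. The cleanest route is to pass to the square and use that $(x^{1/2}D_{\GH})^2$ differs from $x \LapTY$ (extended as a Hodge Laplacian to forms) only by curvature terms of lower $\bfa$-order, so invertibility of the model on each isotypic component follows from the scalar analysis already established, with the positivity of $-\Pi_\theta^\perp \del_\theta^2 \Pi_\theta^\perp$ and $-\Pi_y^\perp \Pi_\theta \Delta_{\bT^2}\Pi_\theta\Pi_y^\perp$ doing the work.
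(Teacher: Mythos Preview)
Your proposal is correct and follows essentially the same approach as the paper, which simply asserts the result ``by inspection of \eqref{e:Matrix2}'' and by analogy with the scalar construction in the previous section; you have filled in the details the paper leaves implicit. In particular, your device of passing to the square and noting that the curvature contributions $x^{3/2}R$, $x^{3/2}R^*$ become $x^2R$, $x^2R^*$ after the $x^{1/2}$ prefactor---hence vanish at $\ff_{\bfa}$---is exactly the right way to justify that the front-face model reduces to the scalar one on each form component, and the conformal adjustments by $\tilde x^{\pm 1/2}$ mirror the $\tilde x^{\pm 1}$ factors in Propositions~1 and~2.
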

\begin{corollary}
If $\omega \in L^2 \calH^k(M, g_{\GH})$, then the components $\Pi_\theta^\perp \omega$ and $\Pi_y^\perp \Pi_\theta \omega$ decay
rapidly at $\del M$. 
\end{corollary}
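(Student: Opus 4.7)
The plan is to use the left parametrices from the preceding proposition to solve the two projected components of $D_{\GH}\omega = 0$ up to errors with rapidly decaying Schwartz kernel. Since $L^2$ harmonic forms are a quasi-isometric invariant, I will first reduce to the case $g_{\TY} = g_{\GH}$ on a collar neighborhood $\calU = [0,\epsilon_0) \times Y$ of $\del M$, so that $D_{\GH}\omega = 0$ holds on $\calU$.

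The main preliminary step is to verify that $D_{\GH}$ commutes with $\Pi_\theta$, and that its restriction to the range of $\Pi_\theta$ further commutes with $\Pi_y$. Inspection of \eqref{e:Matrix2} shows that every entry is assembled from $\partial_x$, $\partial_\theta$, $\tilde{X}_2 = x^{1/2}\partial_{y_1}$, $\tilde{X}_3 = x^{1/2}(\partial_{y_2} - y_1\partial_\theta)$, the curvature term $\mathcal{R}$, and smooth functions of $x$ alone. Each of these commutes with the fiberwise averaging $\Pi_\theta$; the only nontrivial case is $y_1\partial_\theta$, which is handled by $\Pi_\theta(y_1\partial_\theta f) = y_1\Pi_\theta\partial_\theta f = 0 = y_1\partial_\theta\Pi_\theta f$. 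On the range of $\Pi_\theta$ the $\partial_\theta$ and $\mathcal{R}$ contributions vanish, $\tilde{X}_3$ collapses to $x^{1/2}\partial_{y_2}$, and the remaining ingredients are $\partial_x$, $\partial_{y_i}$ and multipliers in $x$, all of which commute with the base averaging $\Pi_y$. Consequently, throughout $\calU$,
\[
\calL_{\perp *}(\Pi_\theta^\perp \omega) = 0 \quad \text{and} \quad \calL_{0\perp}(\Pi_y^\perp \Pi_\theta \omega) = 0.
\]

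Next I would pick a cutoff $\chi \in \calC^\infty(M)$ supported in $\calU$ with $\chi \equiv 1$ near $\del M$, and set $u_1 := \chi \Pi_\theta^\perp \omega \in L^2$. Then $\calL_{\perp *} u_1 = x^{1/2}\Pi_\theta^\perp[D_{\GH},\chi]\omega =: f_1$, which is smooth and compactly supported in $\mathrm{int}\, M$. Applying the left parametrix gives
\[
u_1 = G_{\perp *, \ell} f_1 - R_{\perp *, \ell} u_1.
\]
The second term lies in $\dot{\calC}^\infty(M)$ because $R_{\perp *, \ell}$ has Schwartz kernel in $\dot{\calC}^\infty(M^2)$; the first term also vanishes to infinite order at $\del M$ because $f_1$ is supported away from $\del M$ and the Schwartz kernel of $G_{\perp *, \ell}$ decays rapidly at the left boundary face of $M^2_\bfa$ (away from the front face). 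Therefore $\Pi_\theta^\perp \omega$ decays rapidly at $\del M$. The identical argument, with $G_{0\perp, \ell} \in \Psi_c^{-1, -1/2}$ in place of $G_{\perp *, \ell}$, delivers the same conclusion for $\Pi_y^\perp \Pi_\theta \omega$.

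The principal obstacle is the commutation check: once $D_{\GH}$ is shown to split through $\Pi_\theta$ and, on its range, through $\Pi_y$, the parametrix machinery from the preceding proposition closes the argument essentially immediately.
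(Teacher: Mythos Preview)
Your proposal is correct and follows essentially the same route as the paper's (implicit) argument: apply the left parametrices from the preceding proposition to the projected equations, and use the rapid decay of both the remainder kernels and of the parametrix Schwartz kernels at the left face (away from $\ff_{\bfa}$, resp.\ $\ff_c$) when paired against the compactly supported commutator term. Your explicit verification that $D_{\GH}$ commutes with $\Pi_\theta$, and that $\Pi_\theta D_{\GH}\Pi_\theta$ commutes with $\Pi_y$, is a detail the paper leaves unstated but which is indeed needed; your justification via $\bS^1$-invariance of the model metric (and then $\bT^2$-invariance of the reduced operator once the $y_1\partial_\theta$ term drops out) is the right one. One small notational point: as written, $G_{\perp*,\ell}$ is the parametrix for $D_{\perp*}$ rather than for $\calL_{\perp*}=x^{1/2}D_{\perp*}$, so either replace $f_1$ by $x^{-1/2}f_1$ or use $\tilde G_{\perp*,\ell}$ instead; since $f_1$ is supported in the interior this is harmless.
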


Finally, the component $D_{00}:=\Pi_{\theta}\Pi_{y}D\Pi_{\theta}\Pi_{y}$ satisfies
\[
x^{-3/2}D_{00}= \calL_{00} = 
\begin{pmatrix}
0 &R^{*} & -\frac{k-2}{2}  + x\partial_{x} & 0 \\
 R& 0& 0 & -\frac{k}{2} + x  \partial_{x} \\
-\frac{b-k+2}{2}  + x\partial_{x} &0&  0 &R^{*}\\
0 & -\frac{b-k}{2}+ x \partial_{x} &R &0
\end{pmatrix}.
\]
Note  that $R$ and $R^*$ contribute terms of the same order as the principal part, and hence must be taken into account in the
computation of indicial roots. Inserting the expressions from \eqref{curvR}, then using the notation~\eqref{e:evenform} and~\eqref{e:oddform} 
for $\omega = \omega_{even} + \omega_{odd}$, 
\begin{equation}\label{eq:D00}
\begin{aligned}
x^{-3/2} & D_{00} (\omega_{\mathrm{even}})=x\del_{x}f_{0} \, e_{1}  - x\del_{x} f_{12} \, e_{2}  -x\del_{x} f_{13} \, e_{3} \\
&+ \left(-(x\del_x-1) f_{14} + f_{23}\right)  e_{4} +\left((x\del_x-1) f_{23}- f_{14}\right) e_{1}\wedge e_{2} \wedge e_{3}\\
&+ x\del_{x} f_{24} \, e_{1}\wedge e_{2}\wedge e_{4} + x\del_{x}  f_{34} \, e_{1}\wedge e_{3}\wedge e_{4}\\ 
&- x\del_{x}  f_{1234} \, e_{2}\wedge e_{3}\wedge e_{4}.
\end{aligned}
\end{equation}
This mostly decouples, so it is straightforward to see that this has indicial root $0$ for $f_{0}, f_{12}, f_{13}, f_{24}, f_{34}, f_{1234}$, and indicial roots
$0$ and $2$ for $f_{14}, f_{23}$. In particular they satisfy $f_{14}=-f_{23}$ for indicial root 0 and $f_{14}=f_{23}$ for indicial root 2.  Similarly, 
\begin{equation}
\begin{aligned}
x^{-3/2}&D_{00}(\omega_{\mathrm{odd}})=-(x\del_{x}-\tfrac12)f_{1} \\
&  +(x\del_{x}-\tfrac12 )f_{2} \, e_{1}\wedge e_{2} +(x\del_x-\tfrac12 )f_{3} \, e_{1}\wedge e_{3}\\
&+ \left( (x\del_x+\tfrac12 ) f_{4} - f_{123}\right) \, e_{1}\wedge e_{4} +  \left( -(x\del_x+\tfrac12 )f_{123} + f_{4}\right) \, e_{2}\wedge e_{3}\\
&  -(x\del_x-\tfrac12 )f_{124} \, e_{2}\wedge e_{4} -(x\del_x-\tfrac12)f_{134} \, e_{3}\wedge e_{4}\\
& + (x\del_x-\tfrac12)f_{234} \, e_{1}\wedge e_{2}\wedge e_{3}\wedge e_{4},
\end{aligned}
\end{equation}
which has indicial root $1/2$ for $f_{1}, f_{2}, f_{3}, f_{124}, f_{134}, f_{234}$ and $-3/2$, $1/2$ for $f_{4}, f_{123}$. 
Altogether, $\{ -\frac{3}{2}, 0, \frac{1}{2}, 2\}$ constitutes the full set of indicial roots. 
%where $0$ and $1/2$ have multiplicity.

% This set of indicial roots for $x^{-3/2} D_{00}$ is symmetric with respect to $1/4$. This midpoint may seem unexpected, but can be explained
% as follows. The $b$-operator
% $$
% \tilde D_{00}=x^{-3/4}D_{00} x^{-3/4} 
% $$
% is self-adjoint with respect to the volume form $dV= x^{-3} dxdy_{1}dy_{2}d\theta$, and for this measure, the $L^2$ cutoff  (i.e., the exponent
% for which $x^\gamma$ just fails to lie in $L^2$) is $\gamma_c =1$.  Now, $\tilde \alpha$ is an indicial root for $\tilde D_{00}$ if 
% $\tilde D_{00}(x^{\tilde \alpha}) = (x^{-3/4}D_{00}) (x^{\tilde \alpha-3/4})=\calO(x^{\tilde \alpha+1})$, hence
% $(x^{-3/2} D_{00}) (x^{\tilde \alpha-3/4})=\calO(x^{\tilde \alpha+1/4})$. In other words, $\alpha:=\tilde \alpha-3/4$ is an indicial root 
% for $x^{-3/2}D_{00}$.   Altogether, the indicial root set for $\tilde D_{00}$ is $\{-3/4, 3/4, 5/4, 11/4\}$, which is indeed symmetric 
% around the $L^{2}$ cutoff $\gamma_c$.  

Constructing left and right parametrices $\tilde{G}_{00, \ell/r}$ for $x^{-3/2}D_{00}$ in the $b$-calculus, we obtain parametrices
$G_{00, \ell/r} = \tilde{G}_{00, \ell/r} \tilde{x}^{-3/2}$ for $D_{00}$ itself. Adding these to the parametrices for the other components
to obtain the full parametrices $G_{\ell/r}$, we obtain the main analytic result:
\begin{proposition}\label{prop:dirac}
Suppose that $c$ is a nonindicial weight, i.e., not equal to $\gamma_j - 1$ for any of the indicial roots for $x^{-3/2} D_{00}$ listed above.
Then %there exist left and right parametrices $G_{\ell/r}$ for $\Delta_{\GH}$ 
\begin{multline*}
G_{\ell/r} :  x^{c+3/2} \Pi_y \Pi_\theta H_{\bfa}^{\sigma}\Omega^{*}(M, dV_{\GH})  \oplus x^{c+1/2} \Pi_y^\perp \Pi_\theta H_c^{\sigma} \Omega^*(M, dV_{\GH}) \\
\oplus x^{c-1} \Pi_\theta^\perp H^{\sigma}_b \Omega^*(M, dV_{\GH}) \longrightarrow x^{c}H_{\bfa}^{\sigma+1}\Omega^{*}(M, dV_{\GH})). 
\end{multline*}
The remainders $R_{\ell/r}$ are polyhomogeneous on $M^2$, with index sets $E_{10}$ and $E_{01}$ at the left and right faces of this product space.
These index sets are determined by the component of $\RR \setminus \{\mathrm{indicial roots} \}$. 
%$$
%\Pi_{\theta}\Pi_{y}x^{\infty}\Psi_{\XX}^{-\infty}(M) \oplus \Psi^{-\infty, E_\ell, E_r}(M).
%$$
%The final space on this last line consists of smoothing operators on $M$ with Schwartz kernels which are polyhomogeneous on the simple
%product $M^2$ (without any blowups), for some index sets $E_\ell$ and $E_r$ at the two boundary faces $\del M \times M$ and $M \times \del M$.

If $\omega$ is a harmonic form in $x^c L^2\Omega^*(M, dV_{\GH})$, then $\omega$ is polyhomogeneous with exponents coming from
the indicial roots with $\gamma_j > c + 1$.  In particular, if $c = 0$, then only the harmonic forms with initial exponent $2$ arise.
\label{Darametrix}
\end{proposition}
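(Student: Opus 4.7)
The plan is to mimic step-by-step the parametrix construction carried out for the scalar Laplacian in Theorem~\ref{thm:main}, now applied to the matrix operator $D_{\GH}$. First I would employ the projector decomposition
\[
D_{\GH} = D_{\perp *} + D_{0\perp} + D_{00} + E,
\]
where $E$ collects the cross-terms between ranges of the projectors. Since the reduction to $g_{\TY} = g_{\GH}$ near $\del M$ has already been made for this section, the boundary fibration is honest and $E$ vanishes to arbitrary order in $x$, so it can be absorbed at the end by a Neumann iteration.

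The three principal summands are then handled in parallel with their scalar analogues. For the $\bfa$- and $c$-components $\calL_{\perp *} = x^{1/2} D_{\perp *}$ and $\calL_{0\perp} = x^{-1/2} D_{0\perp}$, the fully elliptic construction of \cite{GH2} applies: the ``mass'' operators $\Pi_\theta^\perp \del_\theta^2 \Pi_\theta^\perp$ and $\Pi_y^\perp \Delta_{\bT^2} \Pi_y^\perp$ are strictly negative, so the three-step symbolic/normal/Neumann scheme produces parametrices whose remainders vanish rapidly at every boundary face. Conjugating by the appropriate powers of $\tilde x$ yields $G_{\perp *, \ell/r} \in \Psi_{\bfa}^{-1, 1/2}$ and $G_{0\perp, \ell/r} \in \Psi_c^{-1, -1/2}$, just as recorded in the proposition stated immediately before this one.

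For the $b$-component $\calL_{00} = x^{-3/2} D_{00}$, I would invoke the set of indicial roots $\{-3/2, 0, 1/2, 2\}$ already extracted from~\eqref{eq:D00}, so the indicial weights (values of $c$ at which full ellipticity fails) are $\{-5/2, -1, -1/2, 1\}$. For $c$ in the complement, the $b$-calculus parametrix construction goes through exactly as in~\eqref{diffG00}: after a symbolic inverse near the diagonal, the front-face remainder is killed by choosing the appropriate branch of homogeneous solutions depending on the component of $\RR\setminus\{-5/2,-1,-1/2,1\}$ containing $c$, and a final Neumann series produces $\tilde G_{00, \ell/r}$ polyhomogeneous on $M^2_b$ with resulting index sets $E_{10}, E_{01}$ at the side faces. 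Conjugating by $\tilde x^{-3/2}$ gives $G_{00, \ell/r}$. Setting $G_{\ell/r} := G_{\perp *, \ell/r} + G_{0\perp, \ell/r} + G_{00, \ell/r}$ and absorbing $E$ by Neumann iteration produces parametrices whose remainders are polyhomogeneous on $M^2$ with nontrivial index sets exactly those of the $00$-component; the mapping properties follow summand by summand.

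For the regularity statement, let $\omega \in x^c L^2 \Omega^*(M, dV_{\GH})$ satisfy $D_{\GH} \omega = 0$. Applying the left parametrix yields $\omega = -R_\ell \omega$, which is polyhomogeneous on $M$ since $R_\ell$ has polyhomogeneous Schwartz kernel. The permissible exponents are those indicial roots $\gamma_j$ for which $x^{\gamma_j} \in x^c L^2(dV_{\GH})$; since $dV_{\GH} \sim x^{-3}\, dx\, dy\, d\theta$, the integrability condition near $x=0$ reduces to $\gamma_j > c+1$, and for $c=0$ this selects only $\gamma_j = 2$ from the list. The main obstacle I anticipate is careful bookkeeping of how conjugations by powers of $\tilde x$ shift the boundary index sets of the parametrices and remainders, and verifying that the $4\times 4$ coupling in $D_{00}$ is correctly diagonalized when extracting indicial roots---in particular, the $(f_{14}, f_{23})$ and $(f_4, f_{123})$ blocks produce the eigenvalues $\{0,2\}$ and $\{-3/2, 1/2\}$ respectively, and treating them as scalar equations would miss precisely the root $\gamma=2$ that carries all the $L^2$ harmonic content when $c=0$.
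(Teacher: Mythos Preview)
Your proposal is correct and follows essentially the same approach as the paper. The paper does not give a separate proof of this proposition beyond the sentence preceding it: the parametrices $\tilde G_{00,\ell/r}$ for $x^{-3/2}D_{00}$ are constructed in the $b$-calculus exactly as in the scalar case, conjugated by $\tilde x^{-3/2}$, and added to the $\bfa$- and $c$-parametrices already furnished by the earlier proposition; your outline reproduces this faithfully, including the regularity consequence $\omega = -R_\ell\omega$ and the integrability computation singling out $\gamma=2$ when $c=0$.
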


There is a corresponding regularity result:
\begin{proposition}\label{prop:poly}
Let $\omega \in x^\alpha L^2 \Omega^*(M)$ for any $\alpha \in \RR$. 
\begin{itemize} 
\item If $D\omega=0$, then $\omega \in \calA_{\mathrm{phg}}^*\Omega^*(M)$, i.e., $\omega$ is polyhomogeneous;
\item if $D\zeta=\eta$ where $\eta\in \mathcal{A}^{a}\Omega^{*}(M)$ for 
\[
\zeta\in x^{c-2}\Pi_{\theta}\Pi_{y} H^{1} \Omega^{*}_{\XX}(M)  \oplus x^{c-1}\Pi_{\theta}\Pi_{y}^{\perp} H^{1} \Omega^{*}_{\XX}(M) \oplus 
x^{c}\Pi_{\theta}^{\perp}\Pi_{y}^{\perp} H^{1}\Omega^{*}_{\XX}(M),
\]
with $c < a$, and if neither $a$ nor $c$ are indicial roots, then 
\[
\zeta \in \Pi_{\theta}\Pi_{y} \mathcal{A}^{E}_{phg}\Omega^{*}_{\XX}(M) + \mathcal{A}^{a}\Omega^{*}(M),
\]
where $E$ is the set of indicial roots in the interval $(c,a)$. 
\end{itemize}
\end{proposition}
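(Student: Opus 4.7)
The plan is to apply the left parametrix $G_\ell$ from Proposition \ref{Darametrix} to each equation and then read off the regularity of $\zeta$ (or $\omega$) from the polyhomogeneous structure of the parametrix and of its residual error $R_\ell$. Since $G_\ell D = \mathrm{I} + R_\ell$ and the Schwartz kernel of $R_\ell$ is polyhomogeneous on $M^2$ with index sets $E_{10}$, $E_{01}$ at the left and right faces dictated by the indicial roots of $D$, the composition $R_\ell u$ for any conormal $u$ is automatically polyhomogeneous in $M$, with expansion inherited from $E_{10}$.

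For the first bullet, since $D\omega = 0$, the identity $\omega = -R_\ell \omega$ holds. As $\omega \in x^\alpha L^2$ pairs against the right-face expansion of the kernel of $R_\ell$ to give a convergent integral, $R_\ell \omega$ is polyhomogeneous on $M$, whence $\omega \in \calA^*_{\phg}\Omega^*(M)$. This is the direct analogue of Corollary \ref{phgcor}.

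For the second bullet, apply $G_\ell$ to $D\zeta = \eta$ to obtain $\zeta = G_\ell \eta - R_\ell \zeta$. The term $R_\ell \zeta$ is polyhomogeneous, exactly as above. The work therefore reduces to analyzing $G_\ell \eta$ for $\eta \in \calA^a$. Away from the diagonal the kernel of $G_\ell$ is polyhomogeneous with left-face index set generated by the indicial roots $\gamma_j$ of $D$ that lie in the appropriate window relative to $c$, and on the diagonal it acts as a classical pseudodifferential operator and hence preserves conormality. Pairing the left-face expansion of the kernel (which contributes powers $x^{\gamma_j}$ together with possible logarithmic factors) against $\eta$ produces, for each indicial root $\gamma_j$ in $(c,a)$, a polyhomogeneous term $c_j(y,\theta) x^{\gamma_j}(\log x)^{k_j}$. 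All contributions from indicial roots $\gamma_j \geq a$ (together with the pseudodifferential/diagonal part) can be absorbed into a conormal remainder of order $a$, since $\eta$ already decays like $x^a$. The assumption that neither $c$ nor $a$ is an indicial root ensures that no extra log terms appear at the endpoints and that the index set $E$ of indicial roots in $(c,a)$ is exactly what is captured by this decomposition.

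The main technical obstacle is the careful bookkeeping on which indicial roots contribute: one must use the pushforward/pullback theorem for polyhomogeneous distributions on the blown-up double spaces $M^2_\bfa$, $M^2_c$, $M^2_b$ to track how the left-face index sets of $G_\ell$ and $R_\ell$ are generated by inverting the normal operator at each front face, and verify that iterating the identity $\zeta = G_\ell \eta - R_\ell \zeta$ does not introduce any indicial exponents outside of $(c,a)$ beyond what is already recorded in $\calA^a\Omega^*(M)$. Once this accounting is established, the stated decomposition $\zeta \in \Pi_\theta\Pi_y \calA^E_{\phg}\Omega^*_{\bfa}(M) + \calA^a\Omega^*(M)$ follows by combining the three contributions on the three projected subspaces with their respective weights shifted by $-2, -1, 0$ as in the hypothesis on $\zeta$.
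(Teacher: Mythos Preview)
Your proposal is correct and follows essentially the same approach as the paper: apply the left parametrix to write $\omega = -R_\ell\omega$ (first bullet) and $\zeta = G_\ell\eta - R_\ell\zeta$ (second bullet), then read off regularity from the polyhomogeneous structure of $G_\ell$ and $R_\ell$. The paper's proof is terser: it asserts directly that $G_\ell$ ``preserves conormality and order of vanishing,'' so $G_\ell\eta \in \calA^a$, and places the entire polyhomogeneous contribution with exponents in $(c,a)$ into $R_\ell\zeta$. Your more detailed accounting, which also tracks left-face contributions of $G_{00,\ell}$ acting on $\eta$, is a valid elaboration of the same bookkeeping and leads to the same decomposition; which term carries the indicial-root contributions depends on the weight at which the parametrix is constructed, but the sum is the same. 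One caveat the paper includes that you omit: the argument as written assumes the cross-terms of $D$ (mixing the $\Pi_\theta, \Pi_y$ ranges) vanish to infinite order; otherwise an iteration is needed for the first bullet, and the second bullet only holds when $a-c$ does not exceed the order of vanishing of those cross-terms.
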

\begin{proof}
If $D\omega=0$, then applying the left parametrix gives $\omega = G_{\ell}D\omega - R_{\ell}\omega = - R_\ell \omega$.  Using the
structure of the remainder operators as explained in Proposition~\ref{prop:dirac}, we obtain that
$R_{\ell}\omega \in \calA_{\mathrm{phg}}^{E_\ell}(M)$.

Next, if $D\zeta=\eta \in \calA^{a}\Omega^{*}(M)$, then $\zeta =G_{\ell}\eta - R_{\ell}\zeta$. We invoke the fact that $G_\ell$ preserves conormality
and order of vanishing, so the first term on the right lies in $\calA^a \Omega^*$, while the second term is polyhomogeneous, with exponents all
strictly greater than $c$. 

We have argued here assuming that the cross-terms in the expression for $D$ above vanish to infinite order. If they do not, then an iteration
to successively improve the regularity of $\omega$ still leads to the polyhomogeneity of solutions to $D\omega = 0$. On the other hand,
the second result is valid only if $a-c$ is no larger than the order of vanishing of these cross-terms. 
\end{proof}

\subsection{Hodge theory for ALH* spaces}
We now return to a consideration of the Hodge cohomology space $L^2 \mathcal H^k(M, g) = \{\omega \in L^2(M,g): d\omega = \delta \omega = 0\}$,
using the strategy outlined in the preamble to this section. 

%As is well-known, this is the same as the reduced $L^2$ cohomology, which is the quotient of the space of closed $L^2$ forms by the {\it closure} of the 
%space $\{d\eta: \eta \in L^2 \Omega^{k-1}, d\eta \in L^2 \Omega^k\}$.  It is precisely the passage to this closure which makes the computation
%of this Hodge cohomology more difficult.  Indeed, there are a number of sheaf-theoretic and other topological tools which make the computation 
%of unreduced $L^2$ cohomology much more tractable, but if the range of $d$ is not closed, then the quotient is infinite dimensional (and
%non-Hausdorff). 
%As explained in the preamble of this section, we first identify this with a certain weighted $L^2$ cohomology space, chosen so that
%range of $d$ is closed, and then computing this by a sheaf-theoretic localization argument. 

%To get started on this, 
%note that the expression for $d$ with respect to the coframe $e_1, e_2, e_3, e_4$ is of the form $x^{3/2}$ times a 
%$b$-operator. \rafe{Is this true?}  Because of this, we define, for any degree $k$, the Hilbert complex
%$$
%\dots \overset{d}{\longrightarrow} x^{a-\frac{3}{2}}L^{2}\Omega^{k-1} \overset{d}{\longrightarrow} x^{a} L^{2}\Omega^{k} \overset{d}{\longrightarrow} 
%x^{a+\frac{3}{2}}L^{2}\Omega^{k+1} \overset{d}{\longrightarrow} \dots.
%$$
%This in turn, and again 
We begin, following \cite{HHM}, by defining the family of weighted de Rham cohomology spaces
\[
\WH^{k}(M,g,a) =\frac{\{\omega \in x^{a}L^{2}\Omega^{k}: d\omega=0\}}
{\{d\eta: \eta \in x^{a-\frac{3}{2}}L^{2}\Omega^{k-1}, d\eta \in x^{a}L^{2}\Omega^{k}\}}
\]

\begin{theorem}\label{Th:weighted}
For $\epsilon > 0$ sufficiently small, there is an isomorphism
$$
\Phi: L^{2}\mathcal{H}^{k}(M,g) \longrightarrow  \text{Im}\left(\WH^{k} (M, g, \epsilon) \rightarrow \WH^{k} (M,g, -\epsilon)\right).
$$
\end{theorem}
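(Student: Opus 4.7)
The plan is to construct $\Phi$ directly and invert it using the parametrices of Proposition~\ref{prop:dirac} together with a weighted Hodge decomposition. To define $\Phi$, apply Proposition~\ref{prop:poly}: any $\omega \in L^2 \calH^k(M,g)$ is polyhomogeneous, and by the last statement of Proposition~\ref{prop:dirac} its leading asymptotic exponents come only from indicial roots of $x^{-3/2}D_{00}$ that are strictly greater than $-1$. For the $\Pi_y\Pi_\theta$ component the smallest such root is $2$, while the off-range components decay rapidly, so $\omega \in x^{\epsilon} L^2 \Omega^k$ for all sufficiently small $\epsilon > 0$. Then $[\omega]$ is well-defined in $\WH^k(M,g,\epsilon)$, and $\Phi([\omega]):=[\omega]_{-\epsilon}$ automatically lies in the image of the comparison map $\WH^k(M,g,\epsilon) \to \WH^k(M,g,-\epsilon)$.

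For injectivity, suppose $\Phi(\omega) = 0$, so $\omega = d\eta$ with $\eta \in x^{-\epsilon-3/2}L^2\Omega^{k-1}$. Picking a cutoff $\chi_R(x)$ equal to $1$ for $x \ge 2/R$ and vanishing for $x \le 1/R$, compute
\begin{equation*}
\langle \chi_R \omega, \omega\rangle = \langle \chi_R d\eta, \omega\rangle = \langle d\chi_R \wedge \eta, \omega\rangle + \langle \chi_R \eta, \delta\omega\rangle.
\end{equation*}
The second summand vanishes since $\delta\omega = 0$. For the shell integral $\langle d\chi_R \wedge \eta, \omega\rangle$, the bound $|d\chi_R|_g \lesssim R\, x^{5/2} \lesssim R^{-3/2}$ together with the decay $\omega = O(x^2)$ coming from polyhomogeneity, the weighted $L^2$ bound on $\eta$, and Cauchy--Schwarz on the shell $\{1/R \le x \le 2/R\}$ shows that this contribution vanishes as $R \to \infty$. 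Hence $\|\omega\|_{L^2}^2 = 0$.

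For surjectivity, start with a closed $\omega \in x^\epsilon L^2 \Omega^k$ and apply the left parametrix for $D = d+\delta$ at the non-indicial weight $\epsilon$ from Proposition~\ref{prop:dirac}. Fredholmness furnishes a decomposition $\omega = D\zeta + \omega_H$, where $\omega_H$ lies in the finite-dimensional cokernel, identified with $L^2 \calH^k$ via the $L^2$ pairing. Writing $\zeta = \zeta_{k-1}\oplus \zeta_{k+1}$, we get $\omega - \omega_H = d\zeta_{k-1} + \delta\zeta_{k+1}$. Applying $d$ and using $d\omega = d\omega_H = 0$ forces $d\delta\zeta_{k+1} = 0$, and the same cutoff integration-by-parts argument applied to $\delta\zeta_{k+1}$ yields $\|\delta\zeta_{k+1}\|_{L^2}^2 = 0$. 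Therefore $\omega - \omega_H = d\zeta_{k-1}$, and the mapping properties of $G_\ell$ place $\zeta_{k-1}$ in $x^{-\epsilon-3/2}L^2\Omega^{k-1}$, as required.

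The hardest step is the precise identification of the Fredholm cokernel of $D$ at weight $\epsilon$ with $L^2 \calH^k$: one must track which indicial roots of $x^{-3/2}D_{00}$ fall in the window determined by the weights $\pm\epsilon$ and rule out phantom kernel elements at weight $-\epsilon$ that do not extend to genuine $L^2$ harmonic forms. The decoupling in \eqref{eq:D00} and the explicit list $\{-3/2, 0, 1/2, 2\}$ of indicial roots are essential here. A secondary subtlety is the weighted bookkeeping, because the three components ($\bfa$-, $c$-, $b$-) in the decomposition of $D$ carry different weight shifts; one must also verify that the cross-terms and the asymptotically small difference $g_{\TY} - g_{\GH}$ feed into the Neumann series in the parametrix construction without disturbing the indicial analysis.
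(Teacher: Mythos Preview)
Your proposal follows essentially the same three-step architecture as the paper: well-definedness via the polyhomogeneity of $L^2$ harmonic forms (leading exponent $2$, since this is the only indicial root above the $L^2$ cutoff), injectivity by an integration by parts whose boundary contribution is killed by the decay rates, and surjectivity via a weighted Hodge decomposition $\omega=D\zeta+\omega_H$ followed by the verification that the coexact piece of $D\zeta$ vanishes. Your cutoff-function packaging of the integrations by parts is equivalent to the paper's explicit boundary-integral computations.

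The one substantive difference is in the surjectivity step. The paper performs the Hodge decomposition at weight $-\epsilon$ (the decomposition \eqref{eq:decomposition}), then invokes Proposition~\ref{prop:poly} to pin down the explicit leading asymptotics of $\zeta$: namely $\zeta=\zeta_0+\zeta'$ with $\zeta'\in\calA^\epsilon$ and $\zeta_0$ a finite combination of the indicial solutions at roots $0$ and $1/2$, written out in \eqref{e:leading}. The vanishing of the boundary term $\int_{x=x_0}\zeta_0\wedge d*\zeta_0$ is then checked by direct inspection of that formula. Your version instead decomposes at weight $\epsilon$ and appeals to an abstract cokernel identification; to make ``the same cutoff integration-by-parts argument'' for $\delta\zeta_{k+1}$ rigorous you must first know that $\delta\zeta_{k+1}\in L^2$, which is not immediate from the decomposition alone (you only know $d\zeta_{k-1}+\delta\zeta_{k+1}\in x^\epsilon L^2$). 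This can be repaired---e.g.\ by noting that $\delta\zeta_{k-1}=0$ forces $d\zeta_{k-1}=D\zeta_{k-1}$ to land in the Fredholm target, hence in $L^2$---but the paper's route via the explicit indicial expansion is what actually replaces the ``hardest step'' you flagged, rather than a separate cokernel computation.
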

\begin{proof}	
We first show that $\Phi$ is well-defined.  Indeed, if $\omega \in L^2 \mathcal H^k$, then $D\omega = 0$, and we have seen that
as a consequence, $\omega$ is polyhomogeneous. The leading term in its expansion must be $x^2$; this is because $2$ is the only indicial root 
greater than the $L^2$ cutoff decay rate $1$. In fact, by~\eqref{eq:D00}, the leading term in this expansion equals
\begin{align}
 x^{2} f_{14} \left( \frac{dx}{x^{5/2}} \wedge  x^{1/2}(d\theta + y_{1}dy_{2})  + \frac{dy_{1}}{x^{1/2}} \wedge \frac{dy_{2}}{x^{1/2}}\right)
\end{align}
where $f_{14}$ is smooth for $x \geq 0$. Hence there is a well-defined class $[\omega] \in \WH(\epsilon)$ for any $\epsilon<1$. The image $\Phi(\omega)$ 
is the image of this class in $\WH(-\epsilon)$. 
 
We next prove that $\Phi$ is injective. For this, suppose that $\Phi([\omega])=0$. Then $\omega=d\zeta$ for some $\zeta \in x^{-\epsilon-3/2} L^2$. 
Write
$$
 \omega=\frac{dx}{x^{5/2}} \wedge \frac{\alpha}{x^{(k-2)/2}}\wedge e_{4} + \frac{dx}{x^{5/2}} \wedge \frac{\tilde \alpha}{x^{(k-1)/2}} 
+  \frac{\beta}{x^{(k-1)/2}} \wedge e_{4}+ \frac{\tilde \beta}{x^{k/2}} 
$$ 
where $\alpha, \tilde \alpha, \beta, \tilde \beta \in \calO(x^{2})$,  and
$$
\zeta=\frac{dx}{x^{5/2}} \wedge \frac{\mu}{x^{(k-3)/2}}\wedge e_{4} + \frac{dx}{x^{5/2}} \wedge \frac{\tilde \mu}{x^{(k-2)/2}} 
+  \frac{\nu}{x^{(k-2)/2}} \wedge e_{4}+ \frac{\tilde \nu}{x^{(k-1)/2}} 
$$ 
where $\mu, \tilde \mu, \nu, \tilde \nu \in \calO(x^{-\epsilon - 1/2})$ (note the $L^{2}$ cutoff is 1).
It suffices then to check that these orders of vanishing justify the equality
$$
\langle \omega, d\zeta\rangle = \langle \delta \omega, \zeta \rangle=0.
$$
To see this, note that the boundary term (say in the integral over $x \geq x_0 > 0$) in this integration by parts equals 
 $$
\int_{x=x_0} \zeta\wedge *\omega.
$$ 
Writing this out carefully, there are two tangential terms in $\zeta \wedge *\omega$, which contribute the terms
 \begin{align}
 \int \frac{\nu}{x^{\frac{k-2}{2}}} \wedge x^{\frac{1}{2}}(d\theta + y_{1}dy_{2})\wedge \frac{*_{Y}\alpha}{x^{\frac{2-(k-2)}{2}}},\\
 \int \frac{\tilde \nu}{x^{\frac{k-1}{2}}} \wedge \frac{*_{Y}\tilde \alpha}{x^{\frac{2-(k-1)}{2}}}\wedge x^{\frac{1}{2}}(d\theta + y_{1}dy_{2}).
 %\int \frac{\tilde \nu}{x^{\frac{k-1}{2}}} \wedge \frac{*_{Y}\alpha}{x^{\frac{2-(k-2)}{2}}}.
\end{align}
Both integrands decay at a sufficient rate with order $\calO(x^{1-\epsilon})$. Since $\epsilon<1$, both of these boundary terms vanish, which proves that $\phi$ is injective. 
%; the last one has the least order of vanishing, is of the form $\calO(x^{\epsilon})$. 

Finally we prove the surjectivity of $\Phi$. For this, recall the decomposition
\begin{equation}\label{eq:decomposition}
 x^{-\epsilon} L^{2}=\ran D_{-\epsilon} \oplus L^2 \calH^*
\end{equation}
where $D_{-\epsilon}$ is short for the operator $D$ acting on the space $\Pi_\theta \Pi_y x^{-\epsilon-3/2}L^2 \oplus \Pi_\theta \Pi_y^\perp x^{-\epsilon-1/2}L^2
\oplus \Pi_\theta^\perp x^{-\epsilon+1/2} L^2$.    Now suppose that $\eta \in \calA^{-\epsilon} \Omega^k$ and write 
$\eta=D\zeta + \gamma$ where $\gamma \in L^{2}\mathcal{H}^{k}$.   We can write $\zeta = \zeta_0 + \zeta'$ where $\zeta'\in \calA^{\epsilon}$ 
for any $0 < \epsilon < 1$ and $\zeta_{0}$ is in the kernel of $D_{00}$ and has polyhomogeneous expansion starting with the indicial roots $0$ and $1/2$, 
i.e., 
\begin{equation} \label{e:leading}
\begin{aligned}
 \zeta_{0}&=f_{0} + f_{12}\frac{dx}{x^{5/2}} \wedge \frac{dy_{1}}{x^{1/2}}+f_{13}\frac{dx}{x^{5/2}} \wedge \frac{dy_{2}}{x^{1/2}}\\
 &+ f_{24}\frac{dy_{1}}{x^{1/2}}\wedge x^{1/2}(d\theta + y_{1}dy_{2})+  f_{34}\frac{dy_{2}}{x^{1/2}}\wedge x^{1/2}(d\theta + y_{1}dy_{2})\\
 &+ f_{1234}\frac{dx}{x^{5/2}} \wedge \frac{dy_{1}}{x^{1/2}} \wedge \frac{dy_{2}}{x^{1/2}}\wedge x^{1/2}(d\theta + y_{1}dy_{2})\\
 &+ x^{1/2}f_{4} [x^{1/2}(d\theta + y_{1}dy_{2}) + \frac{dx}{x^{5/2}} \wedge \frac{dy_{1}}{x^{1/2}} \wedge \frac{dy_{2}}{x^{1/2}}]
 \end{aligned}
\end{equation}
where all the coefficients $f_{*}$ are smooth and of order 0 at infinity. 
We check that $\delta \zeta=0$. Indeed, this follows by writing
\[
\langle \delta \zeta, \delta \zeta \rangle = \langle \delta \zeta, \eta - d\zeta - \gamma \rangle 
\]
and integrating by parts. The boundary term reduces to
\[
\lim_{\epsilon \to 0} \int_{x = \epsilon} \zeta_{0}\wedge d*\zeta_{0};
\]
using ~\eqref{e:leading} and the vanishing rates of the coefficients, a short computation shows that this limit equals zero.
\end{proof}

We now relate this weighted de Rham space to the intersection cohomology of the (projective) compactification $X$ of $M$ obtained by 
collapsing each circle fiber at $x =0 $ to a point; this space $X$ is stratified, with principal top-dimensional stratum an open dense set, and 
the codimension two stratum $B = \bT^2$ which is the nilmanifold $Y$ with its $S^1$ fibers collapsed.  Any point $p \in B$ has a 
neighborhood diffeomorphic to a product $\mathcal{V}\times C_1(\bS^1)$ where $\mathcal{V}$ is an open ball in $\bT^{2}$ and $C_1(\bS^1)$ is 
the truncated cone over the circle.   We refer to the discussion in \cite{HHM} for a more detailed explanation of this stratification and for the 
definition and basic properties of the intersection cohomology spaces.

%=======
%Having now equated the Hodge cohomology with a weighted de Rham cohomology, we now relate this latter spae to intersection cohomology of
%the compactification $X$ of $M$ obtained by  collapsing each circle fiber at $x =0 $ to a point.  This space $X$ is stratified (in fact, it is a projective
%variety), with two strata: the principal stratum, which is the open dense set, and the codimension two stratum $B = \bT^2$ obtained by collapsing
%the $\bS^1$ fibers in the nilmanifold $Y$.    Any point $p \in B$ has a neighborhood which is diffeomorphic to a product $\mathcal{V}\times C_1(F)$
%where $\mathcal{V}$ is an open ball in $\bT^{2}$ and $C_1(F)$ is the truncated cone over $F=\bS^{1}$.   We refer to the discussion in \cite{HHM}
%for a more detailed explanation of this stratification and for the definition and basic properties of the intersection cohomology spaces we
%are about to introduce. 

Broadly speaking, the intersection homology of a stratified space is the homology of a complex of (say simplicial) chains for which the
dimensions of the intersections of their boundary faces with each stratum are regulated by a perversity function $\frakp$.  The dual complex
leads to the intersection cohomology, $\IH^*_{\frakp}(X)$, or in the present setting $\IH^*_{\frakp}(X,B)$ to emphasize the particular stratification.

%The intersection cohomology space $IH^{*}_{\frakp}(X,B)$ is associated to the intersection complex, where the perversity $\frakp=\{p(1),p(2)\}$ regulates the dimension of intersection of chains with strata. In our case $\ell=2$ and the regular perversity is given by $p(1)=p(2)=0$. 

Let us fix the `regular' perversity function $\frakp$, which has $\frakp(1) = \frakp(2) = 0$. 
We now define intersection cohomology with general perversity for $(X,B)$ as:
%\begin{definition}
%The intersection cohomology for the stratified space $(X,B)$ is defined as follows
\begin{equation}\label{e:IH}
\IH^{*}_{j}(X,B):=\left\{ 
\begin{matrix}
H^{*}(X-B), & j\leq -1\\
\IH_{\frakp}^{*}(X), & 0\leq j\leq \ell-2\\
H^{*}(X,B), & j\geq \ell-1
\end{matrix}\right.	
\end{equation}
The middle line $j=0$ corresponds to the regular perversity with $\frakp(1)=\frakp(2)=0$. 

We next relate the weighted cohomology for the Tian-Yau metric (which we assume equals $g_{\GH}$ near $x=0$) to that for the model metric
$$
g_{\mathrm{model}}=x g_{\GH} =  \frac{dx^{2}}{x^{4}} + g_{\bT^{2}} + x^{2} g_{\bS^{1}}. 
$$
The volume forms of $g_{\GH}$ and $g_{\model}$ are $x^{-3}\, dxdy_{1}dy_{2}d\theta$ and $x^{-1}\, dxdy_{1}dy_{2}d\theta$, respectively. 
%We use as an orthonormal basis for 1-forms for $g_{\model}$ the coframe $\{\tilde e_{i}=x^{1/2}e_{i}\}_{i=1}^{4}$ where the $e_{i}$ are as
%in~\eqref{1forms}.  If 
%\[
%\omega=\sum \omega_{i_{1}i_{k2}\dots i_{k}}\tilde e_{i_{1}}\wedge\dots \wedge \tilde e_{i_{k}} \in x^b L^2\Omega^k(M, \dvol_{g_{\model}})
%\]
%then 
%\[
%\int_0^1 |\omega_{i_1 \ldots i_k} |^2 x^{-2b-1}\, dx dy d\theta < \infty
%\]
%%$\omega \in \calO(x^{b})$ since the $L^{2}$ cut off for $g_{model}$ is 0. 
%On the other hand, rewriting $\omega$ in terms of the $g_{\TY}$-orthonormal coframe, we have
%$\omega=\sum x^{k/2}\omega_{i_{1}i_{k2}\dots i_{k}} e_{i_{1}}\wedge\dots e_{i_{k}}$, hence $\omega \in x^{a}L^{2}_{g_{TY}}$ implies 
%$x^{k/2}\omega_{*} \in \calO(x^{a+1})$ since the $L^{2}$ cutoff for $g_{TY}$ is $1$. Equating the two relations we get $b=a+1-\frac{k}{2}$, and hence
Taking this as well as the conformal change in the pointwise norm on $k$-forms into account, we calculate that
\begin{equation}
\WH^{k}(M, g_{TY}, a) = \WH^{k}(M, g_{\model}, a+1-\frac{k}{2}). 
\end{equation}

 The computation for $g_{\model}$ gives
 \begin{proposition}
 There is an isomorphism between the weighted cohomology and the intersection cohomology for $g_{\model}$
 \begin{equation}
 \WH^{k}(M,g_{\model},a)=\IH_{[a+1]}^{k}(X,B)
\end{equation}
assuming $a+k-\frac{3}{2}\neq 0$. 
\end{proposition}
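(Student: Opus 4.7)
The strategy, following \cite{HHM}, is sheaf-theoretic: I would realize the weighted de Rham cohomology as the hypercohomology of a complex of fine sheaves on the compactification $X$, and then verify that this complex satisfies the Deligne axioms for the intersection cohomology sheaf $\mathcal{IC}^\bullet_{[a+1]}$ on the two-stratum space $(X,B)$.

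For each open $U \subset X$, I define the complex $\calW^\bullet_a$ whose sections over $U$ are forms $\omega$ on $U \cap M$ with $\omega, d\omega \in x^a L^2_{\loc}$ near $B$. A standard partition-of-unity argument, using cutoffs adapted to the weight, shows that this complex is fine, so that
\[
\WH^k(M, g_{\model}, a) \cong \mathbb{H}^k(X, \calW^\bullet_a).
\]
On $X \setminus B$ the metric is smooth and of bounded geometry, so the ordinary de Rham Poincar\'e lemma gives $\calW^\bullet_a|_{X \setminus B} \simeq \underline{\RR}$, which matches $\mathcal{IC}^\bullet_\mathfrak{p}$ for any perversity $\mathfrak{p}$ away from $B$.

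The heart of the proof is the local computation at a point $p \in B$. A distinguished neighborhood has the product form $U \cong \mathcal{V} \times C_1(\bS^1)$, on which the model metric factors as
\[
g_{\model}|_U \;=\; g_{\bT^2}|_\mathcal{V} \;+\; \left(\frac{dx^2}{x^4} + x^2\, d\theta^2 \right).
\]
A K\"unneth-type argument reduces the local weighted cohomology to that of the cone end in $(x,\theta)$. Expanding forms in Fourier series in $\theta$, the nonzero modes give one-dimensional weighted $L^2$ problems with no admissible solutions, while the zero modes reduce to a two-term complex on the half line in $x$ whose indicial roots are precisely those read off from \eqref{eq:D00}. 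The $L^2$ cutoff determined by the weight $a$ and the form degree $k$ then selects which indicial roots lie inside the weighted $L^2$ window, and hence determines in which degrees the local cohomology is the full $H^*(\bS^1)$ versus zero. The hypothesis $a + k - 3/2 \neq 0$ is precisely the nonresonance condition that prevents an indicial root from sitting on the boundary of the $L^2$ window, so the truncation degree is a well-defined integer; a direct bookkeeping identifies it as $[a+1]$.

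Given these local calculations, $\calW^\bullet_a$ has the correct stalks, constructibility, and attaching triangle along $B$ to satisfy the Deligne axioms characterizing $\mathcal{IC}^\bullet_{[a+1]}$ on $(X,B)$, and uniqueness of the Deligne construction yields the asserted isomorphism $\WH^k(M, g_{\model}, a) \cong \IH^k_{[a+1]}(X,B)$. I expect the main obstacle to be the cone-end computation itself: carefully balancing the $L^2$ cutoff, the indicial roots of $D_{00}$, the form-degree shift, and the conformal/volume-form factor so that the resulting truncation lands exactly at perversity index $[a+1]$, with the nonresonance hypothesis ensuring the cutoff is unambiguous.
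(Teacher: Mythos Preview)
Your proposal is correct and follows essentially the same route as the paper: both invoke the sheaf-theoretic characterization of intersection cohomology, verify the local Poincar\'e lemma away from $B$, and reduce the local calculation near $B$ via K\"unneth to the weighted cohomology of the two-dimensional cone end $((0,\epsilon)\times \bS^1, \tfrac{dx^2}{x^4}+x^2\,d\theta^2)$. One minor remark: your appeal to the indicial roots of \eqref{eq:D00} is misplaced, since that matrix records the indicial data of the full four-dimensional $D_{00}$, whereas the relevant half-line computation here is the elementary one for $\WH^0$ and $\WH^1$ of $((0,\epsilon),\tfrac{dx^2}{x^4})$ at shifted weights, which the paper carries out directly; this does not affect the validity of your argument.
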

\begin{proof}
The boundary of the stratified space $X$ is a codimension-2 strata $B$ that is isomorphic to $\bT^{2}$. 
%If we can show the following local computation is  correct for any neighborhood $\calU$ 
We now claim that
\begin{itemize}
\item If $\calU$ does not intersect $B$, then $\WH^{k}(\calU, g_{\model}, a)\simeq H^{k}(\calU, \mathbb{C})$;
\item If $\calU$ is a neighborhood intersecting $B$, then 
$$
\WH^{k}(\calU, g_{\model}, a) \simeq \IH^{k}_{\frakp}(\calU)=\left\{ 
\begin{matrix}
H^{k}(\bS^1) & k\leq \ell-2-\frakp(\ell)\\
0 & k\geq \ell-1-\frakp(\ell)
\end{matrix}, 
\right.
$$
\end{itemize}
If these are both the case, then by the sheaf-theoretic characterization of $\IH^*$, cf.\ ~\cite{GM2}, this will prove
the isomorphism between the appropriate weighted and intersection cohomology spaces. 

This equality is obvious if $\calU \cap B = \emptyset$, so we turn to the case where $\calU = \calV \times (0,\epsilon) \times \bS^1$ 
intersects $B$, where $\calV$ is an embedded disk in $\bT^{2}$.  By the K\"unneth formula,  
$$
\WH^{k}(\calU, g_{\model}, a) = \WH^k((0,\epsilon) \times \bS^1, g_{\redu}, a)
$$
where $g_{\redu} = \frac{dx^{2}}{x^{4}} +x^{2} d\theta^{2}$.

We  now compute this latter space.   Taking into account the difference in volume forms between $g_\redu$ and $dx^2/x^4$,
we see that
%First, any $ f \in \WH^{0}\left((0,\epsilon)\times F,  g_{\redu}, a\right)=\{f\in x^{a}L^{2}: df=0)\}$ satisfies $\del_{\theta} f= \del_{x}f=0$. 
%Converting the weight to $x^{\tilde a} L^{2}((0,\epsilon), \frac{dx^{2}}{x^{4}})$ 
%$$
%\frac{(x^{-a}f)^{2}}{x}dxd\theta <\infty \Leftrightarrow \frac{(x^{-\tilde a}f)^{2}}{x^{2}}dx <\infty
%$$
%we get that $\tilde a=a-\frac{1}{2}$. Therefore
$$
\WH^{0}\left((0,\epsilon)\times F,  g_{\redu} , a\right) = \WH^{0}((0,\epsilon), \frac{dx^{2}}{x^{4}}, a-\tfrac{1}{2}) \otimes H^{0}(\bS^1).
$$
while
%=======
%we get that $\tilde a=a-\frac{1}{2}$. Therefore
%$$
%WH^{0}\left((0,\epsilon)\times F,  \frac{dx^{2}}{x^{4}} +x^{2} d\theta^{2}, a\right) = WH^{0}\left((0,\epsilon), \frac{dx^{2}}{x^{4}}, a-\frac{1}{2}\right) \otimes H^{0}(F).
%$$
%A similar computation for $1-$ and $2-$forms shows the following:
%\begin{equation}
%\begin{aligned}
%WH^{0}((0,\epsilon)\times F,  \frac{dx^{2}}{x^{4}} +x d\theta^{2}, a)=	WH^{0}\left((0,\epsilon), \frac{dx^{2}}{x^{5}}, a-\frac{1}{4}\right) \otimes H^{0}(F)\\
%WH^{1}((0,\epsilon)\times F,  \frac{dx^{2}}{x^{5}} +x d\theta^{2}, a) = WH^{1}\left((0,\epsilon), \frac{dx^{2}}{x^{5}}, a-\frac{1}{4}\right) \otimes H^{0}(F)\\
%\oplus 
%WH^{0}\left((0,\epsilon), \frac{dx^{2}}{x^{5}}, a+\frac{1}{4}\right) \otimes H^{1}(F) \\
%WH^{2}((0,\epsilon)\times F,  \frac{dx^{2}}{x^{5}} +x d\theta^{2}, a) = WH^{1}\left((0,\epsilon), \frac{dx^{2}}{x^{5}}, a+\frac{1}{4}\right) \otimes H^{1}(F)
%\end{aligned}
%\end{equation}
%Rewriting the above computations we get

\begin{equation}
\begin{aligned}
\WH^{k} & \left((0,\epsilon)\times F,  g_{\redu} , a\right) \\
& =  \WH^{0}((0,\epsilon), \frac{dx^{2}}{x^{4}},  k+a-\tfrac{1}{2}) \otimes H^k(\bS^1) \\ 
& \qquad \oplus \WH^{1}((0,\epsilon),  \frac{dx^{2}}{x^{4}}, k-1+a-\tfrac{1}{2}) \otimes H^{k-1}(\bS^1)
\end{aligned}
\end{equation}
when $k = 1, 2$ (so long as the weights are nonindicial, so that all these weighted cohomologies are finite dimensional). 

It is now easy to check the following: 
%\begin{equation*}
\begin{align*}
& \WH^{0}\left((0,\epsilon), \frac{dx^{2}}{x^{4}}, \gamma\right)=\left\{ 
\begin{matrix}
0, & \gamma\geq -\frac{1}{2}\\
\mathbb{C}, & \gamma< -\frac{1}{2}
\end{matrix}
\right. \\
& \WH^{1}\left((0,\epsilon), \frac{dx^{2}}{x^{4}}, \gamma\right)
=%\left\{
%\begin{matrix}
0, \ \  \gamma\neq 0
%\text{ infinite dim}, & \gamma=0
%\end{matrix}
%\right.
\end{align*}
%\end{equation*}
(When $\gamma=0$, the omission of $\gamma = 0$ is because the range of $d$ is not closed then.) 
We obtain that if $a+k-\frac{3}{2}\neq 0$, then
$$
\WH^{k}(\calU, g, a) =\left\{
\begin{matrix}
H^{k}(\bS^1) & k <-a\\
0 & k\geq -a
\end{matrix}
\right.,
$$
and in fact, the same is true for the intersection cohomology, %for $\IH^{*}_{p}$ that
$$
\IH^{k}_{\frakp}=\left\{
\begin{matrix}
H^{k}(\bS^1) & k\leq \ell-2 - \frakp(\ell) \\
0 &k\geq \ell-1-\frakp(\ell)
\end{matrix}
\right.
$$
assuming that $\ell=2$, and $\frakp(2)=[a+1]$.
 \end{proof}
 
Converting back to the Tian-Yau metric, we have
\begin{equation}\label{e:TYIH}
\WH^{k}(M, g_{TY}, a)=\IH^{k}_{[a+2-\frac{k}{2}]}(X,B).
\end{equation}
 
Combining these results, we arrive at the 
 \begin{theorem}
If $g$ is a Tian-Yau metric on $M$, then there is an isomorphism
% There is an isomorphism between the $L^{2}$ cohomology of a Tian-Yau metric $(M,g)$ and the ordinary cohomology
\begin{equation}
L^{2}\calH^{k}(M) \simeq
 \left\{ 
\begin{matrix}
H^{k}(X,B) & k=0,1\\
\mathrm{Im}\, (\IH^{2}(X,B) \rightarrow \IH_{0}^{2}(X,B)) & k=2\\ 
\IH_{0}^{3}(X,B) & k=3\\
\mathrm{Im}\, (\IH_{0}^{4}(X,B) \rightarrow H^{4}(X-B)) & k=4
\end{matrix} 
 \right.
 \end{equation}
  \end{theorem}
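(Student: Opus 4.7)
\bigskip

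\noindent\textbf{Proof proposal.}  The plan is to assemble the theorem from the two main ingredients already in hand: Theorem \ref{Th:weighted}, which identifies $L^2\calH^k(M,g)$ with the image of a natural restriction map between weighted de Rham cohomologies, and the isomorphism \eqref{e:TYIH}, which translates those weighted cohomologies into the intersection cohomologies $\IH^k_j(X,B)$ via the perversity index $j = [a+2-k/2]$. Combining these gives
\[
L^2\calH^k(M) \;\simeq\; \mathrm{Im}\Bigl( \IH^k_{[\epsilon+2-k/2]}(X,B) \longrightarrow \IH^k_{[-\epsilon+2-k/2]}(X,B)\Bigr),
\]
so the whole theorem is a matter of computing the two bracketed integers for each $k \in \{0,1,2,3,4\}$ and reading off the right-hand side of \eqref{e:IH}, keeping in mind that here the codimension of the singular stratum is $\ell=2$, so $\ell-1=1$ is the threshold at which the formula switches to $H^*(X,B)$, and $\ell-2=0$ is the middle perversity giving $\IH^*_0(X,B)$.

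The bookkeeping is straightforward. With $\epsilon > 0$ small and $a = \pm\epsilon$, the quantity $a+2-k/2$ sits within $\epsilon$ of $2,\,3/2,\,1,\,1/2,\,0$ as $k$ runs over $0,1,2,3,4$. Taking integer parts: for $k=0,1$ both $[\pm\epsilon + 2]$ and $[\pm\epsilon + 3/2]$ lie in the range $j \geq 1$, so both weighted groups coincide with $H^k(X,B)$; for $k=3$ both $[\pm\epsilon+1/2]$ equal $0$, so both sides equal $\IH^3_0(X,B)$; for $k=2$ the positive choice gives $j=1$ (yielding $H^2(X,B)=\IH^2(X,B)$) while the negative choice gives $j=0$ (yielding $\IH^2_0(X,B)$), producing the image stated; and for $k=4$ the positive choice gives $j=0$ (so $\IH^4_0(X,B)$) while the negative choice gives $j=-1$ (so $H^4(X-B)$). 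Each case then matches the theorem verbatim.

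The one point that requires a moment's thought, and is likely the main obstacle, is to verify that the restriction map $\WH^k(M,g,\epsilon) \to \WH^k(M,g,-\epsilon)$ from Theorem \ref{Th:weighted} corresponds, under the isomorphism \eqref{e:TYIH}, to the natural map between intersection cohomology groups with increasing perversity. Functorially this ought to be automatic: the inclusion of weighted $L^2$ spaces $x^{\epsilon}L^2 \hookrightarrow x^{-\epsilon}L^2$ induces the identity on closed forms and descends to a well-defined map on cohomology, and at the sheaf-theoretic level of the proof of \eqref{e:TYIH} (which proceeds via a local calculation of $\WH^k$ on cone neighborhoods $\calV \times C_1(\bS^1)$), the induced map on stalks is precisely the canonical morphism between the intersection cohomology sheaves for the two perversities. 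One needs to check this compatibility stratum by stratum, but the local model computation at $B$ in the proof of the previous proposition already exhibits the map as the one sending classes that happen to extend to lower weight to their corresponding classes in the larger space, which is exactly the comparison morphism in intersection cohomology.

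Once that naturality point is settled, the theorem follows by assembling Theorem \ref{Th:weighted}, the identity \eqref{e:TYIH}, and the five case computations above. There is nothing further to prove: the $L^2$ Hodge cohomology of any Tian-Yau metric $g$ is determined by the quasi-isometry class of $g$ (so we may assume $g = g_{\GH}$ near $\partial M$ from the outset, as noted at the start of Section 5), and every analytic input — polyhomogeneity of harmonic forms, the description of the indicial roots of $D_{00}$, and the closedness of $\mathrm{ran}\, D$ at nonindicial weights — has already been established in Propositions \ref{prop:dirac} and \ref{prop:poly}, so the argument is purely a combination of the preceding results.
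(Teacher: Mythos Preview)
Your proposal is correct and follows exactly the approach of the paper: combine Theorem~\ref{Th:weighted} with \eqref{e:TYIH} and then read off the perversity indices $[\pm\epsilon + 2 - k/2]$ against the definition \eqref{e:IH}. The paper's own proof is in fact just the one-line statement ``combine these results,'' so your case-by-case computation of the integer parts and your remark on the naturality of the comparison map simply make explicit what the paper leaves implicit.
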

\begin{proof}	
When $a$ is a sufficiently small positive number, Theorem~\ref{Th:weighted} equates
$$
L^{2}\mathcal{H}^{k}(M,g) \simeq \mathrm{Im}\, (\WH^{k} (M, g, a) \rightarrow \WH^{k} (M, g, -a)).
$$
The result follows by combining this with~\eqref{e:TYIH} and~\eqref{e:IH}. 
\end{proof}

We can go further than this since the projective compactification of any ALH* is a weak del Pezzo surface.  We divide the class of ALH* spaces
into the subclasses $\mathrm{ALH}^*_b$, where $b$ is the degree of the circle bundle of the nilmanifold $Y$. As stated in~\cite[Theorem 1.4]{hsvz2}, any $\mathrm{ALH}^{*}_{b}$ manifold can be compactified to a weak del Pezzo surface of degree $b$ by adding a smooth anticanonical elliptic curve. 
\begin{theorem}
If $M$ is of type $\mathrm{ALH}^*_b$, then its Hodge cohomology space $L^{2}\calH^k(M)$ is trivial except when $k=2$, and
$L^{2}\calH^{2}(M) \cong H^{2}(M)$ has dimension $11-b$. 
\end{theorem}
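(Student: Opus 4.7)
The plan is to specialize the preceding theorem, which identifies $L^2\calH^k(M)$ with maps between relative and intersection cohomology groups of the compactification $(X,B)$, to the setting where $M$ is $\mathrm{ALH}^*_b$, and then make everything explicit using the standard topology of weak del Pezzo surfaces. By the cited classification (\cite[Thm.\ 1.4]{hsvz2}), the projective compactification $X$ is a smooth weak del Pezzo of degree $b$ and $B = X \setminus M$ is a smooth anticanonical elliptic curve with $B\cdot B = (-K_X)^2 = b$. The relevant Betti numbers are
\[
b_*(X) = (1, 0, 10-b, 0, 1), \qquad b_*(B) = (1, 2, 1),
\]
and I will verify $H^*(M) = (1, 0, 11-b, 0, 0)$ from the Thom--Gysin sequence.

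For $k \neq 2$ the vanishing is a short diagram chase. The long exact sequence of the pair $(X,B)$ gives $H^0(X,B) = H^1(X,B) = 0$: indeed $H^0(X) \xrightarrow{\cong} H^0(B)$ (both connected) and $H^1(X) = 0$. For $k = 3$, since $X$ is a smooth 4-manifold with codimension-2 stratum $B$, the Deligne--Goresky--MacPherson construction in the regular perversity yields $\mathcal{IC}^* = \mathbb{C}_X$ as a sheaf, hence $\IH^3_0(X,B) = H^3(X) = 0$. For $k = 4$, the map lands in $H^4(M) = 0$, so its image is trivial. Together with the preceding theorem this kills $L^2\calH^k$ for all $k \neq 2$.

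The substantive case is $k = 2$, where the previous theorem gives $L^2\calH^2(M) \cong \mathrm{Im}\bigl(H^2(X,B) \to \IH^2_0(X,B)\bigr)$. I would first compute $\dim H^2(X,B) = 11-b$ from the long exact sequence: since $H^1(X) = 0$, the class $H^1(B) = \mathbb{C}^2$ injects, contributing $2$; and since $\langle [B], B\rangle = b \neq 0$ the restriction $H^2(X) \to H^2(B)\cong \mathbb{C}$ is surjective, so $\ker$ contributes $(10-b) - 1 = 9-b$. I would then identify the image with $H^2(M)$ by routing through the Thom--Gysin sequence
\[
0 \to \operatorname{coker}\bigl(H^0(B) \to H^2(X)\bigr) \to H^2(M) \to H^1(B) \to 0,
\]
which independently yields $\dim H^2(M) = (10-b) - 1 + 2 = 11 - b$, compatibly with the analytic representatives constructed from Proposition~\ref{prop:dirac}.

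The main obstacle is precisely this middle-degree identification: naive composition with the restriction $H^2(X,B) \to H^2(X)$ would undercount, so one must realize that each class in $H^2(X,B)$ is represented by a polyhomogeneous harmonic form with leading behavior $x^2 (d\theta + y_1 dy_2)\wedge\cdots$ (indicial root $\gamma = 2$ as in Proposition~\ref{prop:dirac}), and that the Hodge isomorphism $L^2\calH^2 \cong \mathrm{Im}(\WH^2(\epsilon)\to \WH^2(-\epsilon))$ from Theorem~\ref{Th:weighted} naturally produces elements of $H^2(M)$ rather than of $H^2(X)$. Once this analytic-to-topological identification of the image with $H^2(M)$ is in place, the dimension count $11-b$ is forced by weak del Pezzo topology and the proof is complete.
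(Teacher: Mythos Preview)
Your treatment of $k\neq 2$ is fine and in places tidier than the paper's (which reduces to $k\leq 2$ via the Hodge star rather than handling $k=3,4$ directly). But there is an internal inconsistency that undercuts the $k=2$ step. For $k=3$ you invoke that $X$ is a smooth manifold, so the intersection complex for the zero perversity is the constant sheaf and $\IH^3_0(X,B)=H^3(X)$. That reasoning is correct, but it applies in every degree: it forces $\IH^2_0(X,B)=H^2(X)$ as well, which has dimension $10-b$. The image of any map into a $(10-b)$-dimensional space cannot be $(11-b)$-dimensional, and in fact the natural map $H^2(X,B)\to H^2(X)$ from the long exact sequence of the pair has image $\ker\bigl(H^2(X)\to H^2(B)\bigr)$, of dimension $9-b$. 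So the ``undercount'' you flag is not a cosmetic routing issue: taken at face value, your identification of $\IH^2_0$ is incompatible with the claimed answer, and something beyond a dimension count is required to reconcile them.

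The paper does not resolve this by tracing the weighted-cohomology map through the sheaf identification as you propose. Instead it argues in two steps: first, that $H^2(X,B)\to\IH^2_0(X,B)$ is surjective, by a direct intersection-homology chain argument (perversity-$0$ allowable $2$-cycles meet $B$ at most in points, their boundaries not at all, so $I_0H_2(X,B)\hookrightarrow H_2(X\setminus B)$ and one dualizes); second, that $\IH^2_0(X,B)\cong H^2(M)$, by running the Mayer--Vietoris sequence for $\IH^*_0$ on the cover $X=M\cup N(B)$ together with an explicit calculation of $\IH^i_0(N(B),B)$ in terms of $H^i(\partial X)$. Your proposal skips both steps and asserts that ``once this analytic-to-topological identification is in place'' the proof is done; but that identification \emph{is} the proof, and computing that $\dim H^2(X,B)=\dim H^2(M)=11-b$ via Thom--Gysin, while correct, does not by itself establish that every class in $H^2(M)$ carries an $L^2$ harmonic representative.
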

\begin{proof}

By Poincar\'e duality (or just the Hodge star), it suffices to consider only the cases $k = 0, 1, 2$. 

Any $L^2$ harmonic function must be constant, and nonzero constants do not lie in $L^2$, so $L^2\calH^0(M) = \{0\}$, agreeing
with the fact that $H^0(M,B) = 0$. 

Next, when $k=1$, consider the long exact sequence
$$
\dots \rightarrow H^{0}(M) \rightarrow H^{0}(\partial M) \rightarrow H^{1}(M,\partial M) \rightarrow H^{1}(M) \rightarrow \dots
$$
The manifold $M$ has only one end (this follows from the Cheeger-Gromoll splitting theorem), so the first map is surjective. In addition,
$M$ is simply connected, so $H^{1}(M)=0$ as well. Therefore $L^{2}\calH^1(X,B)=H^1(X,B)=H^1(M, \del M)=0$. 

Now let $k=2$. We first prove that the map $H^{2}(X,B) \to \IH_{0}^{2}(X,B)$ is surjective.   At the level of homology, the $1$- and $2$-chains satisfying
the perversity $0$ condition in $(X,B)$ do not intersect $B$, hence the map $I_{0}H_{2}(X,B) \to H_{2}(X-B)$ is injective, hence by duality,
$L^{2}\calH^{2}(M)=\IH_{0}^{2}(X,B)$ is surjective. 

Next, let $N(B)$ be a normal neighborhood of $B$ in $X$. The Mayer-Vietories sequence (see~\cite[\S 2.2.2]{HHM}) states that
$$
\begin{aligned}
&\dots \rightarrow \IH_{0}^{1}(X,B) \rightarrow H^{1}(M) \oplus \IH^{1}_{0}(N(B), B) \rightarrow H^{1}(\partial X)\\
& \rightarrow \IH^{2}_{0}(X,B) \rightarrow H^{2}(M) \oplus \IH_{0}^{2}(N(B), B) \rightarrow H^{2}(\partial X) \rightarrow \IH_{0}^{3}(X,B) \rightarrow\dots
\end{aligned}
$$

The first and last terms here vanish, i.e., $\IH_{0}^{1}(X,B)=\IH_{0}^{3}(X,B)=0$, using what we calculated for $1$-forms above. 
We next claim that $\IH_{0}^{i}(N(B), B)\simeq H^{i}(\del X)$ when $i=1,2$.  Indeed, when $i=1$, the $0$-perversity only allows for $1$-chains 
that do not intersect $B$, and $N(B)-B$ retracts to $\partial X$, hence $\IH_{0}^{1}(N(B),B)=H^{1}(\partial X)$.  When $i=2$,  the $0$-perversity $2$-chains 
may only intersect $B$ in points, while the boundaries of these cycles cannot intersect $B$. Any such $2$-chains can be deformed to a $2$-chain 
in $\del X$.  On the other hand, the only $0$-perversity $3$-cycle in generated by $\del X$ itself, so $H^{2}(\del X) \cong \IH_{0}^{2}(N(B),B)$. 
Using that $H^{1}(M)=0$, we obtain that $\IH^{2}_{0}(X,B)\simeq H^{2}(M)$. 

Finally, we quote that for $1\leq b\leq 9$, if $M$ is $\mathrm{ALH}^*_{b}$, then by the compactification to the degree-$b$ weak del Pezzo surface with a smooth anticanonical elliptic curve divisor~\cite{hsvz2}, one gets $\dim H^{2}(M) = 11-b$ (see~\cite{LeeLin} for example for the dimension calculation).   
%=======
%First of all, the first term and the last term vanish, i.e. $IH_{0}^{1}(X,B)=IH_{0}^{3}(X,B)=0$, by duality and the result of $k=1$ above.  Next we show $IH_{0}^{i}(N(B), B)\simeq H^{i}(\partial X)$ for $i=1,2$. For $i=1$, the 0-perversity only allows for 1-chains that do not intersect $B$, and $N(B)-B$ retracts to $\partial X$, therefore $IH_{0}^{1}(N(B),B)=H^{1}(\partial X)$. For $i=2, $ the 0-perversity 2-chains are allowed to intersect $B$ at points while the boundary of these cycles cannot intersect $B$. Any such 2-chains can deform to a 2-chain in $\partial X$ under the contraction map. On the other hand the only 0-perversity 3-cycle is generated by $\partial X$ itself, so the 2-cycles in $H^{2}(\partial X)$ maps bijectively to $IH_{0}^{2}(N(B),B)$. Together with the fact that $H^{1}(M)=0$, we obtain the isomorphism $IH^{2}_{0}(X,B)\simeq H^{2}(M). $ And for $1\leq b\leq 9$, when $M=ALH^{*}_{b}$ the dimension of $H^{2}(M)$ is $(9-b)$. 

\end{proof}

\section{Uniqueness and asymptotic regularity of Tian--Yau metrics in higher dimensions}
In this section we prove that Tian-Yau metrics are polyhomogeneous at infinity.   Throughout this discussion, $M = Y \setminus D$ is the complement 
of a smooth anti-canonical divisor $D$ in a compact weakly Fano manifold $Y$ of complex dimension $n \geq 2$, and $g$ is a Tian-Yau metric asymptotic to 
the Calabi model \eqref{C1}.  We show that $g$ has a polyhomogeneous expansion as $r \to \infty$ ($x \to 0$).   We make sense of this
as follows. In terms of the compactification of $M$ as a manifold with boundary $X$ defined in Section 2, the Calabi metric $g_C$ is obviously 
polyhomogeneous at $x=0$.  We prove here a relative statement, that if $\omega$ is the K\"ahler form for $g$ then $\omega - \omega_C$ is 
polyhomogeneous on $X$.   The paper \cite{hsvz} shows that if $\dim M =  4$, this is automatic in a very strong way, since this difference
is actually exponentially decaying as $r \to \infty$ (or in other words, decays like $e^{-\alpha/x}$ for some $\alpha > 0$).   There
are Tian-Yau metrics in higher dimensions which enjoy this same exponential decay, but as shown recently by Y.\ Chen \cite{YifanChen},
this is not always true.  A metric is called Calabi if this difference decays exponentially, and weakly Calabi if it decays polynomially; Chen
shows that the former is true only if a certain cohomological condition is satisfied. This condition is automatical when $n=2$, but Chen
shows that there is a rich class of so-called weakly Calabi Tian-Yau metrics, with $\omega - \omega_C = \calO(x^{1/n})$. To follow the notation in~\cite{YifanChen}, in this section we denote $r$ as the distance of $g$, which is of order $x^{-\frac{n+1}{2n}}$, and the variable $z$ used in~\cite{YifanChen} is related by $z=x^{-1/n}$. We prove here that
for these new metrics, this difference is polyhomogeneous, with leading exponent $1/n$.  This extra regularity is expected to have interesting 
applications, as in \cite{CMR}. 

We begin by reviewing the steps in her proof.  The Calabi K\"ahler form $\omega_C$ is only defined for $x < x_0$, say.  Chen first 
constructs a K\"ahler form $\omega_Y$ which is defined on all of $M$, and which converges exponentially to $\omega_C$.  Ideally
one could now apply what she terms the `Tian-Yau-Hein machine', namely, solving the Monge-Ampere equation to find
a perturbation of $\omega_Y$ which is Ricci flat.  The analysis needed here is now fairly standard, but it can be carried out only
when the Ricci form for this approximate solution decays like $r^{-(2 + \epsilon)}$ for some $\epsilon > 0$, and unfortunately, the
Ricci form for $\omega_Y$ decays only like $r^{-2/(n+1)}$. Her main step then is to construct a better approximate solution by
solving a finite sequence of linear Poisson equations on $M$ so as to produce successive correction terms, and hence to obtain a 
metric with Ricci form decaying faster than $r^{-2}$, as required. Finally the Tian-Yau-Hein method can then be applied to complete
the construction.

In this brief section we show how the analytic tools used in this paper show that this solution metric is polyhomogeneous. 
The key tool is the following analytic result, which is the direct analogue for the scalar Laplacian of the Proposition~\ref{prop:poly} above. 
\begin{lemma}\label{lemmaA}
If $\Delta u=f$ where $f\in \calA^a(M)$ and 
\[
u\in x^{c-(\frac{2}{n}+3)}\Pi_{\theta}\Pi_{y} H^2_{\bfa} (M)  \oplus x^{c-(\frac{2}{n}+1)}\Pi_{\theta}\Pi_{y}^{\perp} H^2_{\bfa} (M) \oplus 
x^{c-(\frac{2}{n}-1)}\Pi_{\theta}^{\perp}\Pi_{y}^{\perp} H^2_{\bfa}(M),
\]
for some $c < a$, and if neither $a$ nor $c$ are indicial weights, then 
\[
u \in \Pi_{\theta}\Pi_{y} \calA^E_{\phg}(M) + \calA^a(M),
\]
where $E$ is the set of indicial roots of $\Delta$ lying in the interval $(c,a)$.   In other words, $u$ admits a partial polyhomogeneous expansion, but
only up to order $x^a$.  If $f$ is itself polyhomogeneous or rapidly vanishing as $x \to 0$, then $u$ is polyhomogeneous to all orders. 
\end{lemma}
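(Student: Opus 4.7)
The plan is to follow the template of Proposition~\ref{prop:poly} with $D$ replaced by $\Delta$, the one new ingredient being that in complex dimension $n$ the three Fredholm components carry weight shifts that depend on $n$ through the conformal factor $x^{2-2/n}$ relating the Tian--Yau metric to the model $\bfa$-metric of \eqref{C11}. First I would check that the parametrix construction of Section 4 adapts verbatim: the decomposition
\begin{equation*}
\Delta_{\TY} = \Delta_{\perp *} + \Delta_{0 \perp} + \Delta_{00} + E
\end{equation*}
splits $\Delta_{\TY}$ near $\del M$ into a fully elliptic $\bfa$-, $c$- and $b$-piece (plus cross terms vanishing to positive order), and the assembled left parametrix $G_\ell = G_{\perp *, \ell} + G_{0 \perp, \ell} + G_{00, \ell}$ satisfies $G_\ell \Delta_{\TY} = I + R_\ell$. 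The weight shifts $\tfrac{2}{n}+3,\ \tfrac{2}{n}+1,\ \tfrac{2}{n}-1$ in the hypothesis on $u$ are exactly those produced by the three components of $G_\ell$, so that $\Delta_{\TY} u = f \in \calA^a$ is a uniform statement.

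Applying $G_\ell$ to $\Delta_{\TY} u = f$ gives
\begin{equation*}
u = G_\ell f - R_\ell u,
\end{equation*}
and I would analyze the two terms separately. Since each of the three subcomponents of $G_\ell$ preserves conormality and shifts orders by the weights recorded in the Fredholm statements, and since $a$ is nonindicial (so no logarithms are introduced), we get $G_\ell f \in \calA^a(M)$. For the remainder, the kernel $R_\ell$ is rapidly decaying at all boundary faces of $M^2$ in the $\Pi_\theta^\perp$ and $\Pi_\theta \Pi_y^\perp$ components by construction of $G_{\perp *, \ell}$ and $G_{0 \perp, \ell}$, while in the $\Pi_\theta \Pi_y$ component it is polyhomogeneous on $M^2$ with index set on the left face governed by the indicial roots of $\Delta_{00}$ strictly greater than the weight $c$. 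Pairing against $u$ therefore produces
\begin{equation*}
R_\ell u \in \Pi_\theta \Pi_y \calA^F_{\phg}(M),
\end{equation*}
where $F$ is the set of all $\Delta_{00}$-indicial roots greater than $c$.

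The final step is a truncation: since $F$ accumulates only at $+\infty$, write $R_\ell u = w_1 + w_2$ with $w_1$ the terms whose exponents lie in $F \cap (c, a) = E$ and $w_2$ the tail, so $w_2 \in \calA^a(M)$. Substituting back gives
\begin{equation*}
u = G_\ell f - w_1 - w_2 \in \Pi_\theta \Pi_y \calA^E_{\phg}(M) + \calA^a(M),
\end{equation*}
as claimed. For the concluding sentence, if $f$ is itself polyhomogeneous or lies in $\dot{\calC}^\infty(M)$, the same argument with $a$ taken arbitrarily large (equivalently, iterated) gives a full polyhomogeneous expansion of $u$ whose index set is the collection of all $\Delta_{00}$-indicial roots above $c$.

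The hard part will be the bookkeeping of the higher-dimensional indicial structure: the indicial roots of $\Delta_{00}$ (the restriction of $\Delta_{\TY}$ to the $\Pi_\theta \Pi_y$ subbundle over $[0, \epsilon_0)$) must be recomputed in the $n$-dimensional Calabi model to confirm that both $c$ and $a$ may indeed be chosen nonindicial, and that the index set $E$ produced by the parametrix is precisely the one described in the statement. This is a routine Mellin-transform calculation once the correct $n$-dependent conformal weighting is pinned down, but it is the step where the shifts $\tfrac{2}{n}\pm 1$ and $\tfrac{2}{n}+3$ appearing in the hypothesis must actually be verified; the rest of the argument is a formal consequence of the $\bfa$-calculus parametrix already developed in Section 4.
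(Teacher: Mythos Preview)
Your proposal is correct and follows essentially the same approach as the paper: the paper does not give a separate proof of this lemma but states explicitly that it is ``the direct analogue for the scalar Laplacian of Proposition~\ref{prop:poly},'' and your argument is precisely that analogue---apply the left parametrix to write $u = G_\ell f - R_\ell u$, use that $G_\ell$ preserves conormality and order while $R_\ell$ produces a polyhomogeneous output with exponents drawn from indicial roots above $c$, then truncate at order $a$. Your additional remarks about the $n$-dependent weight shifts and the need to recompute indicial roots in the higher-dimensional Calabi model are appropriate context that the paper leaves implicit.
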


As an initial application of this result, we note that each of the correction terms in Chen's approximate solution are obtained by solving an
equation of the form $\Delta u = f$ with $f$ already known to be polyhomogeneous.  Applying the final statement of Lemma~\ref{lemmaA},
we see that these correction terms are polyhomogeneous, and hence that our approximate solution has K\"ahler form $\omega_0$ with 
$\omega_0 - \omega_C$ is polyhomogeneous, with leading term of order $x$ (and with Ricci form decaying like $x^{2 + \epsilon}$). This 
can be achieved by using $\beta+ i\del\delbar U_K$ with a sufficiently large $K$ in~\cite[Theorem 6.1]{YifanChen}.
The exact solution is then given by $\omega_0 + i \del \delbar u$ where 
\begin{equation}
\frac{ (\omega_0 + i \del \delbar u)^n}{\omega_0^n} = e^{f_0},
\label{MA1}
\end{equation}
where $f_0$ is a priori known to be polyhomogeneous.  Thus our result follow from the 
\begin{theorem}
Suppose that $f\in \calA_{\phg}^{\delta}(M)$ for some $\delta > 0$ and let $u$ be the unique decaying solution to the equation \eqref{MA1}.
Then $u \in \calA_{\phg}^G$ for some positive index set $G$.
\end{theorem}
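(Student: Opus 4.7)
The plan is to bootstrap from an initial positive polynomial decay of $u$ to full polyhomogeneity by iterating Lemma~\ref{lemmaA}, treating the Monge--Ampère equation as a quasi-linear perturbation of the linear Poisson equation for $\Delta_{\omega_0}$. First, expand
\[
\frac{(\omega_0 + i\del\delbar u)^n}{\omega_0^n} = 1 + \Delta_{\omega_0} u + Q(i\del\delbar u),
\]
where $Q$ is a polynomial of degrees $2$ through $n$ in the components of $i\del\delbar u$ relative to $\omega_0$, with coefficients that are polyhomogeneous on $X$ (since $\omega_0 - \omega_C$ is polyhomogeneous). Linearizing $e^{f_0} - 1$ and moving the nonlinear piece to the right, we obtain the equivalent equation
\[
\Delta_{\omega_0} u = (e^{f_0} - 1) - Q(i\del\delbar u),
\]
so that the right-hand side is a polyhomogeneous function plus a quadratic-and-higher error term in $i\del\delbar u$.

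Next, I would use Chen's a priori decay estimate to conclude that $u \in \calA^{\epsilon_0}$ for some small $\epsilon_0 > 0$ (and in the appropriate weighted $H^2_{\bfa}$ spaces, after taking into account the different rates enforced by the projectors $\Pi_\theta$, $\Pi_y$). Since $i\del\delbar$ is built from $\bfa$-vector fields acting on $\omega_0$-coefficients (after appropriate rescaling as in \eqref{weightsL}), $i\del\delbar u$ inherits $\bfa$-conormal regularity, and $Q(i\del\delbar u) \in \calA^{2\epsilon_0}$. Applying Lemma~\ref{lemmaA} with $f = (e^{f_0}-1) - Q(i\del\delbar u) \in \calA^{a_1}$, where $a_1 = \min(\delta, 2\epsilon_0)$ is chosen to avoid indicial weights, yields a partial polyhomogeneous expansion
\[
u = \sum_{\gamma \in E_1} a_\gamma(y,\theta)\, x^\gamma (\log x)^{p_\gamma} + u_1, \qquad u_1 \in \calA^{a_1},
\]
where $E_1$ consists of the indicial roots of $\Delta_{\omega_0}$ in $(\epsilon_0, a_1)$.

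I would then iterate: at stage $N$, the right-hand side $(e^{f_0} - 1) - Q(i\del\delbar u)$ is polyhomogeneous to order $a_N$, where $a_N$ grows with $N$ because products and powers of the partial expansion of $u$ improve the decay of $Q(i\del\delbar u)$ by a factor of at least $x^{\epsilon_0}$ per iteration. Define $G$ to be the smallest index set containing the indicial roots of $\Delta_{\omega_0}$ on each of the three components $\Pi_\theta\Pi_y$, $\Pi_\theta\Pi_y^\perp$, $\Pi_\theta^\perp$, together with the exponents of $\delta$ in $f_0$, and closed under finite sums (of multiplicity at most $n$) to account for the nonlinearity $Q$. At each stage, feeding the current partial expansion of $u$ through $Q$ produces a polyhomogeneous function with index set contained in $G$, and Lemma~\ref{lemmaA} extends the expansion of $u$ to one more order, also with exponents in $G$. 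In the limit $N \to \infty$ this produces $u \in \calA^G_{\phg}$.

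The main obstacle is the bookkeeping for the iteration: one must verify that $G$ as defined is genuinely an index set, i.e., that in every vertical strip $\{\mathrm{Re}\,z \in (A,B)\}$ only finitely many exponents occur. This is a consequence of the discreteness of the indicial roots of $\Delta_{\omega_0}$ together with the fact that $Q$ has bounded degree $n$, so only finite sums of at most $n$ prior exponents can appear; a standard argument of the form used in \cite{Ma-edge1} shows the closure is locally finite. A subsidiary technical point, to be checked at each step, is that one can perturb weights to stay off indicial values, as required by the hypothesis of Lemma~\ref{lemmaA}, and that the weighted $H^2_{\bfa}$ mapping properties of Theorem~\ref{thm:main} are compatible with the component decomposition needed when applying the lemma to $\Delta_{\omega_0}$ (as opposed to $\Delta_{g_{\TY}}$); this follows because $\omega_0$ and the Tian--Yau metric differ by polyhomogeneous correction terms that do not alter the indicial structure.
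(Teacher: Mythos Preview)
Your iteration scheme for the final step is essentially correct and matches the paper's third step, but there is a genuine gap earlier: you assert that ``Chen's a priori decay estimate'' gives $u \in \calA^{\epsilon_0}$, i.e., full conormality with respect to the $b$-vector fields $x\del_x, \del_{y_j}, \del_\theta$. Chen's estimate only gives a pointwise bound $u = \calO(x^\delta)$, and local elliptic regularity for the Monge--Amp\`ere equation upgrades this only to $u \in x^\kappa H^\infty_{\bfa}$, i.e., regularity under the much more degenerate $\bfa$-vector fields $x^3\del_x, x\del_{y_j}, \del_\theta$. This is strictly weaker than conormality, and Lemma~\ref{lemmaA} cannot be applied until the right-hand side $Q(i\del\delbar u)$ is known to lie in some $\calA^a$, which in turn requires $u$ itself to be conormal.

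Bridging this gap is in fact the main technical content of the paper's argument. One applies the left parametrix $G_\ell$ to the equation $\Delta_{\omega_0} u = E(u) + (e^{f_0}-1)$, then differentiates by an arbitrary $V \in \calV_b$ and controls the commutator $[G_\ell, V]$. The nontrivial point is that although $V$ is only a $b$-vector field, one can show $[G_\ell, V] \in \Psi^{-2,\calE}_{\bfa}$ by sandwiching with $\Delta_{\omega_0}$ on both sides and checking that $[\Delta_{\omega_0}, V] \in \Diff^2_{\bfa}$; the quadratic structure of $E(u)$ together with the bound $|i\del\delbar u| \leq Cx$ then lets one absorb the extra loss from $V$. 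Iterating over successive $b$-derivatives yields $u \in \calA^\delta$, after which your bootstrapping via Lemma~\ref{lemmaA} goes through exactly as you describe.
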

We present an abbreviated proof since this type of argument appears in several places already, see \cite{CMR, JMR}. 
\begin{proof}   The steps of the proof are as follows. We first note that by local elliptic regularity, the a priori bound on $u$
gives that $u \in x^\kappa H^\infty_{\bfa}(M)$ for any $\kappa < \delta - 1$. In other words, $u$ is smooth with respect to
repeated differentiations by the vector fields in $\calV_{\bfa}$.  We next upgrade this and show that $u$ is conormal, i.e., 
infinitely regular with respect to differentiations by $b$-vector fields.  The final step is to show that it has a polyhomogeneous
expansion, as claimed.

The first of these steps follows by first recalling that since the volume form of $g_C$ is for any $n$ boundedly equivalent to $x^{-3} dx dy d\theta$,
(where $y$ is now a local coordinate on the divisor $D$), the bound $u = \calO(x^\delta)$ implies that $u \in x^\kappa H^\infty_{\bfa}(M)$ for any 
$\kappa < \delta - 1$.  The local elliptic estimates for this complex Monge-Ampere equation then control the corresponding weighted
norms of the functions obtained by taking arbitrarily many derivatives of $u$ with respect to vector fields which are unit length
with respect to the Calabi metric.  However, any such vector field is of the form $V = x^{1/n - 1} W$ where $W$ is a `size $1$' vector field
for the metric $g_{\bfa}$.  Clearly then, the higher derivatives of $a$ with respect to $\calV_{\bfa}$ are all controlled, hence
$u \in x^\kappa H^\infty_{\bfa}$ with $\kappa$ as above. 

As for the second step, we start with this first assertion along with the estimate $i\del\delbar u \in \calO(x)$, as proved
by Chen. Expanding the equation~\eqref{MA1} leads to the equivalent equation
$$
\Delta_{\omega_{0}} u=E(u) + (e^{f_{0}}-1), \ E(u) =\sum_{j=2}^{n} \frac{(n-j)!j!}{n!}\frac{(\omega_{0}^{n-j}\wedge (i\partial \bar \partial u)^{j})}{\omega_{0}^{n}}.
$$
Now apply the left parametrix $G_{l}$ of $\Delta_{\omega_{0}}$ to this equality to get that
$$
u + R_{\ell} u = G E(u) + G (e^{f_{0}}-1).
$$
Choose any $b$-vector field $V$, i.e., $V \in \calV_b$ and compute that 
$$
Vu  = - V R_{l}u + G V E(u) - [G, V] E(u) + V G(e^{f_0}-1).
$$
Recalling that the error terms $R_\ell$ and $R_r$ are polyhomogeneous on the original (un-blown-up) space $X^2$, and noting also that since 
$f_0$ is polyhomogeneous, so is $V G (e^{f_0} - 1)$, we see that we need only focus on the second term on the right, which we write out as 
$$
G V E = GV \sum_{j=2}^{n} \frac{(n-j)!j!}{n!}\frac{(\omega_{0}^{n-j}\wedge (i\partial \bar \partial u)^{j})}{\omega_{0}^{n}}
$$
Since $i\del\delbar u \in x^\kappa H_{\bfa}^{\infty}(M)$ and $|i\del\delbar u|\leq Cx$, it follows that $E(u) \in x^{2\kappa}H^\infty_{\bfa}$, with
$|E(u)| \leq C x^2$.  Using $G V E(u) = G (x^{2}V)  (x^{-2}E(u))$ and that $x^{2}V \in \calV_{\bfa}$,  this term remains bounded.

The third term is a bit more complicated because of the commutator.  We claim that 
$[G,V] \in \Psi^{2,\calE}_a(M)$. To study this commutator, apply $G \Delta_{\omega_0}$ to the left and $\Delta_{\omega_0} G$ 
to the right. This gives 
$$
\begin{aligned}
(Id + R_l)[G,V] (Id + R_r) & = G \Delta_{\omega_0}[G,V]\Delta_{\omega_0} G\\
& = G[\Delta_{\omega_0}, V] G  + G R_r V \Delta_{\omega_0} G - G \Delta_{\omega_0} V R_l G
\end{aligned}
$$ 
Picking out the commutator term, we have
$$
\begin{aligned}
[G, V] & = G[\Delta_{\omega_0}, V] G  + G R_r V \Delta_{\omega_0} G \\ 
& - G \Delta_{\omega_0} V R_l G - R_l [G,V]- [G,V]R_r - R_l[G,V]R_r.
\end{aligned}
$$
It is straightforward to check that $[\Delta_{\omega_0}, V]\in \operatorname{Diff}^2_a(M)$; this is not entirely obvious since $V$ lies
only in $\calV_b$.  All of the remaining terms are polyhomogeneous. This shows  that the entire right hand side lies in 
$\Psi_\bfa^{-2, \calE}(M)$. Therefore $[G, V] E_0$ is also bounded, and hence the right hand side lives in $x^\kappa L^2$.
Iterating this argument, applying further $b$-vector fields inductively, we obtain that $u \in \calA^{\delta}(M)$. 

We come finally to the last step. Observe that %\eqref{MA1} 
$$
\Delta_{\omega_{0}} u=E(u) + (e^{f_{0}}-1), \ E(u) =\sum_{j=2}^{n} \frac{(n-j)!j!}{n!}\frac{(\omega_{0}^{n-j}\wedge (i\partial \bar \partial u)^{j})}{\omega_{0}^{n}} \in \calA^{2}(M).
$$
The linearized operator for this equation is $L=\Delta_{\omega_{0}}$.  Since $e^{f_{0}}-1 \in \calA_{\phg}^{2}(M)$, we obtain
that $u \in \calA^1_{\phg} + \calA^2$.   Now insert this back into the equation to get, by the same reasoning, that
$u \in \calA^1_{\phg} + \calA^3$.   Iterating this procedure leads to the full expansion.
\end{proof}

\section{Deformations of Tian-Yau metrics}

\subsection{Deformations of hyperK\"ahler triples}

There are various ways to study the local deformation theory of hyperK\"ahler manifolds, and we adopt here the elegant ``hyperK\"ahler triples'' formulation
due to Donaldson, cf.\ \cite{Donaldson}, \cite{Fos}, \cite{SchroersSinger}. . We recall this and then explain how it adapts to the ALH* setting.

A triplet $\ulom = (\omega_1, \omega_2, \omega_3)$ of symplectic $2$-forms on an oriented $4$-manifold $M$  is called a symplectic triple 
if the associated matrix of $4$-forms with entries $\omega_i \wedge \omega_j$ is positive definite.  In other words, relative to any fixed volume 
form $dV$, the symmetric matrix $A$, where $\omega_i \wedge \omega_j = A_{ij}\, dV$, is positive definite.  Any such triple determines another 
volume form $\frac13( \omega_1 \wedge \omega_1 + \omega_2 \wedge \omega_2 +\omega_3 \wedge \omega_3)$, and we say that $\ulom$ 
is a hyperK\"ahler triple if the matrix $A$ relative to this preferred volume form is the identity, i.e., 
\[
Q_{ij} := \omega_i \wedge \omega_j - \frac13( \omega_1 \wedge \omega_1 + \omega_2 \wedge \omega_2 + \omega_3 \wedge \omega_3) \delta_{ij} = 0
\]
for all $i, j$. Invariantly, $Q$ is a map from the space of symplectic triples to $\mathrm{Sym}^2_9(\RR^3) \otimes \Omega^4(M)$, the space
of symmetric trace-free $3$-by-$3$ matrices with entries in $\Omega^4(M)$. 

A hyperK\"ahler triple $\ulom$ uniquely determines a hyperK\"ahler metric $g$ on $X$, see~\cite{SchroersSinger}. Conversely, 
if $g$ is a hyperK\"ahler metric, then the bundle $\Lambda^2_+(M)$ of self-dual 2-forms on $M$ is a parallel subbundle of $\Lambda^2(M)$, and 
any orthonormal parallel frame $\omega_1, \omega_2, \omega_3$ for the space of sections of this subbundle constitutes a hyperK\"ahler triple. 
(Such a parallel frame exists globally if $M$ is simply connected, which will be the case for all spaces considered here.) The ambiguity in this 
correspondence is an element of $\mathrm{SO}(3)$. 

If $\ulom$ is a hyperK\"ahler triple, and if $\uleta$ is a triple of (smooth) closed forms which are small in $\calC^0$, then $\ulom + \uleta$ 
remains a symplectic triple, and is a hyperK\"ahler triple if $Q(\ulom + \uleta) = 0$.  This equation has a lot of redundancy. Indeed, 
\[
Q(\ulom + \uleta, \ulom + \uleta) = Q(\ulom, \ulom) + 2 Q(\uleta, \ulom) + Q(\uleta, \uleta),
\]
where $Q(\ulom, \ulom) = 0$  and the middle term is the polarization
$$
2Q(\uleta, \ulom)_{ij}=\eta_{i}\wedge \omega_{j} + \eta_{j} \wedge \omega_{i}-\frac{1}{3}\sum_{k=1}^{3} (\omega_{k}\wedge \eta_{k}+ \eta_{k}\wedge \omega_{k})\delta_{ij}.
$$
Decompose each $\eta_i$ into self-dual and anti-self-dual parts $\eta_i^+ + \eta_i^-$; since each $\omega_i$ is self-dual, 
$\eta_i \wedge \omega_j = \eta_i^+ \wedge \omega_j$, i.e., the $\eta_i^-$ do not contribute to this wedge product. Now impose the more 
restrictive equations 
%$\calJ(\uleta) = 0$, where 
\begin{equation}
\calJ(\uleta)_{ij} := 2\eta_i^+ \wedge \omega_j  + Q(\uleta, \uleta)_{ij} = 0,\ \ i, j = 1, 2, 3.
\label{gauged}
\end{equation}
Thus if $\uleta$ is a solution, and we write $\eta_{i}^{+}=\sum_{k} b_{ik}\omega_{k}$, then $B = (b_{ij})$ must  be trace-free and symmetric since the
same is true for $Q$. 

We claim that \eqref{gauged} fixes a gauge.  The infinitesimal $\mathrm{SO}(3)$ action on the triple $\ulom$ produces an infinitesimal deformation
$\uleta^+ = B \ulom$ where $B$ is skew-symmetric, while the infinitesimal conformal rescaling yields an $\uleta^+ = \lambda \ulom$, i.e.,
this $B$ is a multiple of the identity. Thus \eqref{gauged} kills both of these families. 

Since $\ulom$ is a basis of self-dual $2$-forms, the nullspace of the linearization 
\[
D\calJ_{ij}|_{\uleta = 0}(\dot{\uleta}) = \dot{\eta}_i^+ \wedge \omega_j ; 
\]
consists of forms with $\dot{\eta}^+_i = 0$, $i = 1, 2, 3$, i.e., $\dot{\uleta}$ is anti-self-dual.  Since $\dot{\eta}$ is closed, and a closed anti-self-dual 
form is harmonic, we conclude that the space of infinitesimal solutions to \eqref{gauged} is just $\mathcal H^2_-$, the space of anti-self-dual harmonic $2$-forms.  

Going further, \eqref{gauged} also fixes a slice transverse the diffeomorphism group.  To see this, recall that any one-parameter family of diffeomorphisms 
is generated by a vector field $V$, and the infinitesimal action on the hyperK\"ahler triple is the Lie derivative, $\calL_V \ulom$. By Cartan's formula and 
the fact that $\ulom$ is closed, $\calL_V \, \ulom=d (\iota_V \ulom)$. We denote the map $V \mapsto \calL_V \ulom = d \iota_V \ulom$ by
\[
L: \calC^\infty(X, TX) \rightarrow \Omega^{2}(X) \otimes \RR^{3}. 
\]
Making the various obvious identifications, this has formal adjoint 
\[
L^{*}: \Omega^{2}(X) \otimes \RR^{3} \rightarrow \calC^\infty(X, TX), \ \uleta\mapsto (\ulJ\cdot (d^{*} \uleta))^{\#},
\]
where $\ulJ = (J_1, J_2, J_3)$ is the hyperK\"ahler family of complex structures and $\ulJ\cdot \uleta=\sum J_{i}\eta_{i}$.   Note that if 
$\uleta^{-}\in \calH_{-}^{2} \otimes \RR^{3}$, then $d^{*}\uleta^{-}=0$ and hence $L^{*}(\uleta^{-})=0$.  Hence, at least when $X$ is compact,  tangent
vectors to the solution manifold $\calJ^{-1}(0)$ are orthogonal to the diffeomorphism orbits.  We examine this later in our noncompact setting.

As just shown, nonzero solutions to $\calJ(\uleta) = 0$ have nonvanishing self-dual components $\eta_i^+$, and by Hodge theory we can write
\[
\eta_i^+ = d^+ a_i + \zeta_i,
\]
where $a_i \in \Omega^1(X)$ and $\zeta_i$ is a self-dual harmonic $2$-form.   Since $Q(\uleta^\pm, \uleta^\mp) = 0$, 
\[
Q(\uleta^+ + \uleta^-, \uleta^+ + \uleta^-) = Q(\uleta^+, \uleta^+) + Q(\uleta^-, \uleta^-),
\]
and thus, regarding $\uleta^-$ as the `input parameter', we seek solutions $\ula$, $\ulzeta$ to
\begin{equation}\label{e:da}
(d^+ a_i + \zeta_i) \wedge \omega_j  = -\frac12 Q_{ij}( d^+ \ula + \ulzeta, d^+ \ula + \ulzeta) - \frac12 Q_{ij}( \uleta^-, \uleta^-).
\end{equation}
We obtain an elliptic equation if we supplement \eqref{e:da} with $d^* a_i = 0$, $i = 1, 2, 3$. Since the left hand 
side of \eqref{e:da} specifies the components of each $d^+ a_i + \zeta_i$ relative to the basis $\ulom$, the entire system can be expressed as
\begin{equation}
D \ula + \ulzeta = -\frac12 \mathcal Q( d^+ \ula + \ulzeta, d^+ \ula + \ulzeta) - \frac12 \mathcal Q( \uleta^-, \uleta^-)
\label{HKTeqn}
\end{equation}
where $D = d^* + d^+: \Omega^1(X) \longrightarrow \Omega^0(X) + \Omega^2_+(X)$. 

Finally, writing $a_i = D^* u_i$ for $u_i \in \Omega^0 + \Omega^2_+$, then \eqref{HKTeqn} takes the form
\begin{equation}
DD^* \ulu +  \ulzeta = F( DD^*\ulu, \, \ulzeta, \, \uleta^-),
\label{fHKTeqn} 
\end{equation} 
where $F$ just represents the nonlinear operator on the right. For hyperK\"ahler metrics, the Weitzenb\"ock formula reduces to $DD^* = \nabla^* \nabla$, 
and using the parallel trivialization
of $\Lambda^2_+$, this reduces even further to four copies of the scalar Laplacian.  We shall write
\begin{equation}\label{eq:ui}
u_{i}=f_{i}^{0} + \sum_{j=1}^{3}f_{i}^{j}\omega_{j} \in \Omega^{0} + \Omega_{+}^{2};
\end{equation}  
each $f_{i}^{j}$ is well-defined up to a harmonic function (which is automatically constant if $X$ is compact.) 

In the compact setting, these calculations are easily justified, and lead immediately to the following well-known result, see~\cite{Donaldson, FLS} for further details.
\begin{proposition}
Let $(X,g)$ be a compact hyperk\"ahler manifold. Then the local hyperk\"ahler deformation theory is unobstructed. A neighborhood of $(X,g)$ in the space of 
all hyperk\"ahler metrics modulo the aforementioned gauge actions is diffeomorphic to a neighborhood of $0$ in $\calH^2_-(X,g) \otimes \RR^3$.
\label{cpctcase}
\end{proposition}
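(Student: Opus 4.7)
The plan is to apply the implicit function theorem to the gauged equation~\eqref{fHKTeqn},
\[
DD^*\ulu + \ulzeta = F(DD^*\ulu,\, \ulzeta,\, \uleta^-),
\]
treating $\uleta^-\in \calH^2_-(X,g)\otimes \RR^3$ as the input parameter and solving for $(\ulu,\ulzeta)$ with $\ulu \in (\Omega^0 \oplus \Omega^2_+)\otimes \RR^3$ and $\ulzeta \in \calH^2_+ \otimes \RR^3$. Because $F$ is quadratic in its arguments, the differential at the origin in $(\ulu,\ulzeta)$ is simply $(\ulu,\ulzeta)\mapsto DD^*\ulu + \ulzeta$. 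By the Weitzenb\"ock identity $DD^* = \nabla^*\nabla$ on sections of $\Omega^0\oplus \Omega^2_+$ and compactness of $X$, this is an elliptic self-adjoint Fredholm operator whose kernel is spanned by the constants (from the $\Omega^0$ factor) together with the self-dual harmonic $2$-forms (from the $\Omega^2_+$ factor).

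First I would restrict $\ulu$ to the $L^2$-orthogonal complement of this kernel and work in suitable Sobolev spaces $H^{k+2}\to H^k$; then the map $(\ulu,\ulzeta)\mapsto DD^*\ulu + \ulzeta$ becomes a topological isomorphism onto its range. A standard Banach-space contraction argument, exploiting the quadratic nature of $F$, then produces for each sufficiently small $\uleta^-\in \calH^2_-(X,g)\otimes \RR^3$ a unique small solution $(\ulu(\uleta^-),\ulzeta(\uleta^-))$ depending smoothly on the parameter and vanishing quadratically as $\uleta^-\to 0$. Assembling $\uleta = d^+ D^*\ulu + \ulzeta + \uleta^-$ then produces a smooth family of closed triples $\ulom + \uleta$ solving $\calJ(\uleta)=0$, hence by~\eqref{gauged} a smooth family of hyperk\"ahler triples deforming $\ulom$, parametrized by $\calH^2_-(X,g)\otimes \RR^3$.

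It remains to show this family is a genuine slice for the $\mathrm{SO}(3)\times \RR^+ \times \Diff(X)$ symmetry group, hence can be identified with a neighborhood of $(X,g)$ in the moduli space. The excerpt already observes that the algebraic gauge~\eqref{gauged} eliminates the infinitesimal $\mathrm{SO}(3)$ and conformal-rescaling directions, and that for $\uleta^-\in \calH^2_-\otimes \RR^3$ the adjoint computation $L^*(\uleta^-) = (\ulJ \cdot d^*\uleta^-)^\sharp = 0$ makes the parameter space $L^2$-orthogonal to the diffeomorphism orbits. Transversality of the full gauge-fixing, and surjectivity onto a neighborhood of $(X,g)$, then follow from a standard open-map argument combined with the fact that the projection $\ulom + \uleta \mapsto \Pi_{\calH^2_-}\uleta$ inverts the above construction to leading order, yielding the claimed local diffeomorphism.

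The one point requiring care, even in this compact setting, is verifying that the algebraic gauge together with the Coulomb ansatz $a_i = D^*u_i$ really do cut out a local slice for the full symmetry group: this amounts to a finite-dimensional transversality computation, trivialized here by Hodge theory, but which will become the principal analytic difficulty in the ALH* setting of later sections, where $DD^*$ fails to be Fredholm without first passing to the weighted function spaces furnished by the $\bfa$-calculus.
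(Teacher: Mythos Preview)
Your proof is correct and follows essentially the same route as the paper: set up the map $(\ulu,\ulzeta,\uleta^-)\mapsto DD^*\ulu + \ulzeta - F$, use that $DD^*$ reduces to copies of the scalar Laplacian via the parallel trivialization of $\Lambda^2_+$, invoke the implicit function theorem, and then verify the slice is transverse to the gauge orbit via $L^*(\uleta^-)=0$. The only cosmetic difference is that the paper works in H\"older spaces $\calC^{2,\alpha}\to\calC^{0,\alpha}$ rather than Sobolev spaces, and quotients the domain by constants rather than restricting to the orthogonal complement of the kernel.
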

\begin{proof}
Consider the map 
\[
\begin{split}
\calK: \left((\calC^{2,\alpha}(X)/\RR)^4 \oplus \calH^{2}(X,g) \right) \otimes \RR^3   \longrightarrow \calC^{0,\alpha}(\Omega^{2}(X))\otimes \RR^3
\end{split}
\]
given by
\[
(\ulu,  \ulzeta, \uleta^-) \longmapsto \calK(\ulu, \ulzeta, \uleta^-)  = DD^*\ulu + \ulzeta - F(DD^*\ulu, \, \ulzeta,\ \uleta^-),
\]
where $(\ulzeta, \uleta^-) \in (\calH^2_+ \oplus \calH^2_-)\otimes \RR^3 = \calH^2 \otimes \RR^3$.
Then
$$
D\calK|_{(0,0,0)}(\underline v, \underline \xi,0) = D D^{*} \underline v + \underline \xi. 
$$
Note too  that $DD^{*}$ decomposes as $12$ copies of the scalar Laplacian.    We see, therefore, that 
\[
D\calK_0 \longrightarrow (\calC^{2,\alpha}(X)/\calH^{0}(X))^4 \oplus( \calH^{2}_+(X,g)\otimes \RR^3
\]
is an isomorphism. Now apply the implicit function theorem to get that the solution space to $\calK = 0$ is locally parametrized by a 
neighborhood of the origin in $(\calH^{2}_-(X,g) \otimes \RR^3)$. 

The preceding discussion shows that the tangent space lies in the kernel of $L^{*}$, hence is orthogonal to the underlying diffeomorphism orbit through
$\ulom$, and is also transverse to the rotation and dilation action on $\Omega^2_+$. Therefore the moduli space is unobstructed and locally diffeomorphic to a 
neighborhood of $0$ in $(\calH^{2}_-(X,g) \otimes \RR^3)$. 
\end{proof}

\subsection{New features in the $ALH^*$ setting}
The development of the deformation theory of hyperK\"ahler structures in the previous subsection is fairly straightforward, but its adaptation 
to our noncompact setting presents some new challenges.  Some of this is relatively direct, using the mapping properties for the Laplacian 
for ALH* spaces described earlier, but there are new somewhat intricate considerations.
We first check that the various `formal' operations, e.g., integration by parts, carry over to this new setting, and with respect to the function 
spaces used here.  The infinitesimal deformations in this ALH* setting arise in two different ways: some occur due to
changes in the `internal moduli', and these decay at infinity and may be analyzed very much as in the compact setting. Others correspond to varying
the nilmanifold structure at infinity.  Both correspond to anti-self-dual harmonic $2$-forms: the former to forms which lie in $L^2$ while
the latter to forms with nonzero limits at infinity.
 
To understand the deformations at infinity, we first consider the family of deformations of $\ulom=(\omega_{1}, \omega_{2}, \omega_{3})$ 
given by 
$$
\tilde \omega_{i}(t)= \sum_{j=1}^{3} A_{ij}(t) \omega_{j}^{+} +\sum_{j=1}^{3} B_{ij}(t) \omega_{j}^{-}, i=1,2,3,
$$
where  $\{\omega_{j}^{\pm}\}_{j=1}^{3}$ are the bases for self-dual and anti-self-dual two-forms at infinity, 
\begin{equation}
\begin{aligned}
\omega_{1}^{\pm}=dr\wedge \theta \pm  r e_{1}\wedge e_{2}, \\
\omega_{2}^{\pm} = e_{1}\wedge \theta \pm  r e_{2}\wedge dr, \\
\omega_{3}^{\pm}= e_{2}\wedge \theta \pm r dr\wedge e_{1}.
\end{aligned}
\label{sdtf}
\end{equation}
(Here $r = 1/x \to \infty$ on the end.) We write $A(t)=(A_{ij}(t)), \ B(t)=(B_{ij}(t))$, where $A(0) = \mathrm{Id}$, $B(0) = 0$. 
To maintain that the $\tilde \omega_i$ be closed, we impose that $B_{i1}(t) \equiv 0$, $i=1,2,3$, which reflects the fact 
that $d\omega_{1}^{-}\neq 0$.

The equation stating that $\underline{\tilde{\omega}}$ is a hyperK\"ahler triple are
$$
\tilde \omega_{i}(t) \wedge \tilde \omega_{j}(t)=\delta_{ij} \frac13 \sum \tilde \omega_{\ell}\wedge \tilde\omega_{\ell}.
%Vol_{\ulom}, \ \mbox{where}\ \lambda(t)Vol_{\ulom} = \frac{1}{3}
$$ 
Since it is convenient to use a fixed volume form $Vol_{\ulom} = \frac{1}{3}\sum \omega_{\ell}^{+} \wedge \omega_{\ell}^{+}$, we define the scalar quantity $\lambda(t)$ by
\[
\frac13 \sum \tilde \omega_{\ell}\wedge \tilde\omega_{\ell} = \lambda(t) \frac{1}{3}\sum \omega_{\ell}^{+} \wedge \omega_{\ell}^{+}.
\]
Expanding the $\tilde{\omega}_i$ as above, this relationship between volume forms yields that
\begin{equation}\label{algebra}
AA^{T} - BB^{T}=\lambda(t) \mathrm{Id}.
\end{equation}
Furthermore, the gauge-fixing condition~\eqref{gauged} becomes 
\begin{equation}
A=A^{T},\ \ \mathrm{tr}(A-\mathrm{Id})=0. 
\label{gf}
\end{equation}
Taking the derivative of~\eqref{algebra} at $t=0$ gives
\begin{equation}
\dot A+\dot A^{T}=\dot \lambda \, \mathrm{Id}.
 \end{equation}
Using \eqref{gf}, we see immediately that 
$$
\dot A=0, \ \ \dot \lambda=0.
$$ 
In other words, if the anti-self-dual input at infinity is trivial, then the hyperK\"ahler triple is fixed up to rotation and overall scaling.  We obtain
one further equation by taking second derivatives:
\begin{equation}
\ddot A+\ddot A^{T} -\dot B\dot B^{T}=\ddot \lambda \, \mathrm{Id}. 
\label{mm}
\end{equation}

Before proceeding, let us record some facts about this parameter space
\begin{equation}
\calP =\{(A,B, \lambda): A^2 - BB^{T}=\lambda \, \mathrm{Id}, \ \mathrm{tr}(A)=3, \ B_{1i}=0, \ i = 1, 2, 3\},
\end{equation}
where $A$ is assumed to be symmetric, but $B$ is not.
\begin{lemma}
The space $\calP$ is a $6$-dimensional manifold in a neighborhood of the point $(\mathrm{Id}, 0)$, with
\[
T_{(\mathrm{Id},0,1)} = \{ \dot B: \dot B_{1i} = 0,\ i = 1, 2, 3\}.
\]
\end{lemma}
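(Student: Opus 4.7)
The plan is to realize $\calP$ as the zero set of a smooth map between finite-dimensional vector spaces and then invoke the implicit function theorem.  Concretely, I would define
\[
F : \mathrm{Sym}^2(\RR^3) \oplus \bigl\{B \in M_3(\RR) : B_{1i} = 0,\ i=1,2,3\bigr\} \oplus \RR \;\longrightarrow\; \mathrm{Sym}^2(\RR^3) \oplus \RR
\]
by $F(A, B, \lambda) := \bigl(A^2 - BB^T - \lambda\, \mathrm{Id},\ \mathrm{tr}(A) - 3\bigr)$, noting that $A^2$ and $BB^T$ are both symmetric when $A = A^T$, so the first component does take values in $\mathrm{Sym}^2(\RR^3)$.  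By construction $\calP = F^{-1}(0)$, and the source and target have dimensions $6 + 6 + 1 = 13$ and $6 + 1 = 7$, respectively.

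The core step is to compute the linearization of $F$ at the basepoint $(\mathrm{Id}, 0, 1)$ and verify its surjectivity.  The $BB^T$ contribution drops out since $B = 0$ there, and a direct calculation yields
\[
dF_{(\mathrm{Id},0,1)}(\dot A, \dot B, \dot \lambda) \;=\; \bigl(2\dot A - \dot \lambda\, \mathrm{Id},\ \mathrm{tr}(\dot A)\bigr).
\]
Given any target $(S, c) \in \mathrm{Sym}^2(\RR^3) \oplus \RR$, the choice $\dot \lambda = (2c - \mathrm{tr}(S))/3$ and $\dot A = (S + \dot \lambda\, \mathrm{Id})/2$ (with $\dot B$ arbitrary) produces a preimage, so $dF$ is surjective.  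The implicit function theorem then gives that $\calP$ is a smooth submanifold of dimension $13 - 7 = 6$ near $(\mathrm{Id}, 0, 1)$, as claimed.

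For the tangent space, $T_{(\mathrm{Id},0,1)}\calP = \ker dF$ consists of those $(\dot A, \dot B, \dot \lambda)$ with $2\dot A = \dot \lambda\, \mathrm{Id}$ and $\mathrm{tr}(\dot A) = 0$.  Taking traces of the first relation gives $2\,\mathrm{tr}(\dot A) = 3\dot \lambda$, which combined with the second forces $\dot \lambda = 0$ and hence $\dot A = 0$.  Only $\dot B$ remains free, subject to the ambient constraint $\dot B_{1i} = 0$, which recovers the description in the lemma.  I do not expect any real obstacle here: the argument is a standard transversality calculation, and the only mild bookkeeping point is that the symmetric-matrix equation $A^2 - BB^T = \lambda\, \mathrm{Id}$ together with the free variable $\lambda$ should be treated as a single constraint valued in $\mathrm{Sym}^2(\RR^3)$, not as independent scalar and trace-free pieces, so that the codimension count $6+1 = 7$ comes out correctly.
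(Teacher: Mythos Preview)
Your proof is correct and follows essentially the same approach as the paper: both realize $\calP$ as the zero set of the map $(A,B,\lambda) \mapsto A^2 - BB^T - \lambda\,\mathrm{Id}$ and apply the implicit function theorem after checking surjectivity of the linearization at $(\mathrm{Id},0,1)$. The only cosmetic difference is that you encode the trace constraint $\mathrm{tr}(A)=3$ as an extra component of the map, whereas the paper builds it directly into the domain; both lead to the same dimension count and the same identification of the tangent space with the free $\dot B$ parameters.
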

\begin{proof}
Writing $\calF(A,B,\lambda) = A^2 - BB^T - \lambda \, \mathrm{Id}$, we calculate that
\[
D\calF_{(\mathrm{Id}, 0, 1)}( \dot A, \dot B, \dot \lambda) = 2 \dot A - \dot \lambda \, \mathrm{Id};
\]
this is clearly surjective onto the space of symmetric $3$-by-$3$ matrices. Since $A$ is a priori symmetric and trace-free, the
nullspace equals the space of $\dot B$ with all $\dot B_{1i} = 0$.  The claim follows from the implicit function theorem.
\end{proof} 

As one further point, the relationship \eqref{mm} exhibits that we can regard $\ddot \lambda$ and the symmetric trace-free matrix
$\ddot A$ as functions of $\dot B$, and as such, 
\[
| \ddot \lambda| + |\ddot A| \leq C | \dot B|^2 \ \ \mbox{as}\ \ |\dot B| \to 0.
\]
\medskip

This lemma suggests that there should be a six-dimensional family of variations of the hyperK\"ahler structure at infinity, the infinitesimal versions
of which fill out the nullspace of $D\calF$.   In fact, as we now show, there are two separate three-dimensional families of variations, the first
corresponding to changes of complex structure in the Calabi Ansatz, and the second to changes of the associated semiflat metrics. 
While these appear to be quite different, we quote a result in \cite[Appendix A]{CJL} which shows that in fact these two families are `the same',
differing only by a family of hyperK\"ahler rotations.  In other words, up to this gauge equivalence, there is really only a three-dimensional family of
fundamentally distinct variations at infinity. 

We now describe the geometric variations corresponding to each of these three-parameter families.

\bigskip

\noindent {\large{\bf Variations of the Calabi ansatz: }}
There are two separate ways to vary the Calabi model, the first by changing the relative scaling of the divisor, which is a one-dimensional complex torus,
and the $S^1$ fiber, and the second by varying the conformal modulus of the torus.  We describe the corresponding infinitesimal parameters for each,
i.e., the associated values of $\ddot{A}$, $\dot{B}$ and $\ddot{\lambda}$

\bigskip

\noindent {\bf Variation by relative scaling:}    First consider the one-parameter family of coordinate rescalings 
\[
\tilde r=a(t) r , \ \ \tilde \theta = b(t) \theta, \ \ \tilde e_{1}=c(t)e_{1}, \ \ \tilde e_{2}=c(t) e_{2}, 
\]
where $a(t)$, $b(t)$ and $c(t)$ are functions which take the value $1$ at $t=0$. Since $e_1$ and $e_2$ are dilated by the same amount,
this amounts to a change of the relative scaling of the torus and $\bS^1$ fiber. The new $2$-forms $\tilde{\omega}_i(t)$ defined 
as in \eqref{sdtf} satisfy $\underline{\tilde{\omega}} = A \ulom^+ + B \ulom^-$ where $A$ and
$B$ are diagonal, with 
\begin{align*}
& A_{11} = \frac12 a (b + c^2),\ A_{22} = A_{33} = \frac12 c (b + a^2), \\
& B_{11} = \frac12 a(b - c^2),\ B_{22} = B_{33} = \frac12 c (b - a^2). 
\end{align*}
The condition $B_{11} = 0$ implies that $b = c^2$, so 
\[
A_{11} = ac^2, \ A_{22} = A_{33} = \frac12 c (a^2 + c^2),\ \ B_{22} = B_{33} = \frac12 c (c^2 - a^2).
\] 

Since $\dot{A}_{11} = 0$, we get that  $(a c^2)\dot{} = 0$, or equivalently, $\dot{a}  = - 2 \dot{c}$, so in particular, $a(t) c(t)^2 = 1 + \calO(t^2)$. 
We may normalize the overall scaling factor by assuming that $a(t) c(t)^2 = 1 - t^2$.   With this, the equation $\mathrm{tr}\, A = 3$ implies that
$c^3 + a^2 c = 2 + t^2$.  However, $ a^2 c^4 = (1-t^2)^2 = (a^2 c) c^3$, so
\[
c^3 + \frac{(1-t^2)^2}{c^3} = 2 + t^2.
\]
Solving this as a quadratic for $c^3$, then taking the cube root, gives finally that
\[
c(t) = \left( 1 + \frac12 t^2 + \frac{t}{2} \sqrt{12 - 3t^2} \right)^{1/3},\ \ a(t) = \frac{1-t^2}{c^{2}(t)}.
\]
 
Replacing $a(t)$ and $c(t)$ by $a(\alpha t)$ and $c(\alpha t)$, respectively, the corresponding infinitesimal variations of $\Lambda^2_\pm$ are then given by 
\begin{equation}\label{eq:calabi1}
\ddot A=
\begin{bmatrix}
-2\alpha^{2} & 0&0\\
0 & \alpha^{2} &0\\
0 & 0 & \alpha^{2}
\end{bmatrix}, \ \ 
\dot B=
\begin{bmatrix}
0&0 & 0\\
0& \sqrt{3}\alpha & 0\\
0&0 & \sqrt{3}\alpha
\end{bmatrix}, \ \ \ \ddot \lambda=-2\alpha^{2}.
\end{equation}

\bigskip

\noindent {\large{\bf Variation of the conformal modulus:} }
This is a two-dimensional family of deformations, which we write as
$$
\tilde e_{1}=a_{0} e_{1}+ c_0 e_{2}, \tilde e_{2}=b_{0}e_{2}+ c_0 e_{1}, \ \ \tilde r=(a_{0}b_{0}-c_0^{2})^{1/2} r, \ \tilde \theta = (a_{0}b_{0}-c_{0}^{2}) \theta.
$$
The matrix on $2$-forms is already symmetric, and the condition on its trace becomes
$$
(a_{0}b_{0}-c_{0}^{2})\left( (a_{0}b_{0}-c_{0}^{2})^{1/2} + a_{0}+b_{0} \right)=3.
$$
The two-dimensions of the family arise by letting these coefficient functions $a_0$, $b_0$ and $c_0$ depend on $\alpha t$ and $\beta t$.  
To simplify the matrices in~\eqref{eq:calabi2} below, it is convenient to choose
$c_{0}=\beta t$ and $a_{0}b_{0}-c_{0}^{2}=(1- \frac{\alpha^{2}+\beta^{2}}{9} t^{2})^{2}$.

Writing $s=3(1- \frac{\alpha^{2}+\beta^{2}}{9} t^{2})^{-2} -(1- \frac{\alpha^{2}+\beta^{2}}{9} t^{2})$, then the trace equation above is
equivalent to $a_{0}+b_{0}=s$, so 
$$
\begin{aligned}
a_{0}=\frac{1}{2}\left(s + \sqrt{s^{2}-4 \left((1- \frac{\alpha^{2}+\beta^{2}}{9} t^{2})^{2}+\beta^{2}t^{2}\right)} \right)\sim 1+\alpha t +\frac{7}{18} (\alpha^{2}+\beta^{2}) t^{2} \\
b_{0}=\frac{1}{2}\left(s - \sqrt{s^{2}-4 \left((1- \frac{\alpha^{2}+\beta^{2}}{9} t^{2})^{2}+\beta^{2}t^{2}\right)} \right) \sim 1-\alpha t +\frac{7}{18} (\alpha^{2}+\beta^{2}) t^{2} .
\end{aligned}
$$

Finally, the associated variations of $\Lambda^2_\pm$ are 
\begin{equation}\label{eq:calabi2}
\begin{aligned}
& \ddot A=
\begin{bmatrix}
-\frac{1}{3}(\alpha^{2}+\beta^{2}) & 0&0\\
0 &\frac{1}{6}(\alpha^{2}+\beta^{2}) &0\\
0 &0&\frac{1}{6} (\alpha^{2}+\beta^{2})
\end{bmatrix}, \\
& \quad \dot B= 
\begin{bmatrix}
0&0 & 0\\
0&\alpha &\beta\\
0&\beta&-\alpha
\end{bmatrix}, \ \ 
 \ddot \lambda=-\frac{2}{3}(\alpha^{2}+\beta^{2}).
 \end{aligned}
\end{equation}

\bigskip

\noindent {\large{\bf Variations of the semiflat model:}  }
We next provide a similar description of the infinitesimal variations of the semiflat model metric.  This section follows closely Appendix A in \cite{CJL}.
The notation in that paper is different than the one we have been using.  The precise correspondence between their variables and the ones used here is given
by the equations
\begin{equation}\label{e:CJLCoordinate}
\xi_{1}=c_{\tau}y_{2}, \ \xi_{2}=c_{\tau}y_{1}, \psi=\theta+\frac{1}{2} y_{1}y_{2}, \ \ell=r
%\xi_{1}=c_{\tau}\frac{y_{2}-y_{1}}{\sqrt{2}}, \ \xi_{2}=c_{\tau}\frac{y_{1}+y_{2}}{\sqrt{2}}, \psi=\theta+\frac{1}{2} y_{1}y_{2}, l=z, 
\end{equation}
where $c_\tau$ is a constant depending only on the conformal modulus of the torus. Using these, the connection $1$-form, which they denote by $\alpha$, 
corresponds to our connection $1$-form via 
%This choice ensures that the connection 1-form %$\alpha$ matches the one in~\cite{CJL}
$$
d\psi + c_{\tau}^{2}\frac{1}{2}(\xi_{2}d\xi_{1}-\xi_{1}d\xi_{2}) = d\theta + y_{1}dy_{2}.
$$
Note that our previous equation \eqref{GH} has a scaling constant $\beta$, which has been absorbed by rescaling $y_1, y_2$ for simplicity. We continue
to use our same coordinates $r,\theta, y_1, y_2$ below.

We now review the recipe in \cite{CJL} for constructing the semiflat metric. To simplify the computations and presentation, specialize to the case of a square torus, i.e., 
set $\tau = 1$; with this choice, the constants $a_\tau$ and $b_\tau$ appearing in that paper take the values $1$ and $0$, respectively. (The more general case 
$\tau\neq i$ proceeds similarly.)

For this torus, the semi-flat metrics in~\cite{CJL} are written using the following four variables: the first two,
\[
X_{1}=\theta, \ X_{2} = r y_{1},
\]
serve as the linear coordinates on the elliptic curve fibers, while 
\[
Y_{1}= r/c_{\tau}, \ Y_{2}=y_{2}/c_{\tau}.
\]
are the radial and angular polar coordinates, respectively, on the base, which is a punctured disk $\Delta^* = \Delta \setminus \{0\}$.   
We call these the semiflat coordinates. The $\bT^2$ divisor in our previous coordinates is parametrized by $X_2$ and $Y_2$, while $X_1$ 
is the angular variable in the normal bundle to the divisor and $Y_1$ is the distance to the divisor. As noted earlier, $c_\tau$ is an explicit constant 
depending on the degree of the $\bS^1$ bundle and the modulus $\tau$. 

Using these new variables, we write three families of deformations of the semiflat metric as follows:
\begin{enumerate}
\item Setting
$$
\tilde \theta= \theta+cr^{2}, \ \tilde y_{1}=y_{1}, \ \tilde y_{2}=y_{2}, \ \tilde r= r, 
$$
then the corresponding semiflat variables are
$$
\tilde X_{1}=X_{1}+cc_{\tau}^{2} Y_{1}^{2}, \ \tilde X_{2}=X_{2}, \ \tilde Y_{1}=Y_{1}, \ \tilde Y_{2}=Y_{2}, 
$$
and after some computation, the matrix variation of the hyperK\"ahler triple equals
$$
A=
\begin{bmatrix}
1 & 0&0\\
0 &1 &-c\\
0 &c&1
\end{bmatrix}, \
B=
\begin{bmatrix}
0&0 & 0\\
0&0 &c\\
0&-c&0
\end{bmatrix}
$$
This matrix $A$ is not symmetric, but can be brought into symmetric form by the hyperK\"ahler rotation
$$
U=\begin{bmatrix}
1 & 0&0\\
0 &\frac{1}{\sqrt{1+c^{2}}} &\frac{c}{\sqrt{1+c^{2}}}\\
0 &\frac{-c}{\sqrt{1+c^{2}}}&\frac{1}{\sqrt{1+c^{2}}}
\end{bmatrix} \in SO(3). 
$$
The equivalent infinitesimal variations in this new gauge, $\tilde A = UA$, $\tilde B = UB$, take the form
\begin{equation}\label{eq:sf1}
\tilde A=
\begin{bmatrix}
1 & 0&0\\
0 & \sqrt{1+c^{2}}&0\\
0 &0& \sqrt{1+c^{2}}
\end{bmatrix}, \
\tilde B=
\begin{bmatrix}
0&0 & 0\\
0&\frac{-c^{2}}{\sqrt{1+c^{2}}} &\frac{c}{\sqrt{1+c^{2}}}\\
0&\frac{-c}{\sqrt{1+c^{2}}}&\frac{-c^{2}}{\sqrt{1+c^{2}}}
\end{bmatrix}
\end{equation}
This variation is an $r$-dependent twist of one variable in the torus fibration, and hence is a (real) one-dimensional variation of the complex structure. 

\item Next, with the change of variables
$$
\tilde \theta = \theta , \tilde y_{1}=y_{1}+c r, \tilde y_{2}=y_{2}, \tilde r= r,
$$
the corresponding semiflat variables become
$$
\tilde X_{1}=X_{1}, \ \tilde X_{2}=X_{2} + cc_{\tau}Y_{1}^{2}, \tilde Y_{1}=Y_{1}, \tilde Y_{2}=Y_{2}.
$$
For this family, the hyperK\"ahler triple varies as
$$
A=
\begin{bmatrix}
1 & -c&0\\
c &1-\frac{c^{2}}{2} &0\\
0&0&1
\end{bmatrix}, \
B=
\begin{bmatrix}
0&c& 0\\
0&\frac{c^{2}}{2}&0\\
0&0&0
\end{bmatrix},
$$
which is brought into `standard' form by applying the hyperK\"ahler rotation 
$$
U=
\begin{bmatrix}
\frac{1-c^{2}/4}{1+c^{2}/4} & \frac{c}{1+c^{2}/4} & 0\\
\frac{-c}{1+c^{2}/4} & \frac{1-c^{2}/4}{1+c^{2}/4} & 0\\
0 & 0 & 1
\end{bmatrix}.
$$
to get the symmetric matrix $\tilde{A} = UA$ and other matrix $\tilde B = UB$: 
This results in the symmetric matrices 
\begin{equation}\label{eq:sf3}
\tilde A=\begin{bmatrix}
\frac{1+3c^{2}/4}{1+c^{2}/4} & \frac{-3c^{3}/4}{1+c^{2}/4} & 0\\
 \frac{-3c^{3}/4}{1+c^{2}/4} & \frac{1+c^{2}/4 + c^{4}/8}{1+c^{2}/4} & 0\\
 0 & 0& 1
\end{bmatrix}, \
\tilde B=\begin{bmatrix}
0 & \frac{c+c^{3}/4}{1+c^{2}/4}& 0\\
0& \frac{-c^{2}/2-c^{3}/8}{1+c^{2}/4} & 0\\
0& 0 & 0
\end{bmatrix}
\end{equation}
This is an $r$-dependent twist of the other variable on the torus, hence is the complementary real one-dimensional variation of complex structure. 

\item Finally, with the new variables
$$
\tilde \theta=\theta, \ \tilde y_{1}=y_{1}, \ \tilde y_{2}=y_{2}+cr, \ \tilde r=r 
$$
and corresponding semiflat variables
$$
\tilde X_{1}=X_{1}, \ \tilde X_{2}=X_{2}, \ \tilde Y_{1}=Y_{1},\  \tilde Y_{2}=Y_{2}+cY_{1}
$$
one obtains the matrix variation of the hyperK\"ahler triple
$$
A=
\begin{bmatrix}
1 & 0&-c\\
0 &1 &0\\
c&0&1-\frac{c^{2}}{2} 
\end{bmatrix}, \
B=
\begin{bmatrix}
0&0& c\\
0&0&0\\
0&0&\frac{c^{2}}{2}
\end{bmatrix}.
$$
Applying a similar hyperK\"ahler rotation as before yields
\begin{equation}~\label{eq:sf2}
\tilde A=\begin{bmatrix}
\frac{1+3c^{2}/4}{1+c^{2}/4} & 0 & \frac{-3c^{3}/4}{1+c^{2}/4}\\
0 & 1 & 0\\
  \frac{-3c^{3}/4}{1+c^{2}/4} & 0& \frac{1+c^{2}/4 + c^{4}/8}{1+c^{2}/4}
\end{bmatrix}, \
\tilde B=\begin{bmatrix}
0 & 0& \frac{c+c^{3}/4}{1+c^{2}/4}\\
0&0&  0\\
0& 0 & \frac{-c^{2}/2-c^{3}/8}{1+c^{2}/4} 
\end{bmatrix}.
\end{equation}
This  real one-dimensional twist changes the complex structure of the punctured disk on the base.
\end{enumerate}

\bigskip

\noindent {\large {\bf Correspondence between these families of deformations:}}  As stated earlier, the three-dimensional family of complex structures for the 
Calabi model is equivalent to the three-dimensional family of semiflat metrics by a family of hyperK\"ahler rotations. This is proved by a somewhat arduous and
mostly explicit computation, carried out in Appendix A of \cite{CJL}. We refer to that paper for details. The interpretation of this equivalence is that, although the 
set of apparent parameters for the space $\calP$ is six-dimensional, there is only a three-dimensional family of deformations up to gauge, i.e., up to hyperK\"ahler rotations. 

We conclude from all of this that there is a three-dimensional manifold which parametrizes the variations of the asymptotic structure of ALH* metrics. 
We represent this three-dimensional manifold by fixing a three-dimensional submanifold $\calS \subset \calP$ transverse to the action by 
hyperK\"ahler rotations on $\calP$. 

\subsection{The extended map and its linearization}
We now define a map $\calK$ which incorporates both the decaying perturbations as well as the ones coming from the variations at infinity. 
The tangent space of its domain is the sum of $L^{2}$ anti-self-dual harmonic forms and the space of matrices $\{\dot B\} = 
T_{(\mathrm{Id}, 0, 1)} \calP$, and it is constructed to have surjective differential. The (local) moduli space of hyperK\"ahler structures itself is then 
identified with the submanifold $\calK = 0$, but where $\dot B$ is restricted to lie in a $3$-dimensional slice of this parameter space 
which is transverse to the hyperK\"ahler rotation action at infinity described above. 

We now introduce notation to make the presentation of $\calK$ tidier. First, define
\[
x^{\epsilon_{1}, \epsilon_{2}, \epsilon_{3}}H_{\XX}^{k}(M):= 
x^{\epsilon_1}\Pi_{y}\Pi_{\theta} H_{\XX}^{k}(M)\oplus x^{\epsilon_2} \Pi_{y}^{\perp}\Pi_{\theta} H_{\XX}^{k}(M) \oplus  x^{\epsilon_3}\Pi_{\theta}^{\perp}H_{\XX}^{k}(M).
\]
Next, abusing (or conflating) notation slightly, identify the $6$-dimensional parameter space $\dot B_{ij}$ with the triplet of anti-self-dual forms 
$\underline{\dot B} = (\dot B_1, \dot B_2, \dot B_3)$ where $\dot B_i = \sum_{j=2}^3 \dot B_{ij} \omega_j^-$, and similarly write $\underline{\ddot A}$ 
with $\ddot A_i = \sum_{j=1}^3 \ddot{A}_{ij} \omega_j^+$.   Continuing on, fix a smooth nonnegative function $\chi(r)$ which equals $1$ 
for $r \geq C_1$ and $0$ for $r \leq C_1 - 1$, for $C_1$ sufficiently large; this will be used to localize various quantities to neighborhoods
of infinity.  Thus, for example, the triplets $\chi(r) \underline{\dot B}$ are the anti-self-dual `inputs' which parametrize the variations
of hyperK\"ahler structure at infinity.  Similarly, fix constants $\gamma_i^j$, $i = 1, 2, 3$, $j = 0, 1, 2, 3$, and define the triplet
\[
\ulw(\gamma) = (w_1, w_2, w_3),\ \ \mbox{where}\ \ w_i = \chi(r) ( \gamma_i^0 + \sum \gamma_i^j \omega_j^+) \in \Omega^0(M) \oplus \Omega^2_+(M).
\]
Finally, write 
\begin{equation}
\operatorname{Dom}_{\epsilon, k}=\{\ulv = \ulu + \ulw(\gamma):   \  \ulu \in  (x^{\epsilon}H^{k}_{\XX}\Omega^0(M)\oplus x^{\epsilon}H^{k}_{\XX}
\Omega^2_+(M))\otimes \RR^3\}.
\end{equation}
%where $\ulw$ is as above with $(\gamma_i^j) \in \RR^{12}$.  

We are now in a position to define the map
\begin{equation}
\begin{split}
\calK:  \left( \left(\operatorname{Dom}_{\epsilon, k+2}  \oplus \calH^{2}_+(X,g) \right. \right. & \left. \left. \oplus \ \calH^{2}_-(X,g) \oplus \RR^4 \right) 
\otimes \RR^3  \right)  \\[0.5ex] 
&  \longrightarrow x^{\epsilon+4, \epsilon+2, \epsilon}H^k_{\bfa}\Omega^{2}(X) \otimes \RR^{3}.
\end{split}
\label{eq:K}
\end{equation}
for any $k > 2$. It is given by 
\[
\calK( \ulv, \ulzeta, \uleta^-, \underline{\dot B}) = DD^*\ulv + \ulzeta + \underline{\ddot A} 
- F(DD^*\ulv + \underline{\ddot A}, \, \ulzeta,\ \uleta^- + \underline{\dot B}) , 
\]
where 
\begin{multline*}
F(DD^*\ulv + \underline{\ddot A}, \, \ulzeta,\ \uleta^-+ \underline{\dot B}) \\
=-\frac12 \mathcal Q( d^{+}D^*\ulv + \underline{\ddot A} + \ulzeta, d^{+}D^*\ulv + \underline{\ddot A} + \ulzeta) 
 - \frac12 \mathcal Q( \uleta^-+\underline{\dot B} , \uleta^-+\underline{\dot B}).
\end{multline*}

%In other words, the input is the sum of a decaying anti-selfdual $2$-form and another set of terms $\ulom_{0}+\sum c_{i} (\dot A_{i} + \dot B_{i})$
%corresponding to a deformation of the structure at infinity, and we solve for a correction $DD^{*}\ulu + \ulzeta$ which decays at infinity. 
%e $\ulom_{0}+d^{+}D^{*}\ulu  + \ulzeta +  \uleta^{-} +\sum c_{i} (\dot A_{i} + \dot B_{i})$ is asymptotic to at infinity which already solves the hyperKahler triple equation by the construction of $A$ and $B$ in the previous section. Hence we are only solving the additional correction term 

Now linearize $\calK$ with respect to only the first two components to get
\begin{equation}\label{eq:dK}
d\calK|_{(0,0,0, 0)}( \dot\ulv, \dot \ulzeta) = D D^{*} \dot\ulv + \dot \ulzeta, \ \ 
\dot\ulv\in \operatorname{Dom}_{\epsilon, k+2}, \ \dot\ulzeta \in \calH^{2}_+(X,g)\otimes \RR^3. 
\end{equation}
Trivializing $\Lambda^2_+$ with respect to the parallel basis $\{\omega_{i}^+\}$ identifies  $DD^{*}$ with 12 copies of the scalar Laplacian $\Delta$. 
As discussed in Section 4, each of these summands decomposes as
$$
\Delta=\Delta_{00}+\Delta_{0\perp} + \Delta_{\perp*},
$$
and there is a corresponding decomposition % $\operatorname{Dom}_{\epsilon, 2}$ has a corresponding decomposition
\begin{align*}
\operatorname{Dom}_{\epsilon, k+2} =\left( 
\{ \ulu + \ulw\right. & \left. : \ \ulu \in x^\epsilon \Pi_{y}\Pi_{\theta}H_{\XX}^{k+2}(M)\} \right.\\
& \left. \oplus x^\epsilon \Pi_{y}^{\perp}\Pi_{\theta}H_{\XX}^{k+2}(M) \oplus  x^\epsilon \Pi_{\theta}^{\perp}H_{\XX}^{k+2}(M)\right) \otimes \RR^{12}.
\end{align*}

The rescaled $b$-operator $\Delta_{00}$ has indicial roots $-1$ and $0$, so $\Delta$ is not surjective when acting on functions
which decay like $x^\epsilon$. Surjectivity is recovered by adding on the terms $\ulw(\gamma)$ which are constant near infinity. 
On the other hand, both $\Delta_{0\perp}$ and $\Delta_{\perp *}$ are fully elliptic, as we showed earlier, and hence 
are isomorphisms on this domain. 

This establishes the %surjectivity of the linearization of $\calK$. 
\begin{proposition}
The linearization
\[
d\calK: \operatorname{Dom}_{\epsilon,k+2} \oplus (\calH^{2}_+(X,g)\otimes \RR^3)  \longrightarrow \left(x^{\epsilon+4, \epsilon+2, \epsilon}H^k_{\bfa}\Omega^{2}(X))\right) \otimes \RR^{3}
\]
is surjective, with kernel isomorphic to $\RR^{12}$.
\end{proposition}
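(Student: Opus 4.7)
The plan is to decouple the linearization into twelve scalar Laplace equations via the parallel trivialization of $\Omega^0 \oplus \Omega^2_+$ and then apply the projector-wise Fredholm theory of Section~4. Since $(M,g)$ is hyperk\"ahler, the Weitzenb\"ock formula yields $DD^* = \nabla^*\nabla$ on $\Omega^0 \oplus \Omega^2_+$; writing
\[
\dot{\ulv} = \sum_{i=1}^{3}\Bigl(\dot f_i^{0} + \sum_{j=1}^{3}\dot f_i^{j}\,\omega_j^{+}\Bigr)\otimes e_i
\]
and using that the $\omega_j^{+}$ are parallel, I would verify that $DD^*\dot{\ulv}$ becomes the scalar Laplacian $\Delta$ applied componentwise to each of the twelve coefficients $\dot f_i^{j}$. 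The linearization $d\calK$ then splits into twelve scalar equations of the form $\Delta\dot f_i^{j} + \dot\zeta_i^{j} = g_i^{j}$.

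Next, I would split each scalar $\Delta$ according to \eqref{e:proj} as $\Delta_{\perp*}+\Delta_{0\perp}+\Delta_{00}$ and invoke the three parametrix propositions of Section~4. The first two reduced operators are fully elliptic in the $\bfa$- and $c$-calculi respectively, hence define isomorphisms between the $\Pi_\theta^\perp$ and $\Pi_y^\perp\Pi_\theta$ pieces of $\operatorname{Dom}_{\epsilon,k+2}$ and the corresponding pieces of the target, at any weight $\epsilon$ avoiding the relevant indicial set. This directly handles surjectivity and triviality of the kernel on those two summands.

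The delicate step is the $\Pi_y\Pi_\theta$ piece, where the reduced operator $\calL_{00}$ has indicial roots $\{0,-1\}$ and is Fredholm but not surjective on $x^{\epsilon}H_b^{k+2}$; its cokernel is precisely the obstruction coming from the prescribable asymptotic constants. The twelve parameters $\gamma_i^{j}$ entering through $\ulw(\gamma)$ are tailored to supply exactly this missing dimension: since $\chi(r)(\gamma_i^{0} + \sum_j \gamma_i^{j}\omega_j^{+})$ is parallel outside a compact set, $DD^*\ulw(\gamma)$ is compactly supported, and varying $\gamma$ sweeps out a $12$-dimensional space of sources that complement the image of $\Delta_{00}$ on the decaying part; the surjection onto the $L^2$ self-dual harmonic part of the target is then furnished by the direct summand $\calH^2_+(X,g)\otimes\RR^3$. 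Combining all three contributions gives surjectivity onto the full target $x^{\epsilon+4,\epsilon+2,\epsilon}H^k_{\bfa}\Omega^{2}(X)\otimes\RR^3$.

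For the kernel, suppose $DD^*\dot{\ulv}=-\dot{\ulzeta}$. Pairing with $\dot{\ulzeta}\in\calH^2_+\otimes\RR^3$, the integration by parts
\[
\|\dot{\ulzeta}\|_{L^2}^{2} = -\langle D^*\dot{\ulv},\,D\dot{\ulzeta}\rangle = 0
\]
is justified because, by Proposition~\ref{prop:poly} and the Hodge theorem of Section~5, $L^2$-harmonic self-dual forms are polyhomogeneous with leading order $x^{2}$, whereas $\dot{\ulv}$ is at worst asymptotic to a constant, so the boundary terms at infinity vanish. Hence $\dot{\ulzeta}=0$ and $DD^*\dot{\ulv}=0$. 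The decaying kernel of $\Delta$ is trivial by a standard energy argument, so the $\Pi_\theta^\perp$ and $\Pi_y^\perp\Pi_\theta$ components of $\dot{\ulv}$ vanish; the remaining kernel comes entirely from the $\ulw(\gamma)$ slot. Given $\gamma\in\RR^{12}$, the surjectivity in Step~3 produces a unique decaying corrector $\dot{\ulu}$ with $DD^*(\dot{\ulu}+\ulw(\gamma))=0$, and distinct values of $\gamma$ yield linearly independent kernel elements by comparing leading asymptotic constants at infinity. This yields $\ker d\calK\cong\RR^{12}$.

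The hardest part will be Step~3: verifying that the compactly supported sources $DD^*\ulw(\gamma)$ are exactly in bijection with the $\Pi_y\Pi_\theta$ cokernel of $\Delta_{00}$ on the precise weighted target space $x^{\epsilon+4}H^{k}_{\bfa}$ employed here, and that together with $\calH^2_+(X,g)\otimes\RR^3$ they exhaust the full cokernel of $DD^*$. This requires careful bookkeeping of indicial contributions at the boundary and a matching between the shift in the target weights $(\epsilon+4,\epsilon+2,\epsilon)$ and the natural image weights of $\Delta$ coming from the Fredholm Theorem~\ref{thm:main}.
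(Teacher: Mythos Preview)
Your proposal follows essentially the same approach as the paper: reduce $DD^*$ to twelve copies of the scalar Laplacian via the parallel trivialization of $\Omega^0\oplus\Omega^2_+$, decompose each copy as $\Delta_{00}+\Delta_{0\perp}+\Delta_{\perp*}$, observe that the latter two are isomorphisms by full ellipticity, and recover surjectivity on the $\Pi_y\Pi_\theta$ piece by adjoining the constants-at-infinity $\ulw(\gamma)$, which also account for the $12$-dimensional kernel. The paper's argument (given in the paragraphs immediately preceding the proposition rather than as a separate proof) is quite terse and does not spell out the integration-by-parts justification that $\dot\ulzeta=0$ in the kernel, nor the precise bookkeeping you flag as ``the hardest part''; your exposition is more detailed on both counts, but the route is the same.
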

%\begin{proof}
%From Theorem~\ref{thm:main} we know that $d\calK$ is Fredholm. The kernel is identified with the part with indicial root 0, which are the 12 dimensional constant functions as the kernel of the scalar Laplacian. Note that from the definition of $u_{i}$ in~\eqref{eq:ui} these harmonic functions should be identified with 0 in the actual deformation.
%\end{proof}

\subsection{Perturbation of Tian--Yau metrics}
\begin{theorem}
If $(M,g)$ is an $ALH^{*}_{b}$ space, then the moduli space of nearby hyperKahler structures modulo hyperK\"ahler rotation has dimension
$3(10-b)$. The deformation parameters lie in an open neighborhood $\calU \times \calV$ where 
$0 \in \calU \subset \calH^{2}_-(M,g) \otimes \RR^3$ and $0 \in \calV \subset \calS \subset \{\dot{B}\}$. 
\end{theorem}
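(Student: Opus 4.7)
The plan is to apply the implicit function theorem to the smooth map $\calK$ of \eqref{eq:K}, and then identify the tangent space to the resulting moduli manifold. Since $\calK$ is a polynomial (quadratic) map between Banach spaces it is automatically smooth, and by the preceding proposition its linearization at the origin in the variables $(\dot\ulv, \dot\ulzeta)$ is surjective with a $12$-dimensional kernel. This kernel is spanned by the scalar and self-dual constants $\gamma_i^j$ built into $\ulw(\gamma)$, and is the direct analogue of the $\calH^0$-quotient that appears in the compact case (Proposition~\ref{cpctcase}). Treating $(\uleta^-, \underline{\dot B})$ as parameters and $(\ulv, \ulzeta)$ as unknowns, the IFT produces, for each sufficiently small $(\uleta^-, \underline{\dot B}) \in (\calH^2_-(M,g) \otimes \RR^3) \oplus T_{(\mathrm{Id}, 0, 1)}\calP$, a unique solution $(\ulv, \ulzeta)$ of $\calK = 0$ modulo this $12$-dimensional normalization kernel.

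Next I would verify that each solution produces a genuine hyperK\"ahler triple $\ulom + \uleta$. The derivation in the previous subsection shows that $\calK = 0$ implements the gauge-fixed equation $\calJ(\uleta) = 0$, which in turn recovers the full equation $Q(\ulom + \uleta) = 0$ once the residual rotation and overall scaling freedom are absorbed. Restricting the asymptotic input $\underline{\dot B}$ to the three-dimensional slice $\calS \subset \calP$ transverse to the hyperK\"ahler rotation action, as constructed in the previous subsection, then removes the remaining $SO(3)$ ambiguity at infinity. I would also check, in parallel with the compact case, that the tangent directions in $\calH^2_-(M,g) \otimes \RR^3$ are $L^*$-orthogonal to the diffeomorphism orbits: for $\uleta^- \in \calH^2_-$ one has $d^*\uleta^- = 0$ and hence $L^*\uleta^- = 0$, and the boundary terms in the integration-by-parts identity vanish by the polyhomogeneous decay of $L^2$ harmonic forms supplied by Propositions~\ref{prop:dirac} and~\ref{prop:poly}. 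This ensures that the resulting moduli space really is locally diffeomorphic to the product $\calU \times \calV$ as stated.

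For the dimension count, the previous two steps identify the moduli space locally with $\calU \times \calV$, hence its dimension equals $3 \dim L^2\calH^2_-(M,g) + \dim \calS = 3 \dim L^2\calH^2_-(M,g) + 3$. Section~5 established that $\dim L^2\calH^2(M,g) = 11-b$ for an $\mathrm{ALH}^*_b$-space. To split this into its self-dual and anti-self-dual parts, I would use the signature of the intersection form on $H^2(M) \cong H^2(X, B)$, where $X$ is the weak del Pezzo compactification of degree $b$; this yields $\dim L^2\calH^2_+(M,g) = 2$ and $\dim L^2\calH^2_-(M,g) = 9-b$. Substituting gives the asserted dimension $3(9-b) + 3 = 3(10-b)$.

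The main obstacle is the last identification of the self-dual and anti-self-dual summands of $L^2\calH^2(M,g)$. In the closed case the three parallel forms $\omega_1,\omega_2,\omega_3$ furnish three self-dual harmonic $L^2$ classes, but on an $\mathrm{ALH}^*_b$-space none of them lies in $L^2$, so the self-dual dimension must be extracted from a signature or $L^2$-index computation that correctly handles the contribution from the nilmanifold end. The fully elliptic structure of the signature operator $D = d + d^*$ established in Proposition~\ref{prop:dirac} should make such an index computation tractable, but the boundary contribution from the ALH* end is the delicate point. A secondary concern is verifying that the IFT applies uniformly in the weighted Sobolev spaces used to define the domain of $\calK$, and that restricting $\underline{\dot B}$ to the slice $\calS$ does not destroy the surjectivity of $d\calK$ established in the previous proposition.
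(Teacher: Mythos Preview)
Your approach is the same as the paper's: apply the implicit function theorem to $\calK$, invoking the surjectivity of $d\calK$ from the preceding proposition, restrict the asymptotic parameter to the slice $\calS$, and then count dimensions as $3\dim\calH^2_-(M,g) + 3$. The paper's argument is terser—it omits the checks you propose on diffeomorphism-transversality and on recovering a genuine hyperK\"ahler triple from $\calK=0$, and it simply asserts $\dim \calH^2_-(ALH^*_b) = 9-b$ rather than deriving it.

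On the obstacle you flag: the paper does not justify $\dim\calH^2_- = 9-b$ from the Section~5 result $\dim L^2\calH^2 = 11-b$ either; it takes this as given, implicitly relying on the period-domain description in \cite{LeeLin} and \cite{hsvz2} (see the remark immediately after the theorem). Your proposed route via the intersection-form signature runs into a difficulty you should be aware of: the Weitzenb\"ock identity $DD^* = \nabla^*\nabla$ on $\Omega^0\oplus\Omega^2_+$, which the paper itself records just before \eqref{eq:ui}, forces any $L^2$ self-dual harmonic $2$-form on a hyperK\"ahler $4$-manifold to be parallel, hence a constant combination of the $\omega_i$, and such a form is not in $L^2$ on an infinite-volume space. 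This suggests $\dim L^2\calH^2_+ = 0$ rather than $2$, so a naive signature computation will not reproduce the asserted $9-b$; the discrepancy between $11-b$ and $9-b$ is not resolved within the paper and must be treated as external input.
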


\begin{proof}
Expanding $\calK$ in a Taylor series, we can write $\calK=d\calK+ Q$, with $Q(\ulv, \ulzeta)$ the quadratically vanishing 
remainder.  We showed above that $d\calK$ is surjective, and has a $12$-dimensional nullspace consisting 
of  bounded harmonic functions, i.e., constants.   We can now apply the implicit function theorem to conclude that
the space of solutions of $\calK = 0$ is parametrized locally  by a product of sets $\calU \times \calV$ as in the
statement of this theorem.

For any given degree $1\leq b\leq 9$ of $\bS^1$ bundle over $\bT^2$, the space $H^{2}_{-}(ALH^{*}_{b})$ is $(9-b)$-dimensional, 
hence the space of decaying perturbations has dimension $\dim\{\uleta^{-}\in \calH^{2}_-(M,g)\otimes \RR^3\}=3(9-b)$.  
The full deformation space has three extra dimensions so that the overall dimension is $3(10-b)$.
\end{proof}
\begin{remark}
The Torelli theorem for $ALH^{*}$ spaces was established in~\cite{CJL3} and~\cite{hsvz2}. The surjectivity on the period domain proved in \cite{LeeLin} suggests 
that the dimension is $3(10-b)$ as well.  The proof above confirms this dimension via the hyperK\"ahler triple formalism. 
\end{remark}

\noindent\textbf{Acknowledgement:} The second author would like to thank Yifan Chen, Daniel Grieser, Yu-Shen Lin, Mark Stern and Song Sun for helpful discussions. The second author is supported by NSF DMS-2305363. 
This material is based upon work supported by the National Science
Foundation under Grant No. DMS-1928930, while the authors were in
residence at the Simons Laufer Mathematical Sciences Institute
(formerly MSRI) in Berkeley, California, during the Fall 2024
semester.

\end{document}